    \newtheoremstyle{upright}%
        {1pt plus1pt minus1pt}%
        {1pt plus1pt minus1pt}%
        {\upshape}%
        {}%
        {\bfseries\scshape}%
        {\textbf{.}}%
        {1em}%
        {}%
\newtheorem{theorem}{Theorem}[section]
\newtheorem{corollary}[theorem]{Corollary}
\newtheorem{lemma}[theorem]{Lemma}
\newtheorem{proposition}[theorem]{Proposition}
\theoremstyle{definition}   
\newtheorem{definition}[theorem]{Definition}
\newtheorem{example}[theorem]{Example}
\newtheorem{notation}[theorem]{Notation}
\newtheorem{remark}[theorem]{Remark}
\newcommand{\R}{\mathbb{R}} 
\newcommand{\Z}{\mathbb{Z}} 
\newcommand{\xoo}{x_0^0} 
\newcommand{\xio}[1]{x_{#1}^0} 
\newcommand{\xii}[2]{x_{#1}^{#2}} 
\newcommand{\billiardOrbit}{\mathscr{O}(x^0,\theta^0)} 
\newcommand{\billiardOrbitang}[1]{\mathscr{O}(x^0,#1)} 
\newcommand{\orbiti}[1]{\mathscr{O}_{#1}(\xio{#1},\theta_{#1}^0)} 
\newcommand{\orbitix}[2]{\mathscr{O}_{#1}(#2,\theta_{#1}^0)} 
\newcommand{\orbitiang}[2]{\mathscr{O}_{#1}(\xio{#1},#2)} 
\newcommand{\orbitixang}[3]{\mathscr{O}_{#1}(#2,#3)} 
\newcommand{\seqang}[1]{\{\mathscr{O}_n(\xio{n},#1)\}_{n=0}^\infty}
\newcommand{\seqi}[1]{\{\mathscr{O}_n(\xio{n},\theta^0)\}_{n=#1}^\infty}
\newcommand{\seqii}[2]{\{\mathscr{O}_{#2}(\xio{#2},\theta^0)\}_{#2=#1}^\infty}
\newcommand{\seqixang}[3]{\{\mathscr{O}_n(#2,#3)\}_{n=#1}^\infty}
\newcommand{\seqiang}[2]{\{\mathscr{O}_n(\xio{n},#2)\}_{n=#1}^\infty}
\newcommand{\ntpi}[1]{\mathscr{N}_{#1}(\xio{#1},\theta^0)}
\newcommand{\ntpixang}[3]{\mathscr{N}_{#1}(#2,#3)}
\newcommand{\ntp}{\mathscr{N}(x^0,\theta^0)}
\newcommand{\ntpang}[1]{\mathscr{N}(x^0,#1)}
\newcommand{\tfractal}{\mathscr{T}}
\newcommand{\tfraci}[1]{\mathscr{T}_{#1}}
\newcommand{\omegat}{\Omega(\mathscr{T})}
\newcommand{\omegati}[1]{\Omega(\mathscr{T}_{#1})}
\begin{document}

\title[Nontrivial paths and periodic orbits]{Nontrivial paths and periodic orbits of the $T$-fractal billiard table.}

\author[M. L. Lapidus]{Michel L. Lapidus}
\address{University of California, Department of Mathematics, 900 Big Springs Rd., Riverside, CA  92521-0135, USA}
\email{lapidus@math.ucr.edu}
\thanks{The work of M. L. Lapidus was partially supported by the National Science Foundation under the research grants DMS-0707524 and DMS-1107750, as well as by the Institut des Hautes Etudes Scientifiques (IHES) in Bures-sur-Yvette, France, where he was a visiting professor while part of this paper was written. The work of R. G. Niemeyer was partially supported by the National Science Foundation under the MCTP grant DMS-1148801, while a postdoctoral fellow at the University of New Mexico, Albuquerque.}
\author[R. L. Miller]{Robyn L. Miller}
\address{The Mind Research Network, Albuquerque, NM 87106, USA}
\email{rmiller@mrn.org}

\author[R. G. Niemeyer]{Robert G. Niemeyer}
\address{University of New Mexico, Department of Mathematics and Statistics, 311 Terrace NE, Albuquerque, NM  87131-0001, USA}
\email{niemeyer@math.unm.edu}

\keywords{fractal billiard, polygonal billiard, rational (polygonal) billiard, law of reflection, unfolding process, flat surface, translation surface, geodesic flow, billiard flow, iterated function system and attractor, fractal, prefractal approximations, $T$-fractal billiard, prefractal rational billiard approximations, sequence of compatible orbits, (eventually) constant sequences of compatible orbits, footprints, Cantor points, smooth points, elusive points, periodic orbits, periodic vs. dense orbits.}
\subjclass[2010]{Primary: 28A80, 37D40, 37D50, Secondary: 28A75, 37C27, 37E35, 37F40, 58J99.}

\begin{abstract}
We introduce and prove numerous new results about the orbits of the $T$-fractal billiard.  Specifically, in \S\ref{sec:SequencesOfCompatiblePeriodicOrbits}, we give a variety of sufficient conditions for the existence of a sequence of compatible periodic orbits.   In \S\ref{sec:NontrivialPathsInTheTFractalBilliardTable}, we examine the limiting behavior of particular sequences of compatible periodic orbits. Additionally, sufficient conditions for the existence of particular nontrivial paths are given in \S\ref{sec:NontrivialPathsInTheTFractalBilliardTable}. The proofs of two results stated in \cite{LapNie4} appear here for the first time, as well.  In \S\ref{sec:aNontrivialPathInAnIrrationalDirection}, an orbit with an irrational initial direction reaches an elusive point in a way that yields a nontrivial path of finite length, yet, by our convention, constitutes a singular orbit of the fractal billiard table.    The existence of such an orbit seems to indicate that the classification of orbits may not be so straightforward.  A discussion of our results and directions for future research is then given in \S\ref{sec:Discussion}.
\end{abstract}

\maketitle
\setcounter{tocdepth}{2}
\tableofcontents

\section{Introduction}
\label{sec:introduction}
A fractal billiard table is a planar billiard table $\Omega(F)$ where the boundary $\partial\Omega(F)=F$ is a fractal curve.  In this paper, we take $\Omega(F)$ to be the $T$-fractal billiard table shown in Figure \ref{fig:tfractal}.  In \cite{CheNie1}, the fractal billiard table is a self-similar Sierpinski carpet billiard table.  In \cite{LapNie1,LapNie2,LapNie3}, the fractal billiard table under consideration was the Koch snowflake fractal billiard table.  In \cite{LapNie4}, recent results on the Koch snowflake fractal billiard table and a self-similar Sierpinski fractal billiard table are surveyed and the $T$-fractal billiard table is introduced; preliminary results regarding the $T$-fractal billiard table were presented without proof.  The $T$-fractal is not, strictly speaking, a self-similar set nor is it the finite union of self-similar sets.  However, as we demonstrate at the end of this introduction, $\omegat$ can be constructed by way of a particular iterated function system whose contraction mapping is denoted by $\Phi$.  We illustrate in Figure \ref{fig:tfractal} how to construct the $T$-fractal billiard.

We mention that the $T$-fractal is \textit{not} the unique fixed point attractor of $\Phi$, this being a technical detail that does not take away from the fractality of the set.  Rather, the unique fixed point attractor of $\Phi$ is of great interest and is hereafter referred to as the set of elusive points of the fractal billiard table; such points are those that are never found in any finite approximation and a formal definition of \textit{elusive point} is given in Definition \ref{def:ElusivePoint}, with further elaboration given in Remark \ref{rmk:elusivePointsNotAlwaysAttractor}.  The geometry of $\omegat$ will aide us in our analysis of nontrivial paths (in the sense of Definition \ref{def:ANontrivialPath}) and periodic orbits of the $T$-fractal billiard table.

\begin{figure}
\begin{center}
\includegraphics[scale=.65]{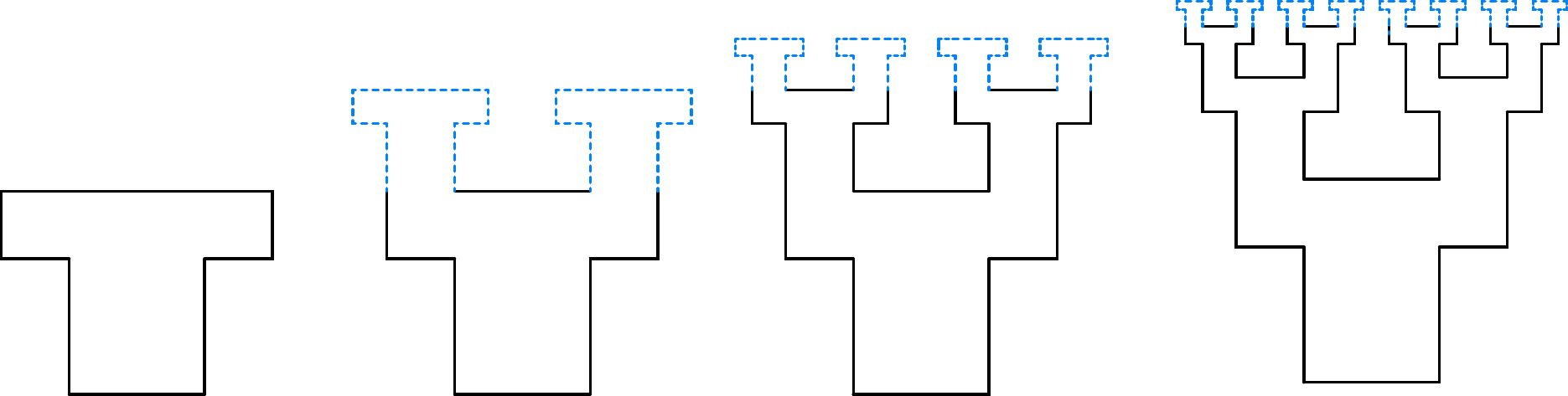}
\end{center}
\caption{The construction of the T-fractal billiard table.  Note that, at each level $n$, $n\geq 0$, $2^{n+1}$ copies of the base $T$ (shown on the left) are appended, as shown in blue, (or, for those not viewing the color version of this article, as dotted segments) so as to construct the $n+1$ approximation.}
\label{fig:tfractal}
\end{figure}

The $T$-fractal enjoys particular properties not found in the Koch snowflake and  a self-similar Sierpinski carpet.  Indeed, the Koch snowflake $K\!S$ is a nowhere differentiable curve bounding a region with finite area.  Furthermore, a self-similar Sierpinski carpet contains no area.  By contrast, the T-fractal bounds a region with finite area, yet a nontrivial portion of the boundary remains differentiable.  Hence, determining directions for which one gets periodic orbits is a much easier task compared to finding such orbits in the Koch snowflake fractal billiard table.  Additionally, the set of directions for which one finds periodic orbits of the $T$-fractal billiard table is countably infinite, this being in contrast to a self-similar Sierpinski carpet billiard table, where the number of directions for which periodic orbits occur is finite (this finite value depending exclusively on the scaling ratio of the carpet).  We note that Propositions \ref{thm:WhatTheSlopeCannotBeInTFrac} and \ref{thm:WhatTheSlopeCanBeInTFrac} below first appeared without proof and were stated under more restrictive conditions in \cite{LapNie4}; in this paper, we provide detailed proofs of these more general propositions, among many other results.

The overarching themes of the research program on fractal billiards and fractal flat surfaces are topological and measure-theoretic in nature.  In one instance, we seek to understand what constitutes a periodic orbit of a fractal billiard table.  In another instance, we seek to understand when an orbit may be dense in a fractal billiard table.  More generally, we eventually seek to answer the question asking which directions yield flows that are closed and which yield  flows that are ergodic (i.e., where almost every orbit is uniformly distributed in the fractal billiard table).  

Of course, all of this relies on the existence of a well-defined billiard map and phase space.  Defining such a map and space for a fractal billiard table is a very difficult task, one that the authors have yet to accomplish.  In order to work towards the abovementioned goals in the absence of a billiard map and phase space, we rely on the geometric properties of the fractal to construct what we are calling nontrivial paths of a fractal billiard table.  We are ultimately interested in nontrivial paths reaching elusive points\footnote{An elusive point is never a point of any finite approximation and is formally defined in Definition \ref{def:ElusivePoint}.} of a fractal billiard table.  In the case of the $T$-fractal billiard (and, likewise, the Koch snowflake fractal billiard table), any pointmass on a trajectory that results in the pointmass reaching an elusive point does so in an increasingly confined manner.  It is then natural to ask how a pointmass will ``reflect'' off from an elusive point and under what conditions will such an orbit

\begin{enumerate}
	\item form a periodic orbit of the fractal billiard table (i.e., return to the elusive point infinitely often in the same direction),
	\item terminate at an elusive point, because of some sort of inherent ambiguity,
	\item or manage to fill all or part of some nontrivial 2-dimensional subset of $\omegat$ (either uniformly or not).
\end{enumerate}

We answer in part these questions by giving a variety of sufficient conditions under which 1) a periodic orbit of $\omegat$ exists and, more generally, 2) a nontrivial path\footnote{A path that manages to never be confined to any finite approximation of $\omegat$.}  of $\omegat$ exists; see Definition \ref{def:ANontrivialPath}.  In the case of the $T$-fractal billiard (and, also, the Koch snowflake fractal billiard), there are trajectories yielding orbits that reach elusive points.  The geometric properties of the $T$-fractal billiard lend themselves well to the description of such trajectories, this being the major focus of \S\ref{sec:SequencesOfCompatiblePeriodicOrbits} and \S\ref{sec:NontrivialPathsInTheTFractalBilliardTable}.

The paper is organized as follows.  In \S\ref{sec:background}, the necessary background is given so that a reader not familiar with mathematical billiards or the terminology developed in earlier works may be able to understand the remainder of the paper.  Sufficient conditions for sequences of compatible periodic orbits are given in \S\ref{sec:SequencesOfCompatiblePeriodicOrbits}.  In \S\ref{sec:NontrivialPathsInTheTFractalBilliardTable}, we build upon the concepts developed in \S\ref{sec:SequencesOfCompatiblePeriodicOrbits} and show (via an explicit construction) that there exist nontrivial paths and periodic orbits in the $T$-fractal billiard $\omegat$.  Then, in \S\ref{sec:aNontrivialPathInAnIrrationalDirection}, we introduce an important example of a nontrivial path with an initial direction that is irrational, yet reaches an elusive point of the $T$-fractal billiard in a way that is similar to how particular nontrivial paths with rational initial directions reach their respective elusive points.  This is an unintuitive behavior that begins to depart from the classical theory of billiards on square tiled billiard tables.  Finally, in \S\ref{sec:Discussion}, we discuss some of our results and propose directions for future research.

We close this introduction by giving a more detailed geometric description of the $T$-fractal billiard.  As we mentioned in the preceding text, one can construct the T-fractal billiard by way of a particular iterated function system $\Phi = \{\phi_1,\phi_2\}$ given by
\begin{align}
 \phi_1(\mathbf{x}) = \frac{1}{2}\mathbf{x} + \left(1,\frac{3}{2}\right) & \quad
\phi_2(\mathbf{x}) = \frac{1}{2}\mathbf{x} + \left(\frac{-1}{2},\frac{3}{2}\right).
\label{eqn:IFS}
\end{align}
Let $\omegati{0}$ be a $1\times 1$ square unioned with a $2\times 1/2$ rectangle, as shown in Figure \ref{fig:T0}.  According to the convention which we have used in our exact arithmetic simulations, the lower left corner of the $1\times 1$ square is the origin $(0,0)$.  In this paper, the choice for the location of the origin is arbitrary.  However, in current works in progress, it is convenient, if not necessary, to require that the base of $\omegat$ be exactly the unit interval $[0,1]$.\footnote{Current works in progress with C. C. Johnson focus on developing a fractal interval exchange transformation on the $T$-fractal flat surface and investigating the behavior of a particular orbit of the $T$-fractal billiard table; see \cite{JohNie1,JohNie2}, respectively.  So as to maintain consistency across articles, we designate the origin as indicated in Figure \ref{fig:T0}.} If $\omegati{n}:= \bigcup_{j=0}^n\Phi^j(\omegati{0})$, where 
\begin{align}
\Phi^j(\cdot):&=\bigcup^2_{i_1,\cdots, i_j=1} \phi_{i_1}\circ\phi_{i_1}\circ\cdots\circ\phi_{i_j}(\cdot),
\end{align}
\noindent then we define $\omegat$ by $\overline{\bigcup_{n=0}^\infty \omegati{n}}$.  

\begin{figure}
	\begin{center}
	\includegraphics{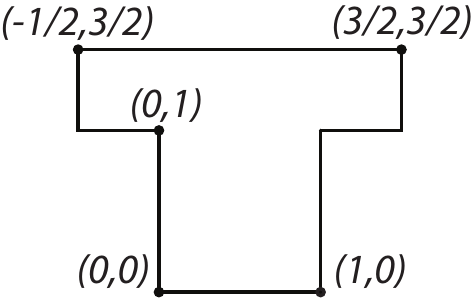}
	\end{center}
	\caption{The geometry of the  set $\omegati{0}$.}
	\label{fig:T0}  
\end{figure}

As we previously mentioned, the set of elusive points, which we denote by $\mathscr{E}$, is the unique fixed point attractor of $\Phi$.  Since $\Phi([-1,2]\times \{3\}) = [-1,2]\times \{3\}$ and the attractor of $\Phi$ is unique, we see that $\mathscr{E} = [-1,2]\times \{3\}$.
\section{Background}

\label{sec:background}
For the reader's easy reference, we provide in this section various definitions appearing in previous joint works; see \cite{CheNie1, LapNie1, LapNie2, LapNie3, LapNie4}.  However, the notation in all of the definitions will reflect the fact that we are discussing the $T$-fractal billiard table, which will significantly simplify the exposition.  Particular definitions (e.g., the definitions for \textit{compatible initial conditions} and \textit{a sequence of compatible initial conditions}) will be phrased in full generality so that the reader may be able to understand the results appearing in the abovementioned works.


In order to understand the following notations and definitions, we first discuss the phase space for the billiard dynamics on a polygonal billiard table. If $\Omega(B)$ is a polygonal billiard with boundary $B$, then we consider the Cartesian product $\Omega(B)\times S^1$, where $S^1$ is the unit	 circle in $\mathbb{R}^2$.  As a subset of the tangent bundle on $\mathbb{R}^2$, the flow on $\Omega(B)\times S^1$ is only partially defined, since any trajectory on $\Omega(B)$ would intersect with $B$.  For points of $\Omega(B)$ that are not vertices, one can define an equivalence relation $\sim$ on $\Omega(B)\times S^1$ that identifies outward pointing vectors with inward pointing vectors so that the flow on $\Omega(B)\times S^1$  can be continued in a continuous manner at the boundary.  That is, if $\rho_i$ is the reflection through the edge $e_i$ of $B$, then for a point $x$ on $e_i$ which is not a vertex of $B$, we write that $(x,\theta)\sim (x,\rho_i(\theta))$; for points in the interior of $\Omega(B)$, $(x,\theta)$ is equivalent to itself and to no other ordered pair. 

However, for points of $B$ that are vertices, a geodesic cannot always be continued in a well-defined manner.  A priori, one terminates the geodesic when it intersects a vertex of $B$.  More precisely, if $x$ is a vertex of $B$, then $(x,\theta)\sim (x,\rho(\theta))$ for every $\rho$ in the group generated by $\rho_i$ and $\rho_j$, the reflections through the edges $e_i$ and $e_j$, respectively.  As one may then see, in certain situations, one can show that a geodesic intersecting a vertex of $B$ can be continued in a well-defined manner.  This occurs when there is only one $\rho'$ in the group generated by the elements $\rho_i$ and $\rho_j$ such that $\rho'(\theta)$ is an inward pointing direction at the vertex $x$.  Thus, one can define the quotient space $(\Omega(B)\times S^1)/\!\sim$ to be the tangent bundle of $\Omega(B)$.  

 In the context of a prefractal approximation $\omegati{n}$ of the $T$-fractal billiard, this is when a billiard orbit intersects a right angle of $\omegati{n}$ (when one measures the angle from the interior of $\omegati{n}$).  When a billiard orbit intersects a vertex with an obtuse angle (still when measured from the interior of $\omegati{n}$), one cannot determine the continuation of the geodesic in a well-defined manner.  Hence, the geodesic terminates.

In general, one restricts one's attention to $(B\times S^1)/\!\sim$.  When defined, the map $f_B\!:(B\times S^1)/\!\sim \,\,\rightarrow (B\times S^1)/\!\sim$ is a map on the set of  equivalence classes.  The representative element of an equivalence class $[(x,\theta)]$ is the element $(x,\theta)$, where $\theta$ is the inward pointing direction based at $x$.  As such, one simplifies notation slightly by considering $f_B$ (again, when defined) as a map on the representative elements; i.e., $f_B(x,\theta) = (x',\rho_i(\theta'))$, where $\rho_i(\theta')$ is the reflection of the angle $\theta'$ through the side $e_i$ containing $x'$.  The map $f_B$ is called the \textit{billiard map} (of $\Omega(B)$).

\begin{notation}
An initial condition of an orbit of a billiard table $\Omega(B)$ is given by $(x^0,\theta^0)$, where $x^0$ is the initial basepoint on the boundary $B$ and $\theta^0$ is the initial inward pointing direction; see Figure \ref{fig:billiardMap}.  In the $T$-fractal billiard table approximation $\omegati{n}$, the initial condition of an orbit will be given by $(\xio{n},\theta^0_n)$.  If $f_B$ is the billiard map\footnote{See \cite{Sm} and \S2 of \cite{LapNie4} for a detailed discussion of the billiard map $f_B$ and the phase space $(B\times S^1)/\!\sim$, including the equivalence relation $\sim$.} describing the flow in the phase space $(B\times S^1)/\!\sim$, then $f_B^k(x^0,\theta^0) = (x^k,\theta^k)$, the ($k+1$)th point-angle pair in the orbit $\mathscr{O}_B(x^0,\theta^0)$.  If we are considering an approximation of the $T$-fractal billiard table, then an orbit of $\omegati{n}$ is given by $\orbiti{n}$ and $f_n^{k}(\xio{n},\theta^0_n) = (\xii{n}{k},\theta^{k}_n)$ is the ($k+1$)th point-angle pair in the orbit of $\omegati{n}$.\footnote{We stress that $k$ depends on $n$; however, it is unnecessary to indicate so in the notation.  Later, we will introduce the notion of \textit{first return time} and \textit{first escape time} of an orbit of a prefractal billiard $\omegati{n}$, and the explicit dependence on $n$ in the notation for such notions will be clearly indicated.}
\label{not:orbitNotation}
\end{notation}

\begin{figure}
\begin{center}
\includegraphics{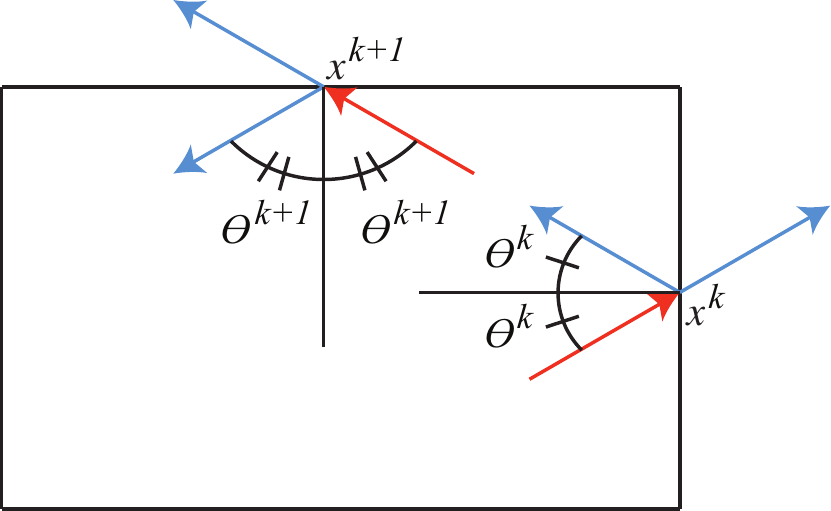}
\end{center}
\caption{One always takes the direction of motion to be the inward pointing vector, this being identified with the outward pointing vector by way of the equivalence relation $\sim$ defined in the text prior to Notation \ref{not:orbitNotation}.  Here, we see a billiard ball beginning at $x^k$ and then going in the direction of $\theta^k$.  Upon collision at the point $x^{k+1}$, the angle of reflection equals the angle of incidence and the billiard ball continues.}
\label{fig:billiardMap}
\end{figure}


\begin{definition}[Elusive point]
\label{def:ElusivePoint}
Let $\omegat$ be the $T$-fractal billiard table approximated by a sequence of rational polygonal billiard tables $\omegati{n}$ (as shown in Figure \ref{fig:tfractal}), where  $\omegati{n}\subseteq\omegat$ for every $n\geq 0$.  Then 
\begin{align}
\notag\omegat \setminus&\bigcup_{n=0}^\infty\omegati{n}	
\end{align}
\noindent is the collection of all \textit{elusive points} of $\omegat$.  We denote the set of elusive points of $\omegat$ by $\mathscr{E}$.
\end{definition}

\begin{remark}
\label{rmk:elusivePointsNotAlwaysAttractor}
As we noted in the introduction, the set of elusive points $\mathscr{E}$ of the $T$-fractal billiard is the attractor of the iterated function system $\Phi$ introduced at the end of \S\ref{sec:introduction}.  While Definition \ref{def:ElusivePoint} remains valid for the Koch snowflake fractal billiard, for example, the set of elusive points for the Koch snowflake fractal will not be closed, nor will it be the unique fixed point attractor associated with the Koch snowflake fractal billiard.  The fact that the set of elusive points $\mathscr{E}$ of the $T$-fractal billiard is a closed set in the plane is specific to the $T$-fractal billiard and not a property that we will generally observe when investigating the billiard dynamics on other fractal billiard tables.
\end{remark}

Being an interval in the plane, the set $\mathscr{E}$, though stretched, is a copy of the unit interval $[0,1]$.  As a result, an elusive point can be given an address in terms of $L$'s and $R$'s.  Geometrically, this is motivated by the fact that with each iteration of $\omegati{n}$, one adds to each scaled copy of $\omegati{0}$ two smaller copies of $\omegati{0}$ of scale $2^{-n-1}$, a left copy and a right copy; see Figure \ref{fig:addressingElusivePoint}.  We want to distinguish between what we call \textit{rational} and \textit{irrational} elusive points.  Let $x\in\mathscr{E}$.  Then $x$ is called a \textit{rational elusive point} if the address for $x$ is a preperiodic sequence of $L$'s and $R$'s.  Otherwise, $x$ is called an \textit{irrational elusive point}; see Figure \ref{fig:addressingElusivePoint}.

\begin{figure}
\begin{center}
\includegraphics[scale=.7]{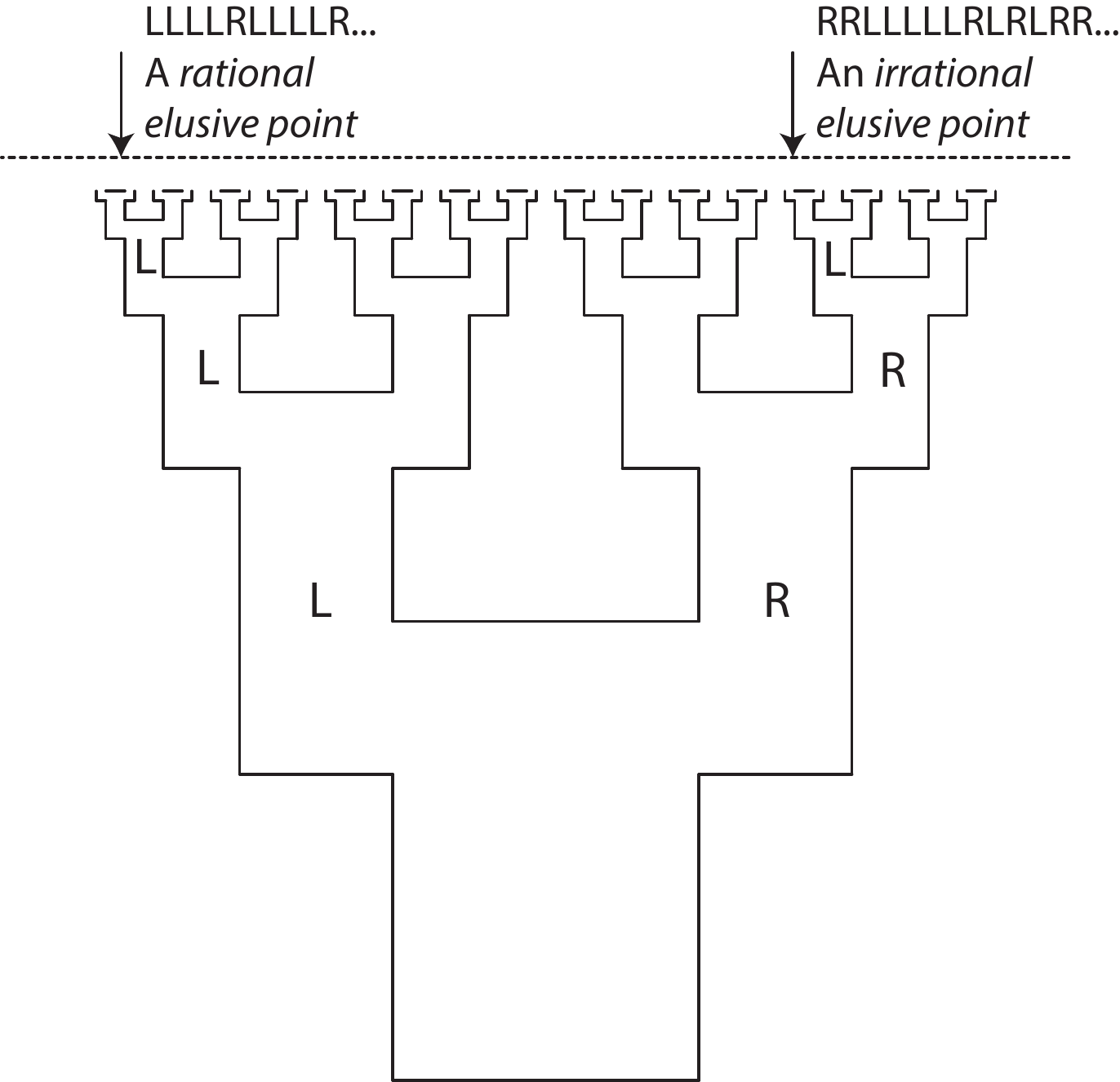}
\end{center}

\caption{An example of how to address a rational elusive point and an irrational elusive point of $\omegat$.  The preperiodic address of the rational elusive point (left) is $\overline{LLLLR}$, whereas the address for the irrational elusive point (right) will not follow any pattern at all.}
\label{fig:addressingElusivePoint}
\end{figure}
\begin{definition}[Compatible initial conditions]
\label{def:compatibleInitialConditions}
Without loss of generality, suppose that $n$ and $m$ are nonnegative integers such that $n > m$. Let $(\xio{n},\theta_n^0)\in (\tfraci{n}\times S^1)/\!\sim$ and $(\xio{m},\theta_m^0)\in (\tfraci{m}\times S^1)/\!\sim$ be two initial conditions of the orbits $\orbiti{n}$ and $\orbiti{m}$, respectively, where we are assuming that $\theta_n^0$ and $\theta_m^0$ are both inward pointing.  If $\theta_n^0 = \theta_m^0$ and if $\xio{n}$ and $\xio{m}$ lie on a segment determined from $\theta_n^0$ (or $\theta_m^0$) which intersects $\omegati{n}$ only at $\xio{n}$, then we say that $(\xio{n},\theta_n^0)$ and $(\xio{m},\theta_m^0)$ are \textit{compatible initial conditions}; see Figures \ref{fig:compInitCondTFractal} and \ref{fig:altCompSeqInitCond}.
\end{definition}

\begin{remark}
When two initial conditions $(\xio{n},\theta_n^0)$ and $(\xio{m},\theta_m^0)$ are compatible, then we simply write them as $(\xio{n},\theta^0)$ and $(\xio{m},\theta^0)$, respectively. If two orbits $\orbiti{m}$ and $\orbiti{n}$ have compatible initial conditions, then we say that such orbits are \textit{compatible}.  Consequently, two compatible orbits $\orbiti{m}$ and $\orbiti{n}$ will now often be written as $\orbitiang{m}{\theta^0}$ and $\orbitiang{n}{\theta^0}$, respectively.  
\end{remark}


\begin{definition}[Sequence of compatible initial conditions]
\label{def:sequenceOfCompatibleInitialConditions}
Let $\{(\xio{n},\theta_n^0)\}_{n=i}^\infty$ be a sequence of initial conditions, for some nonnegative integer $i$.  We say that this sequence is a \textit{sequence of compatible initial conditions} if for every $m\geq i$ and for every $n> m$, we have that $(\xio{n},\theta_n^0)$ and $(\xio{m},\theta_m^0)$ are compatible initial conditions.  In such a case, we then write the sequence as $\{(\xio{n},\theta^0)\}_{n=i}^\infty$.
\end{definition}

In this article, $\xio{n}$ will never be on a segment of $\tfraci{n}$ to be removed in the construction of $\omegati{n+1}$.  Hence, there exists a nonnegative integer $i$ such that $\xio{n} = \xio{i}$ for all $n\geq i$ as a point in the plane.\footnote{We note that for $m\neq n$,  $\xio{n}$ and $\xio{m}$ really lie in two different spaces.  When we say that they are equal, we are actually implying that some embeddings of $\xio{n}$ and $\xio{m}$ into the plane are equal. This is a technical detail that will not cause any problems, but is worth mentioning.} See Figure \ref{fig:T-FractalSequenceOfCompatibleOrbits} for an example of a sequence of compatible orbits.

As was alluded to at the beginning of this section, Definitions \ref{def:compatibleInitialConditions} and \ref{def:sequenceOfCompatibleInitialConditions} were stated in full generality so that the reader may understand the results in the aforementioned articles, should they refer back to them for  examples of periodic orbits and nontrivial paths of other fractal billiard tables (namely, the Koch snowflake fractal billiard and a self-similar Sierpinski carpet fractal billiard table studied, respectively, in [LapNie1--5], \cite{CheNie1}, \cite{LapNie4}).  As we see in Figure \ref{fig:compInitCondTFractal}, it may be that two initial conditions are compatible and $x^0_0 = x^0_n$ for all $n\geq 0$.  Also, it is possible, as shown in Figure \ref{fig:altCompSeqInitCond}, that $\xio{n}\neq \xio{n+1}$ for, in this case, $n=1$, thereby necessitating a more general definition.\footnote{As we mentioned, however, we will always assume that $\xio{i}$ is not on a segment to be removed in the construction of $\omegati{i+1}$ from $\omegati{i}$.}  In the case of the Koch snowflake, we give an example in Figure \ref{fig:kochSnowflakeExample} that demonstrates the necessity for the full generality of Definitions \ref{def:compatibleInitialConditions} and \ref{def:sequenceOfCompatibleInitialConditions}.

\begin{definition}[Sequence of compatible orbits]
\label{def:SequenceOfCompatibleOrbits}
Consider a sequence of compatible initial conditions $\{(\xio{n},\theta^0)\}_{n=i}^\infty$.  Then the corresponding sequence of orbits $\seqi{i}$ is called \textit{a sequence of compatible orbits}.
\end{definition}
\begin{figure}
\begin{center}
\includegraphics[scale=.30]{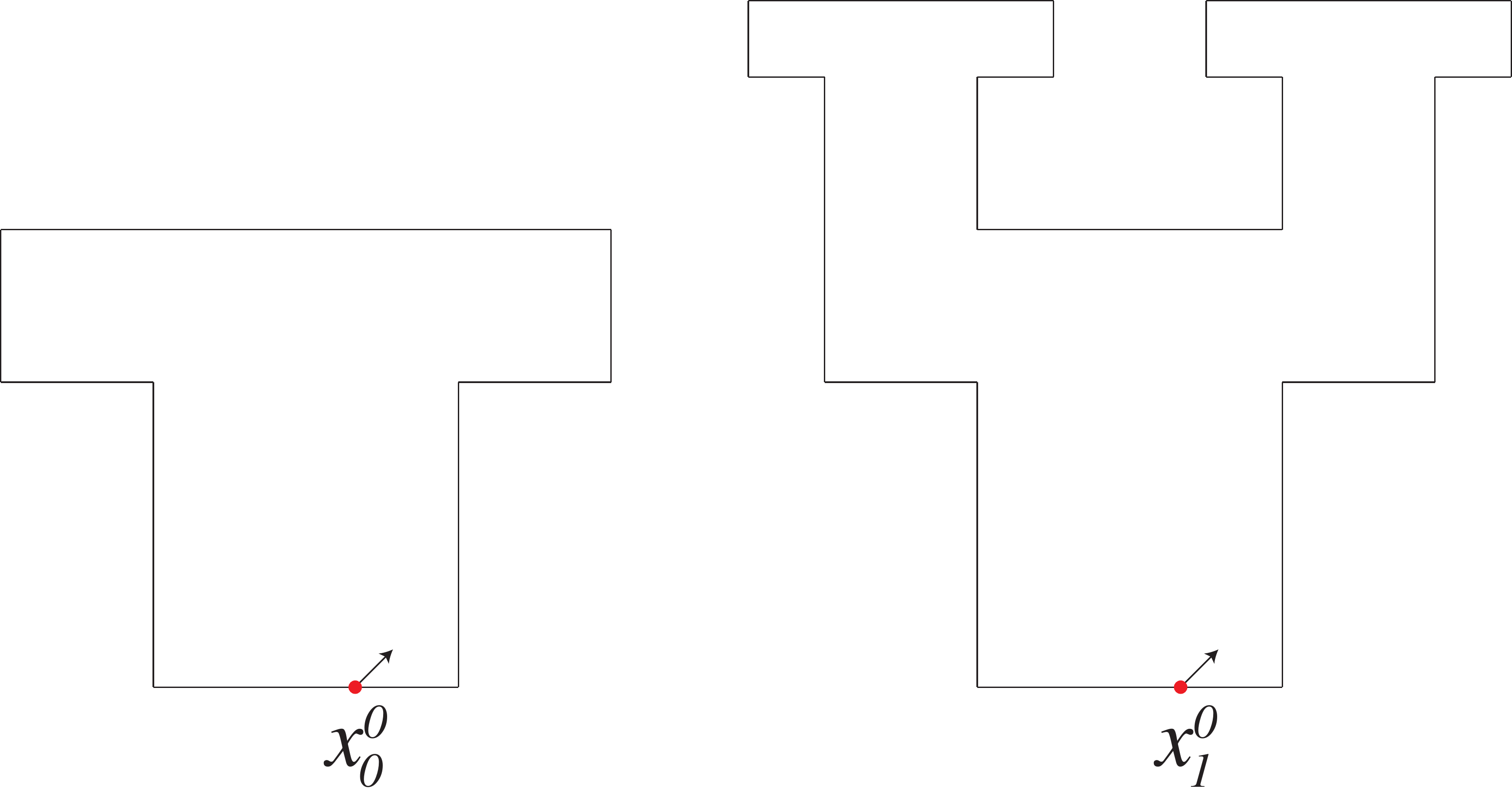}
\end{center}
\caption{Two initial conditions that are compatible initial conditions.  In this case, the initial basepoint of each initial condition is the same point in the plane.}
\label{fig:compInitCondTFractal}
\end{figure}
\begin{figure}
	\begin{center}
		\includegraphics[scale=.30]{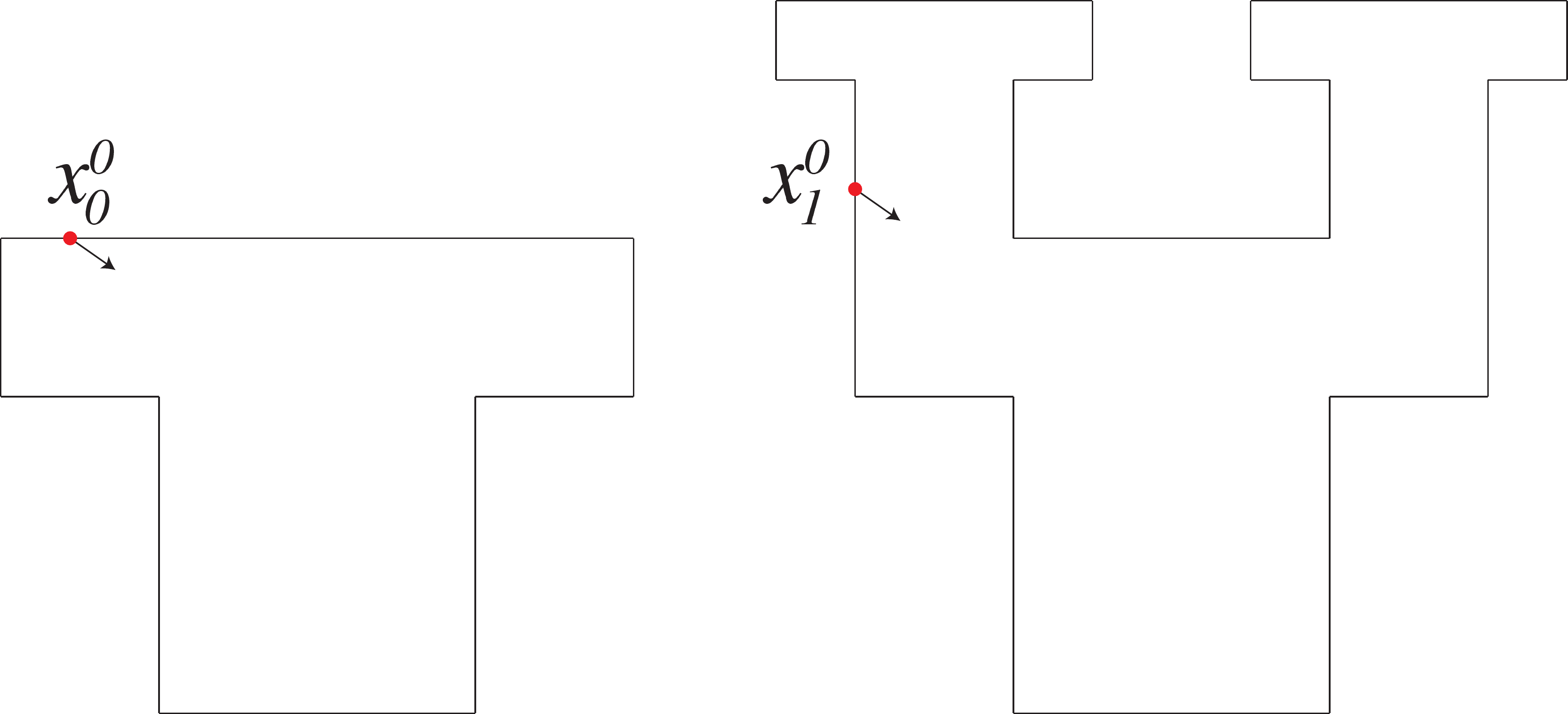}
	\end{center}
	\caption{Two compatible initial conditions for which $\xoo \neq \xio{1}$, but $\xio{n}=\xio{1}$ for all $n\geq 1$.  This case, and those similar to it, will not occur in this article.}
	\label{fig:altCompSeqInitCond}
\end{figure}
\begin{figure}
\begin{center}
	\includegraphics[scale=.20]{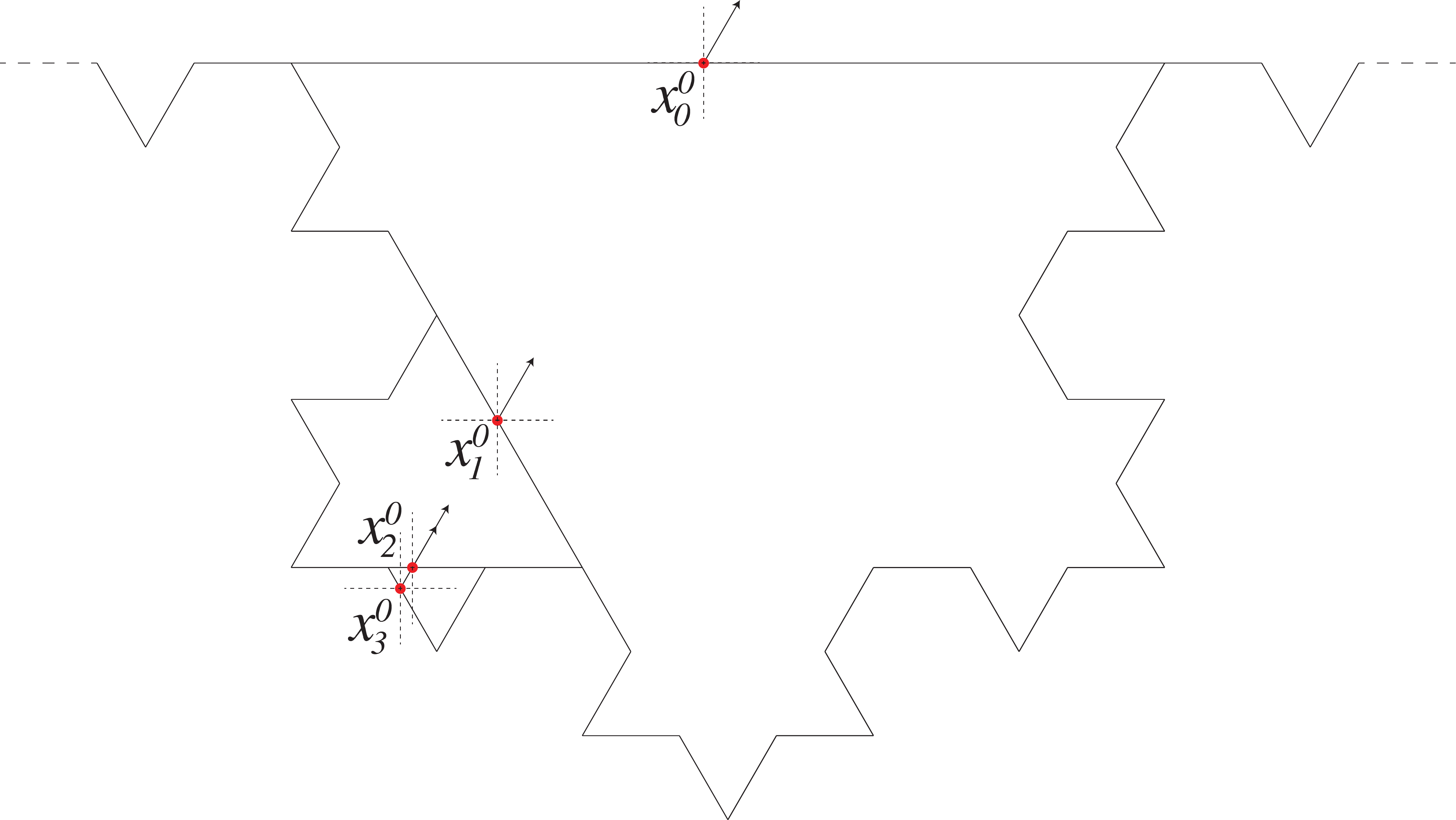}
\end{center}	
\caption{We show here part of an example of a sequence of compatible initial conditions of orbits of prefractal Koch snowflake billiard tables that demonstrates the necessity for the full generality exhibited in Definitions \ref{def:compatibleInitialConditions} and \ref{def:sequenceOfCompatibleInitialConditions}.}
\label{fig:kochSnowflakeExample}
\end{figure}
\begin{figure}
\begin{center}
\includegraphics[scale=.7]{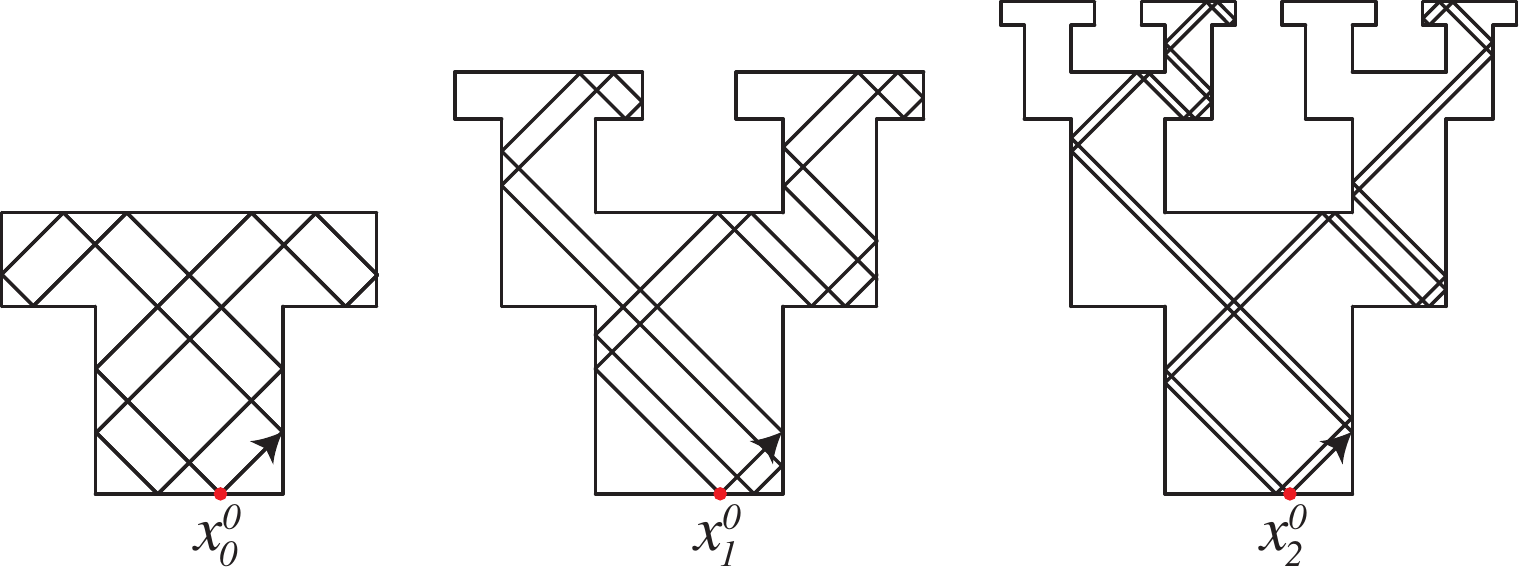}
\end{center}
\caption{A sequence of compatible periodic orbits of $\omegati{0}$, $\omegati{1}$ and $\omegati{2}$, respectively.  Each orbit has the same initial condition $(\frac{2}{3},\theta^0)$, where $\theta^0 = \pi/4$.}
\label{fig:T-FractalSequenceOfCompatibleOrbits}
\end{figure}
If $\orbitixang{m}{\xio{m}}{\theta_m^0}$ is an orbit of $\omegati{m}$, then $\orbitixang{m}{\xio{m}}{\theta_m^0}$ is a member of a sequence of compatible orbits $\seqiang{i}{\theta^0}$, for some $i\leq m$. It is clear from the definition of a sequence of compatible orbits that such a sequence is uniquely determined by the first orbit $\orbitiang{i}{\theta^0}$.  Since the initial condition of an orbit determines the orbit, we can say without any ambiguity that a sequence of compatible orbits is determined by an initial condition $(\xio{i},\theta^0)$.  Consequently, one is then in a position to discuss particular types of sequences of compatible orbits, namely, sequences where, for a given sequence, every orbit in that sequence has the same property.  More precisely, let $\mathcal{P}$ be a property (resp., $\mathcal{P}_1,...,\mathcal{P}_j$ a list of properties).  If every orbit in a sequence of compatible orbits has the property $\mathcal{P}$ (resp., a list of properties $\mathcal{P}_1,...,\mathcal{P}_j$), then we call such a sequence \textit{a sequence of compatible $\mathcal{P}$ \emph{(}resp., $\mathcal{P}_1,...,\mathcal{P}_j$\emph{)}  orbits}.

\begin{definition}[First return time $\upsilon_n$ of an orbit $\orbiti{n}$]
\label{def:firstReturnTime}
Let $\orbiti{n}$ be an orbit of $\omegati{n}$ with $\xio{n}$ on some side $\sigma$ of $\mathscr{T}_n$.  Then, the least positive integer $k$ such that $\xii{n}{k}$ lies on $\sigma$ is called the \textit{first return time} of the orbit and is denoted by $\upsilon_n$.
\end{definition}

In general, for any $j\geq 1$, we let $\upsilon_{n}^j$ be the $j$th return time of  the orbit $\orbiti{n}$.  As previously mentioned, given a sequence of compatible orbits $\seqi{i}$, the geometry of the $T$-fractal dictates that there exists $m\geq i$ such that $\xio{n} = \xio{m}$ for every $m\geq n$.  This implies that there exists $m\geq i$ such that for every $j\geq 1$, $\xii{n}{\upsilon^j_n}$ is on the same segment as $\xio{m}$ for all $n\geq m$.  

\begin{notation}
We denote by $\orbiti{n}_{\upsilon_n}$ the portion of the orbit given by $\{(\xii{n}{k},\theta_n^{k})\}_{k = 0}^{\upsilon_n}$.  At times, $\orbiti{n}_{\upsilon_n}$ may also denote the path connecting the points $\{\xii{n}{k}\}_{k = 0}^{\upsilon_n}$.  It will be clear from the context whether $\orbiti{n}_{\upsilon_n}$ is either viewed as a path or as a collection of elements in the phase space.

\label{not:nontrivialPathInOrbitNotation}
\end{notation}

\begin{definition}[First escape time $\tau_n$ of an orbit $\orbiti{n}$]
\label{def:firstEscapeTime}
Let $\orbiti{n}$ be an orbit of $\omegati{n}$ with $\xio{n}$ on some side $\sigma$ of $\mathscr{T}_n$, where $\xio{n}$ is not lying on a segment of $\sigma$ to be removed in the construction of $\tfraci{n+1}$ from $\tfraci{n}$.  Then, the least positive integer $k$ such that $\xii{n}{k}$ lies on a segment $\sigma'$ to be removed in the construction of $\omegati{n+1}$ from $\omegati{n}$ is called the \textit{first escape time} and is denoted by $\tau_n$.  In the event an orbit does not intersect with a segment to be removed in the construction of $\omegati{n+1}$ from $\omegati{n}$, the first escape time will be defined to be infinity.  
\end{definition}

\begin{notation}
We denote by $\orbiti{n}_{\tau_n}$ the portion of the orbit given by $\{(\xii{n}{k},\theta_n^{k})\}_{k = 0}^{\tau_n}$.  At times, $\orbiti{n}_{\tau_n}$ may also denote the path connecting the points $\{\xii{n}{k}\}_{k = 0}^{\tau_n}$.  It will be clear from the context whether $\orbiti{n}_{\tau_n}$ is viewed as a path or as a collection of elements in the phase space.

\end{notation}

\begin{figure}
\begin{center}
\includegraphics[scale = .80]{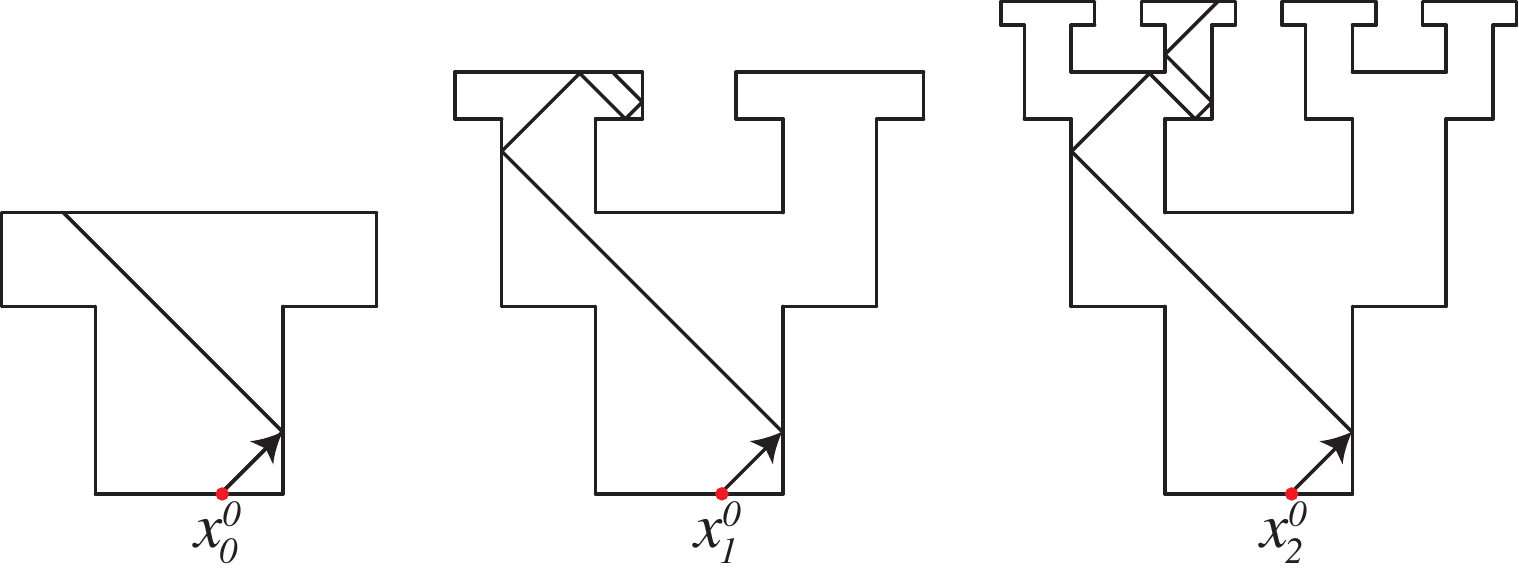}
\end{center}
\caption{An example of three paths from a sequence of paths reaching an elusive point of $\omegat$.  From left to right, the paths are determined from  $\orbitiang{0}{\theta^0}_{\tau_0},\orbitiang{1}{\theta^0}_{\tau_1},\orbitiang{2}{\theta^0}_{\tau_2}$.  These paths are also denoted by $\ntpi{0},\ntpi{1},\ntpi{2}$, respectively; see Definition \ref{def:ANontrivialPath}.}
\label{fig:portionOfOrbitHittingSegmentRemoved}
\end{figure}


Let $\seqi{i}$ be a sequence of compatible orbits such that  $\tau_n <\infty$ for every $n\geq i$.  From now on, we denote by $\ntpi{n}$ the path given by $\orbitiang{n}{\theta^0}_{\tau_n}$.  We next define what it means for a path in $\omegat$ to be a \textit{nontrivial path}. (Recall that, intuitively, a nontrivial path in $\omegat$ is one that manages to never remain confined in any finite approximation $\omegati{n}$ of the billiard table $\omegat$.)

\begin{definition}[Nontrivial path]   
\label{def:ANontrivialPath}
Let $\seqi{i}$ be a sequence of compatible orbits and $\ntpi{n}$  as defined in the preceding paragraph.  Then, $\overline{\bigcup_{n=i}^\infty \ntpi{n}}$ is called a \textit{nontrivial path of $\omegat$} and is  denoted by $\ntp$, where $x^0 := \xio{i}$.
\end{definition}


 In Figure \ref{fig:portionOfOrbitHittingSegmentRemoved}, the first three paths $\ntpi{i}$, $i=0,1,2$, of a sequence $\{\ntpi{n}\}_{n=0}^\infty$ derived from a particular sequence of compatible periodic orbits, are given. 

\begin{remark}
Definition \ref{def:ANontrivialPath} is more precise than Definition 5.5 given in the article  \cite{LapNie4}.  The notation of Definition \ref{def:ANontrivialPath} reflects the fact that the T-fractal billiard is the subject of the present paper, but is still general enough to be applicable to the case of the Koch snowflake fractal billiard.
\end{remark}

As was mentioned just after Definition \ref{def:firstReturnTime}, we denote by $\upsilon_n^j$ the $j$th return time of the orbit $\orbiti{n}$.  We now define what it means for an orbit of $\omegat$ to be a \textit{recurrent orbit} of the $T$-fractal billiard table.  We also note that Definitions \ref{def:recurrentOrbit}--\ref{def:ASingularOrbit} below are specific to the $T$-fractal billiard.  However, it is plausible that one may be able to appropriately generalize them so as to account for what have been called \textit{periodic orbits} of the given fractal billiard table(s) in each of \cite{CheNie1,LapNie1,LapNie2,LapNie3,LapNie4}.  Moreover, in some of the aforementioned works, we have indicated that a periodic orbit of a fractal billiard table may be the suitable limit of a sequence of compatible periodic orbits, namely the Hausdorff--Gromov limit.\footnote{\label{ftnote:HausdorffGromovLimit}We have discussed the use of the Hausdorff--Gromov limit since, technically speaking, each orbit is in an entirely different space, though each may project down onto the plane and into a billiard table that contains the previous prefractal billiard table approximation.}  In the results presented in the subsequent sections, we will see how the Hausdorff--Gromov limits of particular sequences of compatible orbits do in fact produce orbits of the $T$-fractal billiard satisfying Definition \ref{def:periodicOrbit}.  Definitions \ref{def:recurrentOrbit}--\ref{def:ASingularOrbit} remain more general in the event that the Hausdorff--Gromov limit of a sequence of compatible periodic orbits cannot be calculated or does not exist.  In the latter case, we may still see that such (perhaps, pathological) examples satisfy Definition \ref{def:periodicOrbit}, but exhibit some oddity that prevents them from being the Hausdorff--Gromov limit of a sequence of compatible orbits.

\begin{definition}[Recurrent orbit of $\omegat$]
Let $\seqi{i}$ be a sequence of compatible orbits such that $\xio{m} = \xio{i}$ for all $m\geq i$.\footnote{Recall that the geometry of the $T$-fractal billiard table makes this possible, as discussed in the text just after Definition \ref{def:sequenceOfCompatibleInitialConditions} and the text prior to Notation \ref{not:nontrivialPathInOrbitNotation}.}  If for every $j\geq 1$, we have that $\{\xii{n}{\upsilon^j_n}\}_{n=i}^\infty$ converges to some $x$ in the segment containing $\xio{i}$, then we denote $x$ by $x^{\upsilon^j}$ and call $\upsilon^j$ the $j$th return time of the orbit $\billiardOrbit$ and $\billiardOrbit$ a \textit{recurrent orbit} of $\omegat$.	
\label{def:recurrentOrbit}
\end{definition}

Figure \ref{fig:T-FractalSequenceOfCompatibleOrbits} gives an illustration of a sequence of points $\{\xii{n}{\upsilon_n}\}_{n=0}^\infty$ converging to some $x$ in the unit-interval base of $\omegat$.  Moreover, we consider $x$ to be the point of first return for the orbit of the fractal billiard table.  In general, it does not have to be the case that $\xii{n}{\upsilon_n^j} = x^{\upsilon^j}$ for sufficiently large $n$ and all $j\geq 1$.  In fact, in many of the examples and results that we will present, it is the case that $\xii{n}{\upsilon_n} \neq \xii{m}{\upsilon_m}$, whenever $n\neq m$; see, for example, Figure \ref{fig:T-FractalSequenceOfCompatibleOrbits}.  We now formally define what it means for an orbit to be a \textit{periodic orbit} of $\omegat$.

\begin{definition}[Periodic orbit of $\omegat$]
\label{def:periodicOrbit}
Let $\billiardOrbit$ be a recurrent orbit.  If $\{x^{\upsilon^j}\}_{j=1}^\infty$ is as defined in  Definition \ref{def:recurrentOrbit} and is a periodic sequence with finite period, then we say that $\billiardOrbit$ is a \textit{periodic orbit} of $\omegat$.	
\end{definition}

The three orbits shown in Figure \ref{fig:T-FractalSequenceOfCompatibleOrbits} are part of a sequence of compatible orbits satisfying Definitions \ref{def:recurrentOrbit} and  \ref{def:periodicOrbit}.  As one may be able to deduce from Figure \ref{fig:T-FractalSequenceOfCompatibleOrbits}, $\xii{n}{\upsilon_n} \neq \xii{m}{\upsilon_m}$ for all $n\neq m$, but $\xii{n}{\upsilon_n} \to x^{\upsilon}$ as $n\to \infty$ where $x^{\upsilon}=x^0$ and $x^0 := \xoo$. The sequence of compatible orbits illustrated in Figure \ref{fig:T-FractalSequenceOfCompatibleOrbits} is a sequence of compatible orbits where $\orbitixang{n}{\xio{n}}{\theta^0} \neq \orbitixang{m}{\xio{m}}{\theta^0}$ for all $n\neq m$.  On the other hand, Definitions \ref{def:recurrentOrbit} and \ref{def:periodicOrbit} are phrased in such a way as to also account for what we will see is the trivial limit of an eventually constant sequence of compatible periodic orbits; see Definition \ref{def:EventuallyConstanceSequenceOfCompatibleOrbits} and Theorem \ref{thm:eventuallyConstantSequenceOfCompatOrbits}.

When we speak of a singular orbit of $\omegati{n}$, we mean that the forward orbit is a singular orbit.  This is to differentiate from what are called saddle connections.\footnote{A saddle connection of a polygonal billiard table is an orbit that connects two singularities, or corners, of the billiard table.}  The reason for this distinction is that an orbit with $\theta^0$ such that $\tan\theta^0$ is irrational may yield a singular forward orbit, yet starting from the same point $\xio{n}$, but with a new direction of $\pi - \theta^0$, will yield a dense orbit, or vice-versa.

\begin{definition}[Singular orbit of $\omegat$]

Consider a sequence of compatible orbits $\seqi{i}$.  Then, we say that $\billiardOrbit$, where $x^0 = \xio{i}$, is a \textit{singular orbit} of $\omegat$ in either of the following two cases:

\begin{enumerate}
	\item For some $j\geq 1$, the sequence $\{x_n^{\upsilon_n^j}\}_{n=i}^\infty$ has more than one accumulation point.
\vspace{2 mm}
	\item  $\seqi{i}$ is a sequence of compatible singular orbits (i.e, where each orbit is a singular orbit of its respective prefractal approximation).
\end{enumerate}
\label{def:ASingularOrbit}
\end{definition}

In the classical case of a rational billiard table, when a pointmass intersects with a corner where the orbit cannot be continued in a well-defined manner, we terminate the trajectory.  It is very ambiguous how a pointmass would continue in such a situation.  In Part (1) of Definition \ref{def:ASingularOrbit}, the fact that, for some $j\geq 1$, the sequence $\{x_n^{\upsilon^j_n}\}_{n=i}^\infty$ does not converge to a single limit point highlights an analogous ambiguity for the pointmass in the fractal billiard table.  In \S\ref{sec:aNontrivialPathInAnIrrationalDirection}, we give an example of a sequence of compatible orbits satisfying Part (2) of Definition \ref{def:ASingularOrbit}.  While no example of a sequence of compatible orbits satisfying Part (1) has yet been found, this does not mean that such a definition is unnecessary.  Indeed, this definition captures the essence of a singular orbit, namely, that such an orbit cannot be continued in a well-defined manner.  For each $j\geq 1$, the set $\{\xii{n}{\upsilon_n^j}\}_{n=i}^\infty$ having two or more accumulation points is analogous to an orbit having two or more ways of continuing past a nonremovable singularity (i.e., one for which reflection cannot be well defined).  We believe that such an example can be found, this being the focus of \cite{JohNie2}.



\section{Sequences of compatible periodic orbits}
\label{sec:SequencesOfCompatiblePeriodicOrbits}
In this section, we give sufficient conditions for when a sequence of compatible orbits is a sequence of compatible \textit{periodic} orbits.  Our goal here is to lay the foundation for the study of the limiting behavior of particular sequences of compatible periodic orbits considered in \S\ref{sec:NontrivialPathsInTheTFractalBilliardTable}.  In \S\ref{sec:aNontrivialPathInAnIrrationalDirection},  we will give an example of a sequence of compatible singular orbits that yields a nontrivial path of the $T$-fractal billiard table.  Such a nontrivial path will constitute a singular orbit of $\omegat$.

We remind the reader that the geometry of $\omegati{0}$ is given in Figure \ref{fig:T0}.  Determining which intercepts and slopes yield line segments in the plane that avoid lattice points of the form $(\frac{a}{2^c},\frac{b}{2^d})$, with $c$ and $d$ nonnegative integers, is equivalent to specifying an initial condition of an orbit of a square billiard table that avoids corners of the billiard table.  Extending this reasoning to $\omegat$, we can determine various sufficient conditions for the existence of a sequence of compatible \textit{periodic} orbits of the prefractal billiard tables $\omegati{n}$, for $n\geq 0$.

Specifically, using the fact that an appropriately scaled square billiard table tiles $\omegati{n}$, we can reflect-unfold such an orbit in $\omegati{n}$ in order to determine an orbit of $\omegati{n}$.  

\begin{proposition}
\label{thm:WhatTheSlopeCannotBeInTFrac}
Let $\xio{0} = \frac{t}{h^k}$, with  $k,t$ being positive integers, $t$ and $h$ relatively prime, $h$ a positive odd integer and $0<t<h^k$.  Furthermore, let $m\in\R$.  If for every $p,q,r,s\in \Z$, $r,s\geq 0$, we have that
\begin{align}
m &\neq \frac{q2^{r-s}h^k}{ph^k-t2^r},
\label{eqn:WhatmCannotBe}
\end{align}
\noindent then the line $y=m(x-\xio{0})$ does not contain any point of the form $(\frac{a}{2^c},\frac{b}{2^d})$, $a,b,c,d\in \Z$, with $c,d\geq 0$.
\end{proposition}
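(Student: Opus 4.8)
\emph{Strategy.} The statement is purely arithmetic --- geometrically it records the fact that the line through $(\xio{0},0)$ of slope $m$ misses every vertex coordinate occurring in the prefractal approximations $\omegati{n}$ --- so the plan is to argue by contraposition together with one short computation. Suppose the line $y=m(x-\xio{0})$ contains a point $\bigl(\tfrac{a}{2^c},\tfrac{b}{2^d}\bigr)$ with $a,b,c,d\in\Z$ and $c,d\ge 0$. I will exhibit $p,q,r,s\in\Z$ with $r,s\ge 0$ realizing the forbidden value in \eqref{eqn:WhatmCannotBe}, contradicting the hypothesis.

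\emph{Main computation.} Substituting the point into the equation of the line and putting the parenthesized difference over the common denominator $2^c h^k$ gives
\[
\frac{b}{2^d}\;=\;m\left(\frac{a}{2^c}-\frac{t}{h^k}\right)\;=\;m\cdot\frac{a h^k-t2^c}{2^c h^k}.
\]
The key step is then to solve this identity for $m$: as long as $a h^k-t2^c\neq 0$, one obtains
\[
m\;=\;\frac{b\,2^{\,c-d}\,h^k}{a h^k-t2^c},
\]
which is exactly the right-hand side of \eqref{eqn:WhatmCannotBe} with $p=a$, $q=b$, $r=c$, $s=d$ --- here $r=c\ge 0$ and $s=d\ge 0$ as required, while the exponent $r-s$ is allowed to be negative. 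This contradicts \eqref{eqn:WhatmCannotBe} and disposes of the generic case.

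\emph{The only subtlety.} The lone remaining point --- and the single place the arithmetic hypotheses on $t$ and $h$ are used --- is the degenerate case $a h^k-t2^c=0$, i.e.\ $\tfrac{a}{2^c}=\tfrac{t}{h^k}=\xio{0}$. In that situation the displayed identity forces $\tfrac{b}{2^d}=m\cdot 0=0$, so the point in question is $(\xio{0},0)$ and $\xio{0}=\tfrac{a}{2^c}$ would be a dyadic rational. But $t2^c=a h^k$ with $h^k$ odd (hence coprime to $2^c$) yields $h^k\mid t$, contradicting $0<t<h^k$; thus this case cannot occur and the computation above always applies. I do not anticipate any genuine obstacle beyond this bookkeeping: the entire content of the proposition is this elementary divisibility observation, and the $q=0$ instance of \eqref{eqn:WhatmCannotBe} (which simply forbids $m=0$) is consistent with it, since $y=0$ does pass through dyadic points.
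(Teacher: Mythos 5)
Your proof is correct and follows essentially the same route as the paper's: substitute the hypothetical dyadic point into the line equation and solve for $m$ to land on the forbidden value with $p=a$, $q=b$, $r=c$, $s=d$. Your explicit treatment of the degenerate case $ah^k-t2^c=0$ (ruled out since $h^k$ is odd and $0<t<h^k$) is a small but welcome refinement that the paper's "after a few algebraic manipulations" silently skips.
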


Note that the condition (\ref{eqn:WhatmCannotBe}) above is automatically satisfied if the slope $m$ is irrational.

\begin{proof}
Suppose there exist $a,b,c,d\in \Z$, $c,d\geq 0$, such that
\begin{align}
\notag\frac{b}{2^d} = m\left(\frac{a}{2^c}-\xio{0}\right).
\end{align}
Then, after a few algebraic manipulations, we obtain that
\begin{align}
\notag 2^c\frac{b}{2^d} &= m(a-2^c\xio{0})
\end{align}
\noindent or, equivalently, 
\begin{align}
\notag\frac{h^k2^{c-d}b}{ah^k-t2^c} &=m,
\end{align}
\noindent which clearly contradicts hypothesis (\ref{eqn:WhatmCannotBe}).
\end{proof}

\begin{proposition}
\label{thm:WhatTheSlopeCanBeInTFrac}
Let $\xio{0} = \frac{t}{h^k}$, with $k,t$ being positive integers, $t$ and $h$ relatively prime, $h$ a positive odd integer and $0<t<h^k$.  If
\begin{align}
\notag m&=\frac{2^\gamma}{(2\alpha +1)^\beta},
\end{align}
\noindent with $\alpha,\beta,\gamma$ being nonnegative integers, then, for every $p,q,r,s\in \Z$ with $r,s\geq 0$, the point $(\frac{p}{2^r},\frac{q}{2^s})$ does not lie on the line $y=m(x-\xio{0})$.
\end{proposition}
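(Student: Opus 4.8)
The plan is to argue by contradiction, mirroring the structure of the proof of Proposition \ref{thm:WhatTheSlopeCannotBeInTFrac} but now extracting an arithmetic parity obstruction rather than a nonvanishing condition. Suppose that for some $p,q,r,s\in\Z$ with $r,s\geq 0$ the point $(\frac{p}{2^r},\frac{q}{2^s})$ lies on the line $y=m(x-\xio{0})$, where $m=\frac{2^\gamma}{(2\alpha+1)^\beta}$ and $\xio{0}=\frac{t}{h^k}$. Substituting and clearing denominators (multiplying through by $2^r h^k (2\alpha+1)^\beta$) yields a single equation in integers relating $q$, $p$, $t$, the odd number $h^k$, the odd number $(2\alpha+1)^\beta$, and the powers $2^\gamma$, $2^r$, $2^s$. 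The first step is simply to carry out this clearing of denominators cleanly.

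Next I would isolate the factors of $2$. The key observation is that $h$ is odd (hence $h^k$ is odd) and $2\alpha+1$ is odd (hence $(2\alpha+1)^\beta$ is odd), so on each side of the cleared equation the $2$-adic valuation is visible. Writing $q(2\alpha+1)^\beta 2^r h^k - q\,2^{?}\cdots$... more precisely, after clearing I expect an identity of the shape
\begin{align}
\notag q\,(2\alpha+1)^\beta\,2^{r-s}\,h^k \;=\; 2^\gamma\bigl(p\,h^k - t\,2^r\bigr),
\end{align}
valid as an equation among integers once one checks the exponents of $2$ are nonnegative (handling the cases $r\geq s$ and $r<s$ separately, or just comparing valuations without insisting on integrality). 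The main point is then a valuation count: the left side, being a product of $(2\alpha+1)^\beta$ (odd), $h^k$ (odd), $q$, and a power of $2$, has $2$-adic valuation $\geq r-s$ coming only from the explicit $2$-power plus whatever $q$ contributes; the right side has valuation exactly $\gamma + v_2(ph^k - t2^r)$, and here is where $t$ being coprime to $h$ — and in particular (since $h$ is odd, $t$ could be even, so one must be a little careful) the structure of $ph^k - t2^r$ — is used to pin down a parity contradiction. Essentially one shows the two sides cannot have matching $2$-adic valuations together with matching odd parts, because the odd part of the left side is divisible by $(2\alpha+1)^\beta$ in a way incompatible with the right side, or the valuations simply cannot agree.

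I expect the main obstacle to be bookkeeping the $2$-adic valuations correctly in all edge cases: when $\gamma=0$, when $\beta=0$ (so $m$ is itself a power of $2$), when $r=s$, when $t$ is even versus odd, and when $q$ itself carries powers of $2$. The cleanest route is probably to avoid case analysis by working entirely with $v_2(\cdot)$ and $v_\ell(\cdot)$ for odd primes $\ell \mid 2\alpha+1$: compare $2$-adic valuations on both sides to force a relation among $\gamma,r,s$ and $v_2(q)$, then compare the $\ell$-adic valuation to derive the contradiction from the factor $(2\alpha+1)^\beta$ appearing on only one side while $h^k$ is coprime to it is \emph{not} guaranteed — so the genuine contradiction must come purely from the power of two. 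Concretely, I anticipate the argument reduces to: the equation forces $2^\gamma \mid q\,2^{r-s}h^k$ with $h^k$ odd, hence $2^{\gamma}\mid q2^{\max(r-s,0)}$, and then reinserting this back shows $p h^k - t 2^r$ would have to be divisible by the odd number $(2\alpha+1)^\beta$, which is fine, but that $q(2\alpha+1)^\beta$ on the left must then equal an expression whose $2$-part is too large — the contradiction being a clean inequality between exponents of $2$. Getting that final inequality to close without a hidden case is the crux; everything before it is routine algebra of the same flavor as the preceding proposition.
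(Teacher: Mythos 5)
Your opening move is the right one and matches the paper's: substituting and clearing denominators does produce the integer identity
\begin{align}
\notag q\,(2\alpha+1)^\beta\,2^{r}\,h^k \;=\; 2^{\gamma+s}\bigl(p\,h^k - t\,2^r\bigr),
\end{align}
equivalently $h^k\bigl(2^{\gamma+s-r}p - q(2\alpha+1)^\beta\bigr) = t\,2^{\gamma+s}$, which is exactly the paper's Equation (\ref{eqn:aCalcShowingOddIsNotEven}). But the mechanism you propose for the contradiction --- a mismatch of $2$-adic valuations --- is the wrong one, and it cannot be repaired. The $2$-adic valuations of the two sides are $v_2(q)+r$ and $\gamma+s+v_2(ph^k-t2^r)$ respectively, and nothing forces these to disagree (the integers $p$ and $q$ are unconstrained, so they can absorb whatever powers of $2$ are needed); the ``clean inequality between exponents of $2$'' you hope will close the argument does not exist. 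Likewise the odd primes dividing $2\alpha+1$ give nothing, since $(2\alpha+1)^\beta$ enters only as an odd factor multiplying $q$.

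The contradiction actually lives at the primes dividing $h$, which your proposal gestures at (``$t$ being coprime to $h$'') but never deploys. Since $t$ is a positive integer with $0<t<h^k$, one has $h^k>1$, hence $h\geq 3$. The left-hand side of the displayed identity is divisible by $h$, while the right-hand side satisfies $2^{\gamma+s}(ph^k-t2^r)\equiv -t\,2^{\gamma+s+r}\pmod{h}$, which is nonzero modulo $h$ because $h$ is odd and $\gcd(t,h)=1$. This single reduction modulo $h$ is the entire proof, and is precisely the paper's argument; none of the case analysis you anticipate (on $\gamma$, $\beta$, $r$ versus $s$, or the parity of $t$ or $q$) is needed. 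In short, the oddness of $2\alpha+1$ and of $h$ serves only to keep powers of $2$ from interfering; the decisive hypotheses are $\gcd(t,h)=1$ and $h^k>1$, not anything $2$-adic.
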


\begin{proof}
Suppose there exist $p,q,r,s\in \Z$ with $r,s\geq 0$ such that
\begin{align}
\notag\frac{q}{2^s} &=m\left(\frac{p}{2^r} - \xio{0}\right).
\end{align}
\noindent Then, after various algebraic manipulations, we arrive at
\begin{align}  
h^k(2^{\gamma+s-r}p-q(2\alpha+1)^\beta) &= t2^{\gamma+s}.
\label{eqn:aCalcShowingOddIsNotEven}
\end{align}
\noindent Since $k,t>0$ and $(t,h)=1$ (i.e., $t$ and $h$ are relatively prime), we see that the left-hand side of Equation (\ref{eqn:aCalcShowingOddIsNotEven}) contains a factor of $h$ and the right-hand side of Equation (\ref{eqn:aCalcShowingOddIsNotEven}) does not.  This is a contradiction.  Hence, the point $(\frac{p}{2^r},\frac{q}{2^s})$ does not lie on the line $y=m(x-\xio{0})$.
\end{proof}

Finally, Propositions \ref{thm:WhatTheSlopeCannotBeInTFrac} and \ref{thm:WhatTheSlopeCanBeInTFrac} (combined with the fact that an initial condition of an orbit of $\omegati{i}$, $i\geq 0$, determines a sequence of compatible orbits $\seqi{i}$), allow us to determine a countably infinite family of sequences of compatible periodic orbits.  We state this as a theorem.

\begin{theorem}
\label{thm:sequenceOfCompatibleNonsingularOrbitsPeriodicIfRational}
Let $\xoo$ be a point in the unit interval and $\theta^0$ be such that $m=\tan\theta^0$.  If $\xoo$ and $m$ satisfy Propositions \ref{thm:WhatTheSlopeCannotBeInTFrac} or \ref{thm:WhatTheSlopeCanBeInTFrac}, then $\seqi{0}$ is a sequence of compatible orbits for which each orbit is nonsingular in its respective billiard table.  Moreover, if $m$ is rational, then $\seqi{0}$ is a sequence of compatible \textit{periodic} orbits.
\end{theorem}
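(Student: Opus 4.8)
The plan is to exploit the fact, emphasized in the paragraph preceding Proposition \ref{thm:WhatTheSlopeCannotBeInTFrac}, that each prefractal approximation $\omegati{n}$ is tiled by suitably scaled copies of the unit square, together with the corner-avoidance content of Propositions \ref{thm:WhatTheSlopeCannotBeInTFrac} and \ref{thm:WhatTheSlopeCanBeInTFrac}. First I would observe that, under either hypothesis, the line $y = m(x - \xoo)$ emanating from $(\xoo,0)$ passes through no point of the form $(\frac{a}{2^c},\frac{b}{2^d})$ with $c,d\geq 0$ nonnegative integers; indeed Proposition \ref{thm:WhatTheSlopeCannotBeInTFrac} gives this directly when $m$ satisfies \eqref{eqn:WhatmCannotBe} (in particular whenever $m$ is irrational), and Proposition \ref{thm:WhatTheSlopeCanBeInTFrac} gives it when $m = 2^\gamma/(2\alpha+1)^\beta$. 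Since every vertex of every $\omegati{n}$ is a dyadic-rational lattice point of exactly this form (each appended copy of the base $T$ at level $j$ is scaled by $2^{-j}$ and translated by vectors with dyadic-rational coordinates, as in \eqref{eqn:IFS}), the orbit $\orbiti{n}$ with initial condition $(\xio{n},\theta^0)$ never meets a vertex of $\omegati{n}$. Hence each orbit in the sequence is nonsingular in its respective billiard table, and by Part (2) of Definition \ref{def:ASingularOrbit} the limiting object is not forced to be singular on that account. That the collection $\seqi{0}$ genuinely forms a sequence of compatible orbits is exactly the remark recorded after Definition \ref{def:SequenceOfCompatibleOrbits}: an initial condition $(\xoo,\theta^0)$ on the base of $\omegati{0}$ determines a unique sequence of compatible orbits, and since $\xoo$ lies in the unit interval and (by the hypotheses) is not a removable point, the basepoints stabilize, $\xio{n} = \xoo$ for all $n$.

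Next I would prove periodicity when $m$ is rational. The standard tool is the reflection–unfolding (Katok–Zemljakov) construction: because $\omegati{n}$ is a rational polygon all of whose angles are multiples of $\pi/2$, unfolding the billiard flow in direction $\theta^0$ produces a flat translation surface built from finitely many unit-square tiles, on which the billiard trajectory becomes a straight line of slope $m = p/q$ in lowest terms. A straight line of rational slope on a translation surface tiled by unit squares, provided it avoids all the cone points, is a closed geodesic — its length is controlled by the denominators of $m$ and the size of the surface — and folding back down yields a periodic billiard orbit of $\omegati{n}$. The corner-avoidance established in the first paragraph is precisely what guarantees the unfolded line misses the cone points (the images of the vertices), so the argument applies verbatim to every $n\geq 0$. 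Thus each $\orbiti{n}$ is periodic, i.e., $\seqi{0}$ is a sequence of compatible \emph{periodic} orbits, which is the assertion of the theorem.

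I expect the main obstacle to lie in the bookkeeping for the unfolding step at level $n$: one must verify that the relevant unit-square tiling of $\omegati{n}$ is fine enough that the basepoint $\xoo$ and all subsequent bounce points $\xii{n}{k}$ lie on tile edges, so that the orbit really is the fold-up of a single straight line on the unfolded surface, and that the dyadic denominators appearing at level $n$ (scales $2^{-j}$, $0\leq j\leq n$) are all accommodated — this is why the hypotheses are phrased in terms of arbitrary dyadic lattice points $(\frac{a}{2^c},\frac{b}{2^d})$ rather than a fixed scale. The remaining points — compatibility, stabilization of basepoints, and nonsingularity — follow cleanly from the definitions in \S\ref{sec:background} and the two preceding propositions, so I would keep the exposition of those brief and concentrate the detailed argument on the unfolding and on the claim that a corner-avoiding rational-slope line on the square-tiled surface is closed.
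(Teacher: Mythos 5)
Your proposal is correct and follows essentially the same route as the paper's own (sketched) proof: use Propositions \ref{thm:WhatTheSlopeCannotBeInTFrac} and \ref{thm:WhatTheSlopeCanBeInTFrac} to see that the unfolded trajectory avoids all dyadic points, hence all corners of every square tiling $\omegati{n}$ at scale $2^{-(n+1)}$, which gives nonsingularity and compatibility, and then invoke the standard fact that a corner-avoiding trajectory of rational slope on a square-tiled rational polygon is periodic. Your version is somewhat more explicit about the Katok--Zemljakov unfolding step, which the paper compresses into the tiling remark, but the underlying argument is the same.
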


\begin{proof}[Sketch of the proof]
	In both Propositions \ref{thm:WhatTheSlopeCannotBeInTFrac} and  \ref{thm:WhatTheSlopeCanBeInTFrac}, the idea is to construct a family of point-slope pairs such that any line with such a slope and passing through the given point necessarily avoids all points of that plane that would have corresponded to any dyadic point.  If one tiles the plane by squares with side-length $\frac{1}{2^k}$, $k\geq 0$, then any line that avoids dyadic points of the plane necessarily avoids corners of any square with side-length $\frac{1}{2^k}$ tiling the plane.  Since for all $k\geq 0$, $\omegati{k}$ is tiled by squares of side-length $\frac{1}{2^{k+1}}$, we have that any orbit $\orbitixang{k}{\xio{k}}{\theta^0}$ with $\xio{k}$ and $m = \tan \theta^0$  satisfying the hypotheses of Propositions \ref{thm:WhatTheSlopeCannotBeInTFrac} and \ref{thm:WhatTheSlopeCanBeInTFrac} will avoid corners of $\omegati{k}$.  Thus, $\seqixang{0}{\xio{n}}{\theta^0}$ will be a sequence of compatible orbits.  Moreover, if $m=\tan \theta^0$ is rational, each orbit $\orbitixang{n}{\xio{n}}{\theta^0}$ will be periodic in $\omegati{n}$.  Therefore, when $m$ is rational, $\seqixang{0}{\xio{n}}{\theta^0}$ will be a sequence of compatible \textit{periodic} orbits.
\end{proof}

\begin{example}
\label{exa:T-fractalSequenceOfCompatibleOrbits}
Let $\xio{0} = \frac{2}{3}$ and $\theta^0 = \frac{\pi}{4}$.  Then $\seqang{\frac{\pi}{4}}$ is a sequence of compatible periodic orbits; see Figure \ref{fig:T-FractalSequenceOfCompatibleOrbits}.
\end{example}

We now introduce a specific family of sequences of compatible periodic orbits.  A sequence of compatible periodic orbits in such a family will clearly have a trivial limit, this point being discussed at the end of \S\ref{sec:NontrivialPathsInTheTFractalBilliardTable}.

\begin{definition}[Eventually constant sequence of compatible orbits]
\label{def:EventuallyConstanceSequenceOfCompatibleOrbits}
Consider the sequence of compatible orbits $\seqi{i}$.  If there exists a nonnegative integer $N$ such that for every $n\geq N$, the points for which $\orbitiang{n}{\theta^0}$ intersects $\omegati{n}$  are the same points for which $\orbitiang{N}{\theta^0}$ intersects $\omegati{N}$, then we say that $\seqi{i}$ is an \textit{eventually constant sequence of compatible orbits}.  Furthermore, a sequence of compatible orbits $\seqi{i}$ is an eventually constant sequence of compatible orbits if and only if $\seqiang{i}{\pi - \theta^0}$ is an eventually constant sequence of compatible orbits.\footnote{Recall that we are requiring that $\xio{n} = \xio{i}$ for all $n\geq i$.}
\end{definition}

It should be made clear that a sequence of compatible dense orbits given by $\seqi{i}$ will never be an eventually constant sequence of compatible orbits, because we are concerned with both the forward orbit and the backwards orbit simultaneously, though the notation may belie this fact; this is another subtle difference between an orbit and a nontrivial path.  Suppose  $\seqi{i}$ yields a singular orbit of $\omegat$, in the sense that each (forward) orbit $\orbitiang{n}{\theta^0}$ is a singular orbit.  Instead, now consider the sequence of compatible orbits $\seqiang{i}{\pi-\theta^0}$.  Such a sequence of compatible orbits is never eventually constant, since each orbit must be uniformly distributed in its respective billiard table.  We will see an example of this phenomenon in \S\ref{sec:aNontrivialPathInAnIrrationalDirection}.

The following two lemmas are necessary for establishing sufficient conditions for the existence of a particular family of eventually constant sequences of compatible periodic orbits of prefractal billiard tables.

\begin{lemma}
\label{lem:eventuallyConstantStartingFrom1-2}
Suppose an orbit  of $\omegati{0}$ has an initial condition $(\xoo,\theta^0)$ such that $\xoo = \frac{1}{2}$ is an element of the base of $\omegati{0}$ and $\tan\theta^0 = 2^{-n}$ for some integer $n\geq 2$.  Then, the sequence of compatible orbits $\seqii{0}{k}$ is an eventually constant sequence of compatible periodic orbits.
\end{lemma}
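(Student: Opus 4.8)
The plan is to show that, under the hypothesis $n\geq 2$, the orbit $\orbitiang{k}{\theta^0}$ (with $\xoo=\tfrac12$ and $\tan\theta^0=2^{-n}$) is literally \emph{the same} closed billiard path for every $k\geq 0$, namely a path lying entirely inside $\omegati{0}$. Granting this, the lemma follows at once: such a sequence of compatible orbits is constant, hence \emph{a fortiori} an eventually constant sequence of compatible orbits in the sense of Definition \ref{def:EventuallyConstanceSequenceOfCompatibleOrbits}, and each of its members is a periodic orbit.

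First I would fix coordinates as in Figure \ref{fig:T0}, so that $\omegati{0}$ consists of the unit square $[0,1]^2$ together with the $2\times\tfrac12$ rectangle $R=[-\tfrac12,\tfrac32]\times[1,\tfrac32]$ glued on top along the open segment $(0,1)\times\{1\}$. Two structural facts should be recorded at the outset. (i) Every prefractal $\omegati{k}$ is invariant under the reflection $\sigma\colon(x,y)\mapsto(1-x,y)$ in the line $x=\tfrac12$: the square, the rectangle $R$, and — by a routine induction — the whole configuration of the copies of the base $T$ appended at each level are all symmetric about $x=\tfrac12$. (ii) In passing from $\omegati{k}$ to $\omegati{k+1}$, copies are attached only over the ``outer'' portions of the top edge of each rescaled copy of $R$; hence the central segment $(0,1)\times\{\tfrac32\}$ of the top edge of $R$, and the whole base $[0,1]\times\{0\}$, remain \emph{walls} of $\omegati{k}$ for every $k\geq 0$.

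Next I would trace the orbit via the reflection--unfolding technique used in the proof of Theorem \ref{thm:sequenceOfCompatibleNonsingularOrbitsPeriodicIfRational}. Inside the square the motion unfolds to the line $y=2^{-n}(x-\tfrac12)$; its reflections off the walls $x=0$ and $x=1$ occur at the heights $(2i-1)2^{-n-1}$ for $1\leq i\leq 2^{n}$, all strictly between $0$ and $1$, so the orbit misses the reentrant corners $(0,1)$ and $(1,1)$ and leaves the square through the gluing segment at the point $(\tfrac12,1)$, heading to the right (as $2^n$ is even). Unfolding the motion inside the width-$2$ rectangle $R$ similarly, its reflections off the walls $x=-\tfrac12$ and $x=\tfrac32$ occur at heights $1+(2i-1)2^{-n}$, which — because $n\geq 2$ — all lie strictly below $\tfrac32$; after folding back, the orbit then strikes the top edge of $R$ at the point $(\tfrac12,\tfrac32)$, a legitimate reflection in every $\omegati{k}$ by (ii). The hypothesis $n\geq 2$ enters at exactly this point: parity of the powers of $2$ involved guarantees both that the orbit enters $R$ heading to the right and that it meets the top edge of $R$ at $x=\tfrac12$, i.e.\ at an interior point of the wall segment $(0,1)\times\{\tfrac32\}$ rather than at a corner of $R$; for $n\leq1$ the orbit would instead meet the boundary of $R$ at a point (at $x=0$ when $n=0$, at the corner $x=\tfrac32$ when $n=1$) that becomes a reentrant corner of $\omegati{1}$, and the conclusion would fail. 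Finally, since the two ``turning points'' $(\tfrac12,0)$ and $(\tfrac12,\tfrac32)$ both lie on the fixed line $\{x=\tfrac12\}$ of $\sigma$ and the orbit strikes the horizontal top edge of $R$ at $(\tfrac12,\tfrac32)$, a short computation with $\sigma$ and time-reversal shows that the part of the orbit issuing from $(\tfrac12,\tfrac32)$ is the $\sigma$-mirror image, traversed backwards, of the part arriving there; consequently the orbit returns to $(\tfrac12,0)$ heading downward, reflects off the base, and closes up with its initial direction $\theta^0$. In particular it meets no corner of $\omegati{k}$ at all, so — its slope $2^{-n}$ being rational — it is a \emph{periodic} orbit of $\omegati{k}$.

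It then remains only to assemble these facts: every point of the trajectory just described lies in $\omegati{0}\subseteq\omegati{k}$ and the trajectory meets none of the copies attached at levels $\geq 1$, so $\orbitiang{k}{\theta^0}$ intersects $\omegati{k}$ in precisely the same set of points for every $k\geq 0$; hence $\seqii{0}{k}$ is a constant — and therefore eventually constant — sequence of compatible periodic orbits, as required. (Compatibility of the initial conditions is immediate, $\xoo=\tfrac12$ being an interior point of the base of $\omegati{0}$.) I expect the main obstacle to be the bookkeeping in the two unfolding computations — specifically, checking that the unfolded line, after folding back, lands exactly on $x=\tfrac12$ both when it crosses $y=1$ and when it crosses $y=\tfrac32$, and that no intermediate reflection point is a corner of $\omegati{0}$; these are the incidences that hinge on the parity of powers of $2$ and that are responsible for the restriction $n\geq 2$.
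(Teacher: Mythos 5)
Your proof is correct and follows essentially the same route as the paper's: both unfold the trajectory to show it leaves the unit square at $(1/2,1)$ with its direction preserved and then strikes the top of the overhanging rectangle exactly at the midpoint $(1/2,3/2)$, a point that remains a wall in every $\omegati{k}$, so the orbit never meets a removed segment and the sequence is constant. Your write-up is somewhat more explicit where the paper is terse (the paper invokes a corner-to-corner orbit of the half-size square and appeals to ``symmetry,'' whereas you carry out the unfolding in the rectangle and the time-reversal closing-up argument directly), but the underlying idea is identical.
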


\begin{proof}

We proceed by discussing the orbits of a square billiard table with the same initial conditions or related initial conditions.  

Let $\Omega(S)$ be the unit square billiard table and $\Omega(S')$ be the square billiard table with side-length $2^{-1}$.  Unfolding the orbit $\mathscr{O}_S((2^{-1},0),\theta^0)$ in a tiling of the plane by the unit square $S$ results in a straight-line path that intersects $2^n + 1$ squares before reaching the point $(2^n+2^{-1},1)$ in the plane.  During the unfolding process, $2^n$ many unfoldings were made to produce the straight-line path.  Hence, the billiard ball intersects the top of the unit square with a direction that is identical to $\theta^0$.  

Now, consider an orbit $\mathscr{O}_{S'}((0,0),\theta^0)$ of the smaller square billiard table $\Omega(S')$.  Such an orbit intersects the top left singularity of $S'$.  Hence, the reflected-unfolding of $\orbitixang{S'}{(0,0)}{\theta^0}$ embedded in $\Omega(S)$ such that $(0,0)$ in $S'$ corresponds to $(2^{-1},0)$ in $S$ will intersect the midpoint of the top of the billiard $\Omega(S)$.  Since $\omegati{0}$ is tiled by $S'$, we see that the orbit will continue and intersect the midpoint of the top of $\omegati{0}$. By symmetry, the reflected-unfolded orbit does not intersect any segment removed in subsequent approximations.  In addition, the orbit $\orbitixang{0}{(2^{-1},0)}{\theta^0}$ in $\omegati{0}$ remains fixed for every subsequent approximation.
\end{proof}

\begin{example}
In Figure \ref{fig:exampleOfOrbitDescribedInLemma}, we see an example of an orbit of $\omegati{0}$ described in Lemma \ref{lem:eventuallyConstantStartingFrom1-2}.  
\end{example}

\begin{figure}
\begin{center}
\includegraphics[scale=2]{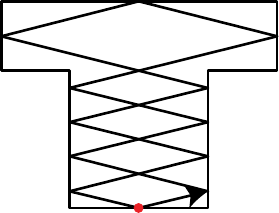}
\end{center}

\caption{An example of an orbit described in Lemma \ref{lem:eventuallyConstantStartingFrom1-2}.  The initial condition of this orbit is $((1/2,0),\theta^0)$, where $\tan\theta^0 = 1/4$.  This orbit is the first orbit in an eventually constant sequence of compatible orbits.}
\label{fig:exampleOfOrbitDescribedInLemma}
\end{figure}

\begin{lemma}
\label{thm:intersectsMidpointOfTop}
Let $k$ be a positive integer.  Consider $\xio{k}$, the midpoint of a lower horizontal segment overhanging the square stump of $\omegati{k}$.  If $\theta^0$ is such that $\tan \theta^0 = 2^{-n}$ for some  integer $n\geq 2$, then $\orbitiang{k}{\theta^0}$ is an orbit which remains in a rectangular region of $\omegati{k}$ and intersects the top of $\omegati{k}$ at the midpoint of a segment removed in the construction of $\omegati{k+1}$ from $\omegati{k}$.
\end{lemma}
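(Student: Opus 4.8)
The plan is to reduce Lemma \ref{thm:intersectsMidpointOfTop} to Lemma \ref{lem:eventuallyConstantStartingFrom1-2} by a scaling and embedding argument, exploiting the self-similar appearance of $\omegati{0}$ inside each $\omegati{k}$ under the contraction maps $\phi_1,\phi_2$ of the iterated function system $\Phi$. First I would fix notation: the ``square stump'' of $\omegati{k}$ at a given location is a scaled copy of the $1\times 1$ square of $\omegati{0}$, of side-length $2^{-k}$, carrying on top of it (as part of $\omegati{k}$) a scaled copy of the $2\times\frac12$ rectangle, also scaled by $2^{-k}$; the point $\xio{k}$ is by hypothesis the midpoint of the lower horizontal edge of that overhanging rectangle — that is, the image under the appropriate composition $\phi_{i_1}\circ\cdots\circ\phi_{i_k}$ of the point $(\tfrac12,0)$, the midpoint of the base of $\omegati{0}$. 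Since each $\phi_i$ is a homothety with ratio $\tfrac12$ (an orientation-preserving similarity with no rotation), it maps the direction $\theta^0$ to itself; thus the billiard orbit of $\omegati{k}$ with initial condition $(\xio{k},\theta^0)$, restricted to the scaled copy of $\omegati{0}$ sitting over that stump, is exactly the $\phi_{i_1}\circ\cdots\circ\phi_{i_k}$-image of the orbit $\orbitixang{0}{(1/2,0)}{\theta^0}$ in $\omegati{0}$.

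The key step is then to invoke Lemma \ref{lem:eventuallyConstantStartingFrom1-2}, or rather its proof: the orbit $\orbitixang{0}{(1/2,0)}{\theta^0}$ with $\tan\theta^0 = 2^{-n}$, $n\geq 2$, stays within the $1\times1$ square region of $\omegati{0}$ (by the unfolding computation there, it traverses $2^n+1$ unit squares and returns to the top edge of the square at its midpoint $(\tfrac12,1)$ with direction still $\theta^0$), and in particular never enters the overhanging $2\times\tfrac12$ rectangle and never touches a segment removed in passing from $\omegati{0}$ to $\omegati{1}$ other than at the midpoint of the top. Transporting this under the similarity $\phi_{i_1}\circ\cdots\circ\phi_{i_k}$: the orbit $\orbitiang{k}{\theta^0}$ remains inside the scaled-square region — which is exactly ``a rectangular region of $\omegati{k}$'' — and it meets the top edge of that square region at its midpoint; but that top edge, at level $k$, is precisely a segment removed in constructing $\omegati{k+1}$ from $\omegati{k}$ (it is where the two new copies of $\omegati{0}$ at scale $2^{-k-1}$ get appended), and the midpoint of it is where those two copies meet. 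One must also check that the orbit does not escape the scaled square region into the part of $\omegati{k}$ lying below or to the side of this particular stump — but this is immediate because the scaled copy of the square of $\omegati{0}$ is a genuine sub-polygon of $\omegati{k}$ whose only ``openings'' into the rest of $\omegati{k}$ are the removed segment on top (which the orbit only reaches at the very end, at the midpoint) and the bottom edge (which the orbit, being a scaled copy of one confined to the square of $\omegati{0}$, approaches only at interior reflection points, never at $\xio{k}$'s own edge in a way that exits).

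I expect the main obstacle to be the bookkeeping needed to justify that the similarity image of the orbit really is the billiard orbit in $\omegati{k}$ — i.e., that the walls of the scaled copy of $\omegati{0}$ inside $\omegati{k}$ impose the same reflections as the walls of $\omegati{0}$ itself, and that no wall of $\omegati{k}$ \emph{outside} that copy is hit before the orbit terminates at the midpoint of the top. This is geometrically clear from Figure \ref{fig:tfractal} and the inductive construction $\omegati{n} = \bigcup_{j=0}^n \Phi^j(\omegati{0})$, but writing it cleanly requires one to articulate that a scaled square stump of $\omegati{k}$, together with its removed top segment, forms a sub-region isometric (up to scaling) to the $1\times 1$ square of $\omegati{0}$ with its top segment, and that the billiard flow respects this decomposition as long as the orbit stays away from the removed segment. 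The direction condition $n\ge 2$ is inherited verbatim from Lemma \ref{lem:eventuallyConstantStartingFrom1-2} and guarantees that the slope is shallow enough for the orbit to return to the top edge at the midpoint rather than escaping through a side; no new constraint on $n$ is introduced by the scaling.
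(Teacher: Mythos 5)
There is a genuine gap here, and it stems from a misreading of the geometry. You identify $\xio{k}$ with the image of $(\tfrac12,0)$ under a composition $\phi_{i_1}\circ\cdots\circ\phi_{i_k}$, i.e., with the midpoint of the \emph{base of the square stump} of a scaled copy of $\tfraci{0}$. But that image point is the midpoint of a segment \emph{removed} at the previous stage (for instance $\phi_1\bigl((\tfrac12,0)\bigr)=(\tfrac54,\tfrac32)$ lies on the removed segment $[1,\tfrac32]\times\{\tfrac32\}$ of $\tfraci{0}$), whereas the lemma's initial point is the midpoint of a \emph{lower horizontal segment overhanging} the square stump: a point on the bottom edge of the $2\times\tfrac12$ rectangular part, e.g.\ $(\tfrac54,1)$ or $(-\tfrac14,1)$ in $\omegati{0}$. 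Accordingly, the region in which the orbit is confined is the (scaled) $2\times\tfrac12$ rectangle, not the square stump --- that is what ``a rectangular region'' means here, as the caption of Figure \ref{fig:orbitRemainsInRectangle} makes explicit (``it never intersects the base of $\omegati{0}$''). Your further claim that the top edge of the square region is a segment removed in constructing $\omegati{k+1}$ is also false: that edge is interior to the copy of $\tfraci{0}$ (the rectangle sits on it), and the removed segments are the two \emph{end} pieces of the top of the rectangle, where the next-generation copies are appended.

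Because of this misidentification, your reduction to Lemma \ref{lem:eventuallyConstantStartingFrom1-2} proves the wrong statement: the orbit launched from the image of $(\tfrac12,0)$ terminates at the midpoint of the top of the scaled copy of $\tfraci{0}$, which lies on the central, \emph{non-removed} portion of that top --- this is precisely why Lemma \ref{lem:eventuallyConstantStartingFrom1-2} yields an \emph{eventually constant} sequence --- whereas the present lemma must show that the orbit launched from an overhang midpoint \emph{escapes}, i.e., first meets the top at the midpoint of a \emph{removed} segment. The missing content is the unfolding computation in the rectangle: tile the $2\times\tfrac12$ rectangle by four squares of side $\tfrac12$, unfold the trajectory of slope $2^{-n}$ from the midpoint of an end square's bottom edge, and use $2^{n}\equiv 0\ (\mathrm{mod}\ 4)$ for $n\geq 2$ to see that the first intersection with the top occurs at the midpoint of one of the two end segments, which are exactly the removed ones; since the trajectory rises monotonically (reflecting only off the vertical end walls) from the bottom of the rectangle to its top, it never returns to height of the overhang and in particular never falls through the opening into the square stump. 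None of this appears in your argument. Your general device of transporting orbits by the similarities $\phi_i$ is sound and is essentially how the paper reduces the general $k$ to a single case; the error lies entirely in which orbit, of which sub-region, is being transported.
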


\begin{proof}
It suffices to prove the statement for the case $k=1$.  Consider $\xio{1}$, the midpoint of a lower horizontal segment overhanging the square stump of $\omegati{0}$. We know from the proof of the previous lemma that the orbit $\orbitixang{0}{2^{-1}}{\theta^0}$ is an orbit of $\omegati{0}$ that intersects the top of the unit square billiard table at the midpoint by forming a segment with slope $2^{-n}$.  Additionally, $2n$ many reflections are required to reach the point $(2^n+2^{-1},1)$ when unfolding $\orbitixang{0}{2^{-1}}{\theta^0}$ in a tiling of the plane by the unit square.  Since the rectangular region of $\omegati{0}$ is tiled by four squares, each with side-length $\frac{1}{2}$ (in general, side-length $2^{-k}$) and $2^n+2^{-1}\mod{4} = \frac{1}{2}$, because $n\geq 2$, it follows that the reflected-unfolding of $\orbitiang{k}{\theta^0}$ intersects a midpoint of a segment of $\omegati{1}$ removed in the construction $\omegati{2}$.

Since the orbit $\orbitixang{0}{2^{-1}}{\theta^0}$ intersects the base of the square only at the initial basepoint, the reflected-unfolding of such an orbit must do the same in the rectangular region of $\omegati{1}$.

Now, for every $k\geq 1$, let $\xio{k}$ be a midpoint of a segment overhanging a square stump of $\tfractal$ scaled by $2^{-k}$.  An orbit $\orbitiang{k}{\theta^0}$, with the above integer $n$ satisfying $n\geq 2$, will be an orbit that remains in the rectangular region of $\omegati{k}$ containing $\xio{k}$.
\end{proof}

\begin{example}
In Figure \ref{fig:orbitRemainsInRectangle}, we see an example of an orbit of $\omegati{0}$ described in Lemma \ref{thm:intersectsMidpointOfTop}.  
\end{example}

\begin{figure}
\begin{center}
\includegraphics[scale=.4]{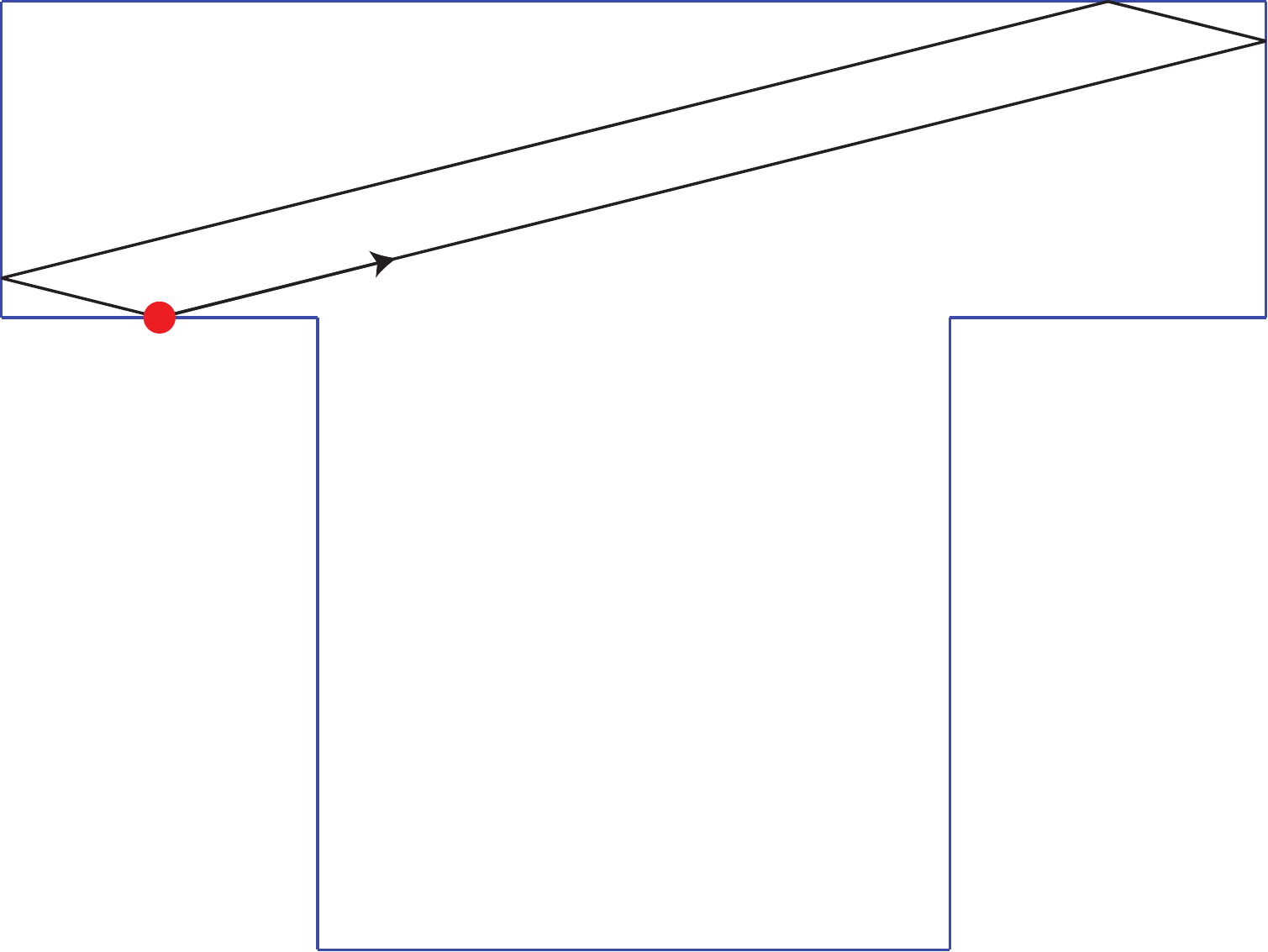}
\end{center}
\caption{This orbit, with the initial condition shown, never exits the rectangular region of $\omegati{0}$.  In other words, it never intersects the base of $\omegati{0}$, but will very clearly escape to $\omegati{1}$. Figure \ref{fig:ConstantOrbitInT1} illustrates how one can construct an eventually constant sequence of compatible orbits.}
\label{fig:orbitRemainsInRectangle}
\end{figure}

\begin{theorem}
\label{thm:eventuallyConstantSequenceOfCompatOrbits}
Let $k$ be a positive integer.  Let $\xio{k}$ be the midpoint of a segment overhanging the square stump of a copy of $\mathscr{T}_0$ of scale $2^{-k}$.  If $\theta^0$ is such that $\tan\theta^0 = 2^{-N}$ for some integer $N\geq 2$, then the sequence of compatible orbits $\seqiang{k}{\theta^0}$ is an eventually constant sequence of compatible periodic orbits.
\end{theorem}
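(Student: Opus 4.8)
The plan is to bootstrap from Lemmas \ref{lem:eventuallyConstantStartingFrom1-2} and \ref{thm:intersectsMidpointOfTop}, exploiting the self-similar passage from $\omegati{k}$ to $\omegati{k+1}$. The crucial observation is that the segment of $\tfraci{k}$ which Lemma \ref{thm:intersectsMidpointOfTop} says the orbit reaches, at its midpoint, is exactly the base of the square stump of the scale-$2^{-(k+1)}$ copy of $\mathscr{T}_0$ appended there in forming $\omegati{k+1}$. Hence the orbit can leave $\omegati{k}$ only by entering the midpoint of the base of a half-scale copy of $\omegati{0}$, which is precisely the initial configuration governed by Lemma \ref{lem:eventuallyConstantStartingFrom1-2}; so once it escapes it is trapped forever inside that small copy together with the rectangular region from which it came.

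First I would fix the initial direction $\theta^0$ with $\tan\theta^0 = 2^{-N}$, $N \ge 2$; the direction $\pi-\theta^0$ is then covered by the left--right symmetry of $\mathscr{T}$, which is built into Definition \ref{def:EventuallyConstanceSequenceOfCompatibleOrbits}. Applying Lemma \ref{thm:intersectsMidpointOfTop} to $\orbitiang{k}{\theta^0}$, that orbit stays inside the rectangular region $\mathscr{R}$ of the scale-$2^{-k}$ copy $C$ of $\mathscr{T}_0$ carrying $\xio{k}$ and meets the top of $\omegati{k}$ exactly at the midpoint $p$ of a segment $\sigma$ removed in forming $\omegati{k+1}$; the reflection-unfolding count in the proof of that lemma (and of Lemma \ref{lem:eventuallyConstantStartingFrom1-2}) also shows the orbit reaches $p$ with slope $\pm 2^{-N}$, so it crosses $\sigma$ transversally. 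Next I would observe that $\sigma$ is the base of the square stump of the appended copy $D$, a copy of $\omegati{0}$ at scale $2^{-(k+1)}$ obtained from $\omegati{0}$ by a composition of the $\phi_i$ (a slope-preserving similarity of ratio $2^{-(k+1)}$), and that $p$ is the midpoint of that base. Dilating by $2^{k+1}$ therefore identifies the part of $\orbitiang{k+1}{\theta^0}$ lying in $D$ with an orbit of $\omegati{0}$ issuing from $(\tfrac12,0)$ with slope $\pm 2^{-N}$, and Lemma \ref{lem:eventuallyConstantStartingFrom1-2} (or its mirror image) then gives that this orbit never leaves $D$'s copy of $\omegati{0}$, never meets a segment removed at any later stage, and touches $\sigma$ only at $p$.

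I would then combine these by decomposing $\orbitiang{k+1}{\theta^0}$ into its sojourns in $\mathscr{R}$ and its excursions into the (at most two) child copies of $\mathscr{T}_0$ appended along the overhang-top midpoints of $C$ that it actually visits. On $\mathscr{R}$ the tables $\omegati{k}$ and $\omegati{k+1}$ differ only by the openings at those midpoints, so each sojourn coincides with the corresponding piece of $\orbitiang{k}{\theta^0}$, which Lemma \ref{thm:intersectsMidpointOfTop} keeps inside $\mathscr{R}$; each excursion is the Lemma \ref{lem:eventuallyConstantStartingFrom1-2} orbit just discussed, which re-enters $\mathscr{R}$ through the same midpoint and in exactly the direction in which the reflected orbit of $\omegati{k}$ would continue. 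Consequently $\orbitiang{k+1}{\theta^0}$ stays within the union of $\mathscr{R}$ with these child copies, a region on which $\omegati{n}$ agrees with $\omegati{k+1}$ for all $n\ge k+1$, and within which it meets no segment removed in building $\omegati{k+2}$---the only candidates being the overhang-top segments of the child copies, and these are excluded by Lemma \ref{lem:eventuallyConstantStartingFrom1-2}. Hence $\orbitiang{n}{\theta^0}$ meets $\omegati{n}$ in the same point set for every $n\ge k+1$, so $\seqiang{k}{\theta^0}$ is eventually constant from level $k+1$; and every orbit in the sequence has the rational slope $2^{-N}$, lives in the square-tiled prefractal $\omegati{n}$, and avoids its corners (by Lemmas \ref{lem:eventuallyConstantStartingFrom1-2} and \ref{thm:intersectsMidpointOfTop}), hence is periodic, which yields the claimed eventually constant sequence of compatible periodic orbits.

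The step I expect to demand the most care is this final decomposition: one must check that every time the orbit leaves a child copy $D$ it does so through the single point $p$ and with the slope needed for Lemma \ref{lem:eventuallyConstantStartingFrom1-2} to apply to its \emph{next} entry into $D$ (and, symmetrically, into the second child), so that the ``splice $\mathscr{R}$-sojourns with $D$-excursions'' picture is globally consistent and there is no roundabout path to a level-$(k+2)$ removed segment. This reduces to tracking the orbit's slope, which stays equal to $\pm 2^{-N}$ under reflection, through the unfoldings used in the proofs of Lemmas \ref{lem:eventuallyConstantStartingFrom1-2} and \ref{thm:intersectsMidpointOfTop}, together with the fact recorded there that each of the relevant orbits meets the base of the square on which it sits only at that base's midpoint.
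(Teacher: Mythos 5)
Your proposal is correct and follows essentially the same route as the paper's proof: apply Lemma \ref{thm:intersectsMidpointOfTop} to send the orbit to the midpoint of the removed segment, recognize that segment as the base of the appended scale-$2^{-(k+1)}$ copy of $\mathscr{T}_0$, and invoke Lemma \ref{lem:eventuallyConstantStartingFrom1-2} to see that the portion of the orbit inside that copy is a fixed, rescaled copy of the orbit from $(\tfrac12,0)$ that never meets a later removed segment. Your write-up is more explicit about the sojourn/excursion splicing and the periodicity of each orbit, but the underlying argument is the paper's.
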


\begin{proof}
It suffices to prove the statement for a particular case.  Let $\xio{0}$ be the midpoint of a segment overhanging the square stump of $\tfraci{0}$.  By Lemma \ref{thm:intersectsMidpointOfTop}, the orbit $\orbitiang{0}{\theta^0}$ intersects the midpoint of a segment removed in the construction of $\omegati{1}$ in such a way that the compatible orbit $\orbitiang{1}{\theta^0}$ enters into the scaled copy of $\mathscr{T}_0$ at such a point and with such a direction that the portion of the orbit contained in the scaled copy of $\mathscr{T}_0$ is, in fact, a scaled copy of the orbit $\orbitixang{0}{2^{-1}}{\theta^0}$ of $\omegati{0}$.  By Lemma \ref{lem:eventuallyConstantStartingFrom1-2}, such an orbit remains fixed in $\omegati{0}$.  Hence, the scaled copy of the orbit $\orbitixang{0}{2^{-1}}{\theta^0}$ of $\omegati{0}$ will coincide with part of the orbit $\orbitiang{1}{\theta^0}$, meaning that $\seqiang{k}{\theta^0}$, with $\theta^0$ such that $\tan\theta^0 = 2^{-N}$ for some $N\geq 2$, is an eventually constant sequence of compatible periodic orbits.
\end{proof}

As discussed in the caption of Figure \ref{fig:orbitRemainsInRectangle}, Figure \ref{fig:ConstantOrbitInT1} illustrates the construction of an eventually constant sequence of compatible periodic orbits.  Such a sequence of compatible periodic orbits is guaranteed to exist by Theorem \ref{thm:eventuallyConstantSequenceOfCompatOrbits}.

\begin{figure}
\begin{center}
\includegraphics[scale = .4]{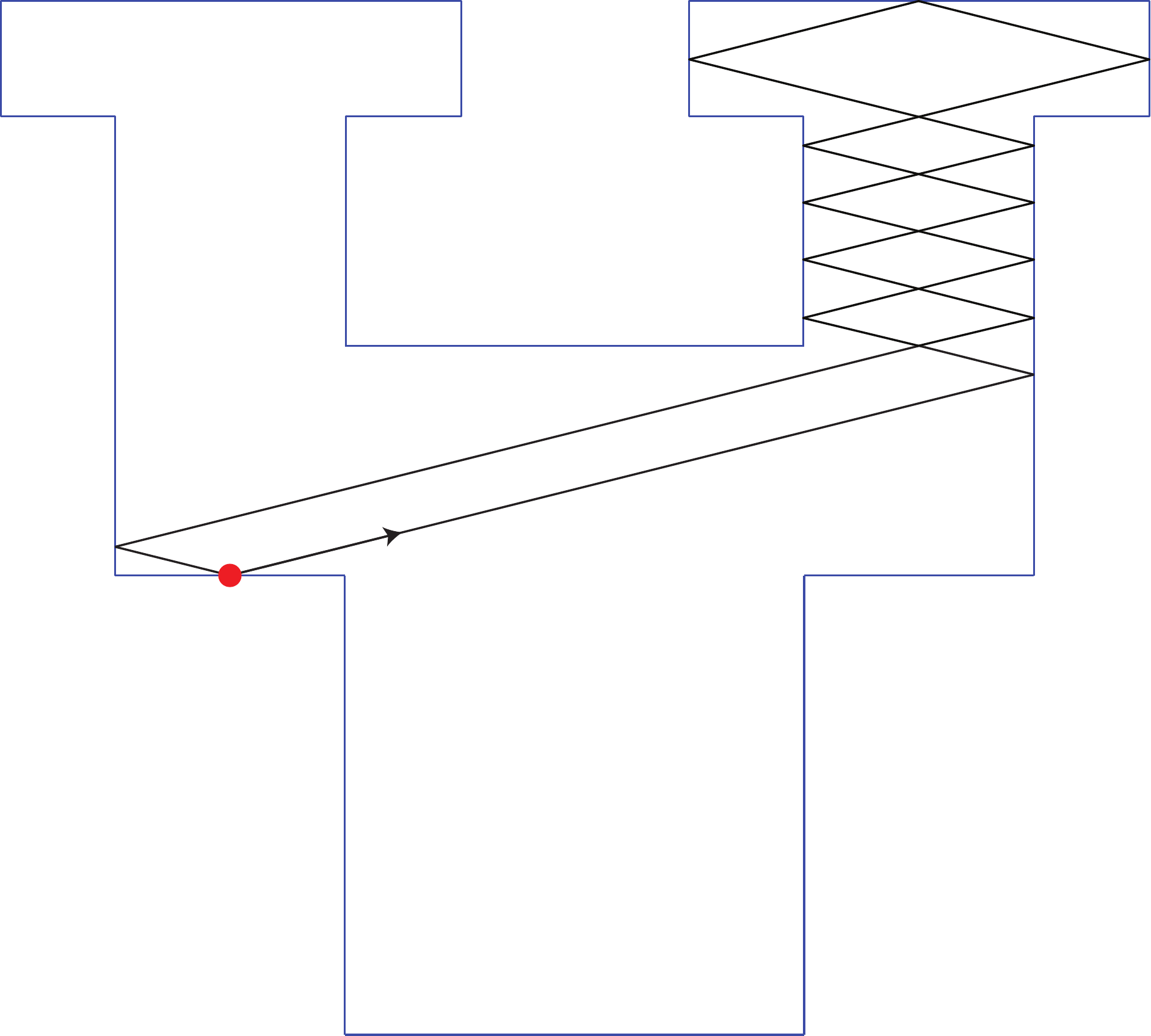}
\end{center}
\caption{Shown here is an orbit of an eventually constant sequence of compatible periodic orbits $\seqi{0}$.  More importantly, for every $n\geq 1$, we have that $\orbitiang{n}{\theta^0} = \orbitiang{1}{\theta^0}$.}
\label{fig:ConstantOrbitInT1}
\end{figure}

\section{Limits of particular sequences of compatible orbits}

\label{sec:NontrivialPathsInTheTFractalBilliardTable}

In \S\ref{sec:SequencesOfCompatiblePeriodicOrbits}, we showed that there are sequences of compatible periodic orbits that are eventually constant; see Theorem \ref{thm:eventuallyConstantSequenceOfCompatOrbits} and Figure \ref{fig:ConstantOrbitInT1}. In this section, we will focus on determining nontrivial paths and periodic orbits of the $T$-fractal billiard.  Specifically, we will be focusing on determining periodic orbits and nontrivial paths from sequences of compatible orbits with certain initial conditions (recall that a sequence of compatible orbits is determined by the initial condition of the first orbit in the sequence of compatible orbits).  Such initial conditions will be of the form $\xoo \neq m2^{-l}$, with $m$ and $l$ being positive integers and $\theta^0$ such that $\tan\theta^0 = \pm \frac{1}{p}$, where $p$ is an odd positive integer.  Then, the initial condition $(\xoo,\theta^0)$ satisfies Theorem \ref{thm:sequenceOfCompatibleNonsingularOrbitsPeriodicIfRational}, meaning that $\seqi{0}$ is a sequence of compatible periodic orbits.  What we will now see is that $\seqi{0}$ is not an eventually constant sequence of compatible periodic orbits.  More to the point, we will see that for every $n\geq 0$, $\tau_n <\infty$, $\tau_n<\upsilon_n$ and $\orbitixang{n}{\xio{n}}{\theta^0}_{\tau_n} = \ntpixang{n}{\xio{n}}{\theta^0}$ will be part of a nontrivial path of $\omegat$, where such a nontrivial path reaches a rational elusive point if and only if $\xoo$ is a rational value.  In \S\ref{sec:aNontrivialPathInAnIrrationalDirection}, we provide an example of a sequence of compatible singular orbits that yields a nontrivial path.  We begin with a specific example of the former as a motivation for some of the following results.


\begin{example}
\label{exa:nontrivialPaths1-3}
Consider $\xio{0} =  \frac{1}{3}$, along the base of $\omegati{0}$, and $\theta^0$ such that $\tan{\theta^0} = \frac{1}{3}$. Then, there exists a nonnegative integer $n$ such that $\tau_n<\infty$, $\tau_n<\upsilon_n$ and $\xii{n}{\tau_n}$ is a distance of $(3\cdot 2^{n})^{-1}$ away from a corner and the direction of the orbit prior to collision is $\theta^0$.  That is, heuristically speaking, there is a way for the billiard ball to reach ``1/3'' on a scaled segment.   Consequently, the path  $\ntpixang{n}{\frac{1}{3}}{\theta^0}$ derived from $\orbitixang{n}{\frac{1}{3}}{\theta^0}_{\tau_n}$ can be scaled by $2^{-n}$, and appended to $\xii{n}{\tau_n}$ in order to produce the next $\tau_n$-many segments in the path derived from the orbit $\orbitixang{2n}{\frac{1}{3}}{\theta^0}$ of $\omegati{2n}$.  Continuing in this fashion ad infinitum, we can show that such a procedure produces a nontrivial path of $\omegat$; see the image on the left in Figure \ref{fig:T-fractalNontrivialPaths}.  A similar construction produces a nontrivial path in the direction $\pi-\theta^0$ starting from $\frac{1}{3}$, as shown on the right in Figure \ref{fig:T-fractalNontrivialPaths}.
\end{example}

\begin{figure}
\begin{center}
\includegraphics[scale=.45]{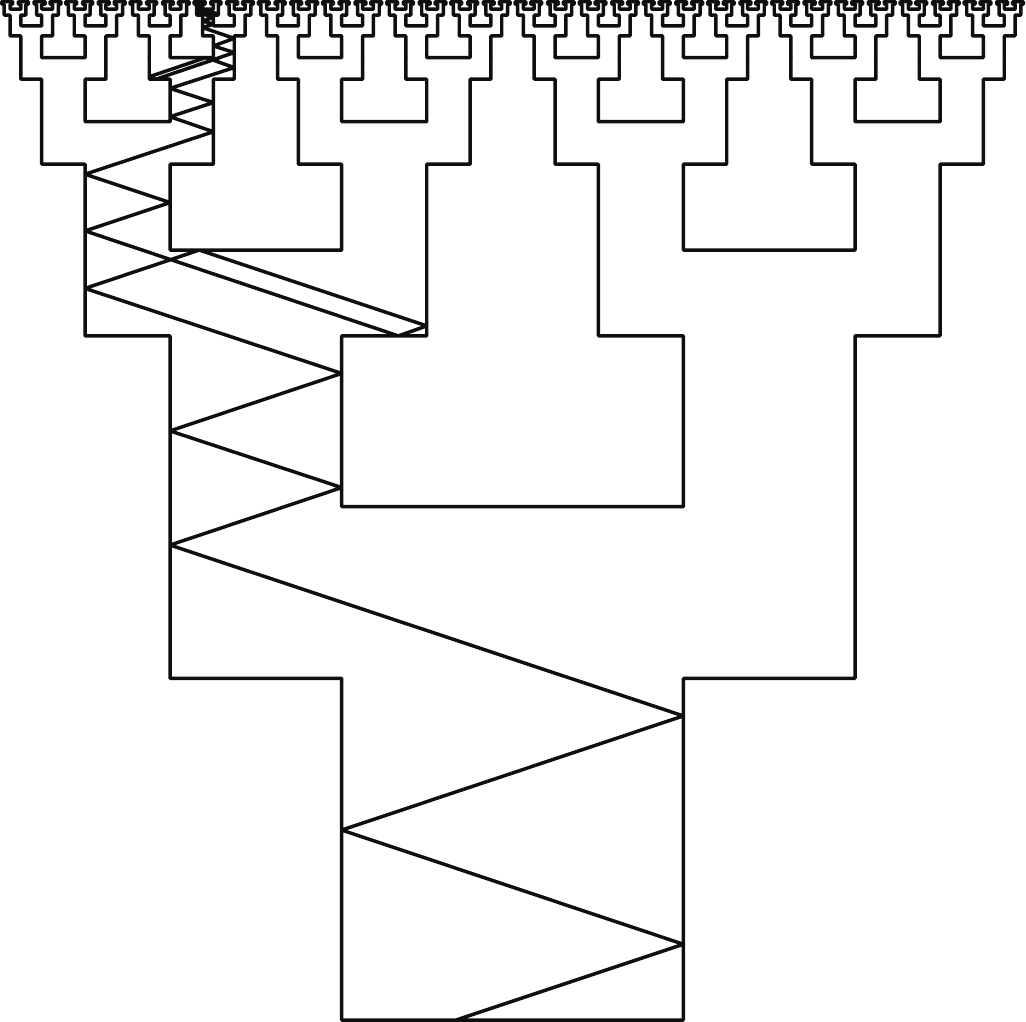}
\includegraphics[scale=.45]{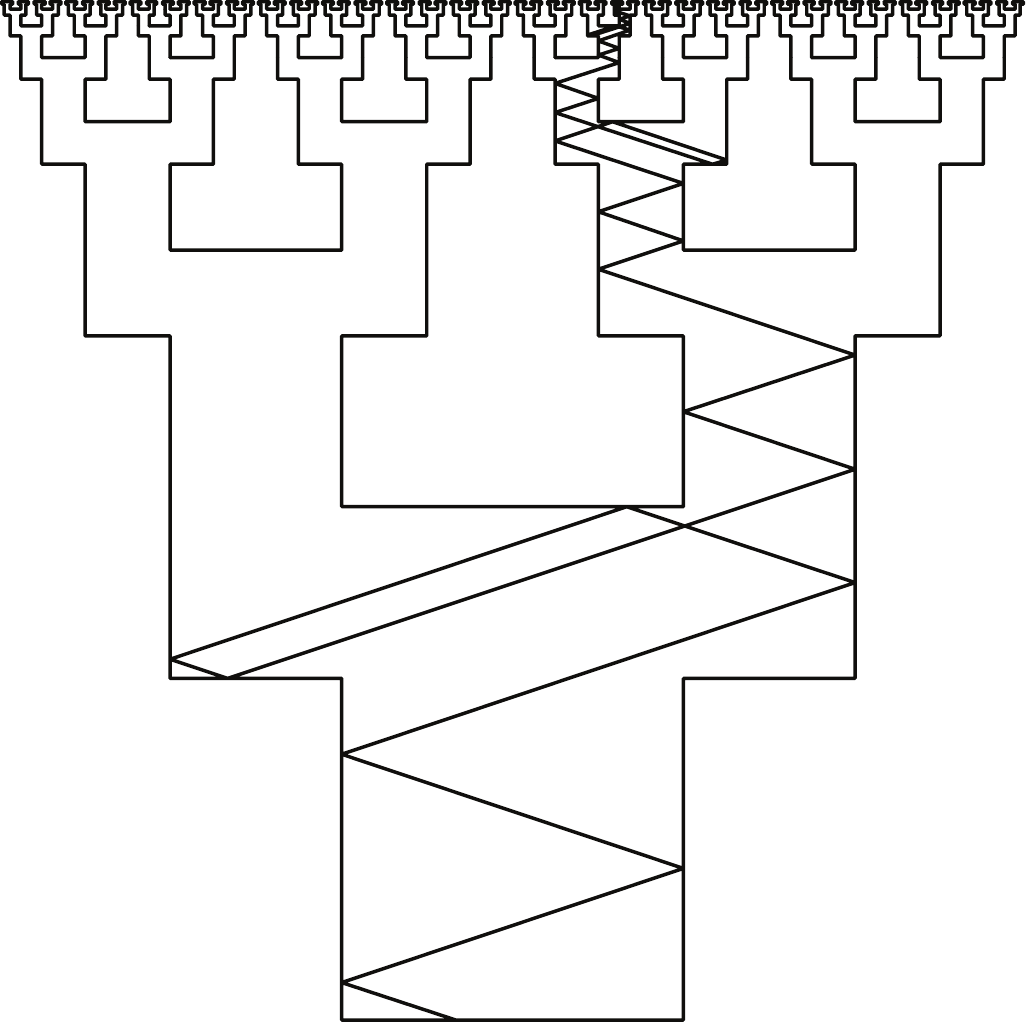}
\end{center}
\caption{Two nontrivial paths reaching two distinct elusive points of $\omegat$. As discussed in Example \ref{exa:nontrivialPaths1-3}, each nontrivial path can be constructed by way of repeatedly scaling finitely many segments and appending to such segments the scaled copy.}
\label{fig:T-fractalNontrivialPaths}
\end{figure}

\begin{lemma}
\label{lem:orbitsInStump}
Consider a unit-square billiard table $\Omega(S)$.  Let $p\geq 1$ be an odd integer and let $x^0 \neq m2^{-l}$, for any positive integers $l,m$, be a point on the base of $\Omega(S)$.    Let $\theta^0\in (0,\pi)$ be such that $\tan\theta^0=\frac{1}{p}$ \emph{(}resp., $\tan \theta^0 = -\frac{1}{p}$\emph{)}.  Then, any orbit $\mathscr{O}_S(x^0,\theta^0)$ with $x^0\in (0,1/2)$ on the base of $S$ necessarily intersects the boundary $S$ at $(x^{p+1},1)\in (1/2,1)\times\{1\}$. Similarly, if $x^0\in (1/2,1)$ on the base of $S$, then the orbit $\mathscr{O}_S(x^0,\theta^0)$ intersects the boundary $S$ at $(x^{p+1},1)\in (0,1/2)\times\{1\}$.  In the first \emph{(}resp., second\emph{)} case, the direction in which the pointmass is moving is $\pi - \arctan 1/p$ \emph{(}resp., $\arctan 1/p$\emph{)}.  Moreover, in either case, $x^{p+1}$ is a horizontal distance away from the left-hand side of $\Omega(S)$ given by $1-x^0$.
\end{lemma}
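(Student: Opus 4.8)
The plan is to reduce everything to the standard unfolding of billiard orbits in the unit square $\Omega(S)$, where a trajectory with initial point $(x^0,0)$ and slope $1/p$ becomes a straight line $y = (1/p)(x - x^0)$ in the plane tiled by reflected copies of $S$. First I would note that since $p$ is odd and $x^0 \neq m2^{-l}$, the line through $(x^0,0)$ with slope $1/p$ avoids all lattice points of the grid (indeed, it avoids every point of the form $(\text{integer}, \text{integer})$, since hitting such a point would force $x^0$ to be rational with odd denominator dividing a power related to $p$ — more simply, $y$-coordinate $1$ is reached at $x = x^0 + p$, and the only horizontal grid lines crossed before that are at heights $0 < y < 1$... wait, no: the line only rises by $1$ total before reaching $y=1$). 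The cleaner observation: as $x$ runs from $x^0$ to $x^0 + p$, the quantity $y = (x-x^0)/p$ runs from $0$ to $1$, so the unfolded line crosses exactly the vertical grid lines $x = \lceil x^0\rceil, \lceil x^0\rceil+1, \dots$ up to the largest integer $< x^0 + p$, and it first reaches height $y=1$ at horizontal position $x^0 + p$. The number of vertical lines crossed in the open interval $(x^0, x^0+p)$ is exactly $p$ (since $x^0$ is not an integer and $x^0 + p$ is not an integer, the count of integers strictly between them is $p$). Hence the orbit reflects off the left/right walls $p$ times before reaching the top, so the $(p+1)$-st collision is the one on the top edge: $x^{p+1}$ lies on $\{1\}\times$ (top).

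Next I would fold the unfolded endpoint back into $[0,1]$. The endpoint in the plane is $(x^0 + p,\, 1)$. Reflecting-folding the first coordinate modulo $2$ and accounting for the reflections, the $x$-coordinate of the collision point in $S$ is obtained from $x^0 + p \pmod 2$ with the usual tent-map folding. Since $p$ is odd, $x^0 + p \equiv x^0 + 1 \pmod 2$, and folding $x^0 + 1$ (with $0 < x^0 < 1$, so $1 < x^0 + 1 < 2$) into $[0,1]$ via $t \mapsto 2 - t$ on $[1,2]$ gives $x^{p+1} = 2 - (x^0 + 1) = 1 - x^0$. This simultaneously proves the claim that the horizontal distance of $x^{p+1}$ from the left-hand side of $\Omega(S)$ is $1 - x^0$, and it shows the membership statement: if $x^0 \in (0,1/2)$ then $1 - x^0 \in (1/2,1)$, and if $x^0 \in (1/2,1)$ then $1 - x^0 \in (0,1/2)$. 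For the direction after the $(p+1)$-st collision: the unfolded direction is constant (slope $1/p$, moving in the direction $\arctan(1/p)$), and folding across an odd number of vertical walls reverses the horizontal component of the velocity while the orbit is about to reflect off the horizontal top edge; tracking the parity shows the outgoing direction at $(x^{p+1},1)$ is $\pi - \arctan(1/p)$ in the first case (and $\arctan(1/p)$ in the mirror-image case $\tan\theta^0 = -1/p$, handled identically after the reflection $x \mapsto 1-x$, or $\theta^0 \mapsto \pi - \theta^0$).

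Finally, the resp.\ (negative slope) case follows by the symmetry $x \mapsto 1 - x$ of the unit square, which conjugates slope $1/p$ to slope $-1/p$ and swaps the roles of the two halves of the base and of the top; I would just remark that this symmetry carries the already-proved statement to the one claimed. The main obstacle — really the only place requiring care — is the bookkeeping of \emph{parities}: getting the count of wall-reflections to be exactly $p$ (which hinges on $x^0 \notin \Z$, guaranteed since $x^0 \neq m2^{-l}$ rules out $x^0 = 1$ and $x^0 \in (0,1)$ rules out $x^0 = 0$), and then correctly reading off both the folded coordinate $1 - x^0$ and the outgoing direction from the parity of $p$. Everything else is the routine unfolding argument, so I would present the parity computation carefully and treat the rest briskly.
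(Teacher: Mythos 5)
Your unfolding argument is correct, and in fact the paper states Lemma \ref{lem:orbitsInStump} with no proof at all (only the companion Lemma \ref{lem:orbitsInTop} receives a one-line justification ``by inspection'' via the same tiling-by-unit-squares idea), so your parity bookkeeping --- the unfolded line $y=(x-x^0)/p$ crosses exactly the $p$ vertical grid lines $x=1,\dots,p$ before reaching height $1$ at $x^0+p$, whence the $(p+1)$-st collision is on the top edge at folded coordinate $2-(x^0+1)=1-x^0$ with the horizontal velocity reversed an odd number of times --- supplies precisely the details the paper leaves implicit, and the reduction of the $\tan\theta^0=-1/p$ case to the other via $x\mapsto 1-x$ is also sound. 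One small terminological correction: the direction $\pi-\arctan(1/p)$ asserted in the lemma is the direction in which the pointmass is travelling as it \emph{arrives} at $(x^{p+1},1)$ (which is what matters later, when that segment is removed in the next prefractal approximation and the ball passes through rather than reflecting), not the ``outgoing'' direction after reflection off the top, so that word should be adjusted even though the parity computation behind it is right.
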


\begin{example}
Illustrated in Figure \ref{fig:illustratingOrbitHittingTopOfSquare} is an orbit of the square billiard table that begins at $x^0 = 1/3$ in an initial direction of $\theta^0$ such that $\tan\theta^0 = 1/3$.  Such an orbit exhibits the behavior described in Lemma \ref{lem:orbitsInStump}.
\end{example}

\begin{figure}
	\begin{center}
		\includegraphics{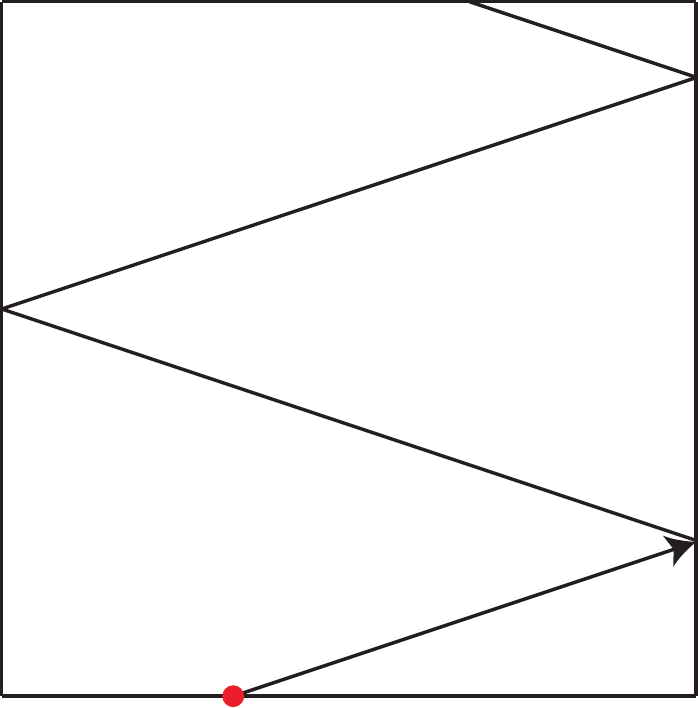}
	\end{center}
	\caption{An orbit of the square billiard table.  This figure illustrates an example of an orbit described in Lemma \ref{lem:orbitsInStump}.  The initial condition of the orbit is $x^0 = \frac{1}{3}$ and $\theta^0 = \arctan 1/3$.}
	\label{fig:illustratingOrbitHittingTopOfSquare}
\end{figure}



\begin{lemma}
\label{lem:orbitsInTop}
Let $p\geq 1$ be an odd integer. Consider a rectangular billiard table $\Omega(R)$ measuring $4$ units by $1$ unit.  Let $\theta^0\in (0,\pi)$ be such that $\tan(\theta^0)=\frac{1}{p}$ and $x^0 \neq 0,1,2,3,4$  lying on the base of $R$.  Then:

\begin{itemize}
\item{An orbit $\mathscr{O}(x^0,\theta^0)$ with $1<x^0<2$}
\begin{itemize}
\item{intersects the segment $\overline{(3,1),(4,1)}$ at a point $x^k$ if $p\equiv 1 \mod 8$ or $p\equiv 3\mod 8$ at an angle $\theta^0$;}
\item{intersects the segment $\overline{(0,1),(1,1)}$ at a point $x^k$ if $p\equiv 5 \mod 8$ or $p\equiv 7 \mod 8$ at an angle $\theta^0$. }
\end{itemize}

\item{An orbit $\mathscr{O}(x^0,\theta^0)$ with $2<x^0<3$}
\begin{itemize}
\item{intersects the segment $\overline{(3,1),(4,1)}$ at a point $x^k$ if $p\equiv 1 \mod 8$ or $p\equiv 3\mod 8$  at an angle $\pi - \theta^0$;}
\item{intersects the segment $\overline{(0,1),(1,1)}$ at a point $x^k$ if $p\equiv 5\mod 8$ or $p\equiv 7 \mod 8$ at an angle $\pi - \theta^0$.}
\end{itemize}
\end{itemize}

\noindent Similarly, let $\theta^0\in (0,\pi)$ be such that $\tan(\theta^0)=-\frac{1}{p}$, with $p=2k+1$ for some nonnegative integer $k$ and $x^0 \neq 0,1,2,3,4$  lying on the base of $R$.  Then:
\begin{itemize}
\item{An orbit $\mathscr{O}(x^0,\theta^0)$ with $1<x^0<2$}
\begin{itemize}
\item{intersects the segment $\overline{(3,1),(4,1)}$ at a point $x^k$ if $p\equiv 5 \mod 8$ or $p\equiv 7\mod 8$ at an angle $\theta^0$;}
\item{intersects the segment $\overline{(0,1),(1,1)}$ at a point $x^k$ if $p\equiv 1 \mod 8$ or $p\equiv 3 \mod 8$ at an angle $\theta^0$.}
\end{itemize}

\item{An orbit $\mathscr{O}(x^0,\theta^0)$ with $2<x^0<3$}
\begin{itemize}
\item{intersects the segment $\overline{(3,1),(4,1)}$ at a point $x^k$ if  $p\equiv 5 \mod 8$ or $p\equiv 7 \mod 8$  at an angle $\pi - \theta^0$;}
\item{intersects the segment $\overline{(0,1),(1,1)}$ at a point $x^k$ if $p\equiv 1\mod 8$ or $p\equiv 3 \mod 8$ at an angle $\pi - \theta^0$.}
\end{itemize}
\end{itemize}
\end{lemma}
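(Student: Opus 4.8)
The plan is to linearize the billiard flow on $\Omega(R)$ by the standard reflective unfolding and then to extract the stated dichotomies from elementary arithmetic modulo $8$.

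First I would unfold $\Omega(R)=[0,4]\times[0,1]$ in the tiling of the plane by its reflected copies, so that $\mathscr{O}(x^0,\theta^0)$ becomes a genuine straight line of slope $\pm 1/p$ issuing from $(x^0,0)$. Since $x^0$ lies on the base and $\lvert\tan\theta^0\rvert=1/p$, this line rises by one unit over a horizontal run of exactly $p$; and because the line moves strictly upward (both $\theta^0=\arctan 1/p$ and $\theta^0=\pi-\arctan 1/p$ give an upward direction of motion), it meets a top edge of the tiling — a horizontal line of odd integer height — precisely at the horizontal displacements $(2k+1)p$, $k\geq 0$, and between consecutive such times it crosses only the vertical reflection lines $x\in 4\Z$. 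The unfolded tiling has horizontal period $8$ (an original copy together with its mirror image), and $p$ is odd; hence the residue $(2k+1)p \bmod 8$ ranges over the same set of residues as $p \bmod 8\in\{1,3,5,7\}$, and the data we are after — which quarter of a top edge is reached, and with which of the two admissible directions $\theta^0$, $\pi-\theta^0$ — depend only on $p\bmod 8$, on the sign of the slope, and on whether $x^0\in(1,2)$ or $x^0\in(2,3)$.

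Next I would use the reflection symmetry $x\mapsto 4-x$ of $\Omega(R)$: it carries slope $1/p$ to $-1/p$, interchanges the intervals $(1,2)$ and $(2,3)$, interchanges the outer top quarters $\overline{(0,1),(1,1)}$ and $\overline{(3,1),(4,1)}$, and interchanges the directions $\theta^0$ and $\pi-\theta^0$. This collapses the sixteen assertions into, say, the four cases of positive slope with $x^0\in(1,2)$. In those cases, writing $X_k=x^0+(2k+1)p$, the $(k{+}1)$st visit to a top edge occurs, in $\Omega(R)$, at the point obtained by folding $X_k$ into $[0,4]$ — that is, at $X_k-4\lfloor X_k/4\rfloor$ if $\lfloor X_k/4\rfloor$ is even and at $4\lceil X_k/4\rceil-X_k$ if it is odd — and the direction of motion there is $\theta^0$ exactly when $\lfloor X_k/4\rfloor$ is even (each crossing of a line in $4\Z$ flips the horizontal component of the velocity, while the crossings of horizontal lines, being even in number before any given top edge, do not affect it). Since the relevant quantities are periodic in $k$ with period $4$, for each residue $p\bmod 8$ one locates among finitely many values of $k$ the first visit whose folded coordinate lands in an outer quarter of the top edge, reads off its direction from the parity of $\lfloor X_k/4\rfloor$, and verifies agreement with the statement; the remaining twelve cases then follow by the symmetry above.

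The main obstacle is the bookkeeping. One must track the composition of horizontal flips consistently across the relevant top-edge visits and — most delicately — identify which visit is the point $x^k$ named in the lemma, namely the first visit that lands on an \emph{outer} quarter $\overline{(0,1),(1,1)}$ or $\overline{(3,1),(4,1)}$ rather than on the middle segment $\overline{(1,1),(3,1)}$; it is prudent to cross-check the resulting assignment against Figure \ref{fig:illustratingOrbitHittingTopOfSquare} and the figures of \S\ref{sec:NontrivialPathsInTheTFractalBilliardTable}. One must also check throughout that the hypothesis $x^0\neq 0,1,2,3,4$ (and its consequences, e.g.\ $X_k\notin 4\Z$ and $X_k\notin 2\Z+1$) keeps the unfolded line away from every lattice corner, so that the orbit is well defined up to the time in question.
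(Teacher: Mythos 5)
Your proposal is correct and takes essentially the same route as the paper: the paper's entire proof is the observation that the translation surface $\mathscr{S}(R)$ is tiled by $8$ unit squares in the horizontal direction, with all the cases then settled ``by inspection.'' Your unfolding of the orbit to a straight line, the reduction of the case analysis to the residue of $p$ modulo $8$, and the concluding finite verification are a fleshed-out version of exactly that inspection (the $x\mapsto 4-x$ symmetry you use to cut the sixteen cases down to four is a welcome economy that the paper does not make explicit).
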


\begin{proof}
This follows by inspection in each case and by recognizing the fact that the translation surface $\mathscr{S}(R)$ is tiled by $8$ unit squares in the horizontal direction; see \cite{MasTa} for a detailed discussion of translation surfaces and \cite{LapNie3,LapNie4} for a brief introduction. 
\end{proof}

Consider $\xoo\in I=[0,1]$, $\xoo\neq 0,1$.  Partitioning $I$ into $[0,1/2)$ and $[1/2,1]$, and supposing $\xoo\neq m2^{-l}$ for any positive integers $l,m$, then $\xoo$ is either in $(0,1/2)$ or $(1/2,1)$.  Suppose we write $\xoo$ in terms of its binary expansion (which is an infinite binary expansion, since this is not a dyadic rational).  If $\xoo\in (0,1/2)$, then rescaling $\xoo$ by $2$ results in $\xoo$ shifted to the left by one digit.  For example, $1/3=0.\overline{01}$ scaled by $2$  is $2/3$, which has the binary representation $0.\overline{10}$.  If $\xoo\in (1/2,1)$, then $(2\xoo)\mod 1$ is equal to the mantissa of the shift of $\xoo$ to the left by one digit.  For example, $5/9 = 0.1\overline{000111}$ and $(10/9\mod 1) = 1/9$, which has a binary representation $0.\overline{000111}$.

Now consider $\xoo\in I$ and $\theta^0 = (0,\pi/2)$.  Then, we say that the origin, relative to the direction $\theta^0$, is $(0,0)$.    More succinctly, $(0,0)$ is the \textit{relative origin} of $\xoo$.    If, on the other hand $\xoo\in I$ and $\theta^0 = (\pi/2,\pi)$, then we say that the relative origin of $\xoo$ is $(1,0)$.  Specifying a basepoint $\xoo\neq m2^{-l}$ on the base of $\omegati{0}$ and an angle, it is clear that we can always specify a relative origin in a well-defined manner.  Suppose $\seqiang{i}{\theta^0}$ is a sequence of compatible orbits such that, for every $n\geq 0$, there exists a finite first escape time $\tau_n$ (in the sense of Definition \ref{def:firstEscapeTime}). At each $\xii{n}{\tau_n}$, the direction of motion prior to collision is given by either $\theta^0$ or $\pi-\theta^0$, neither of which describes a vertical direction of flow. Hence, at each $\xii{n}{\tau_n}$, a relative origin can be described in a well-defined manner; see Figure \ref{fig:relativeOrigin}.

We introduce some related notation.  Suppose $\xoo$ is a point on the base of $\omegati{0}$.  Then $\xoo$ has a binary expansion and we represent the first $k$-many digits of this expansion by $(\xoo)_k$, $k\geq 1$.  In the sequel, we will relate the distance $\xii{n}{\tau_n}$ is from its relative origin to the the finite binary expansion $(\xoo)_{n+1}$.  Moreover, as we indicated above, $\xoo$ may be represented as either a rational value or as a binary expansion.  It will be clear from the context which representation of $\xoo$ we are using.  For example, when we compute $\xoo - (\xoo)_n$, we are supposing that $\xoo$ is written as an infinite binary expansion.  Additionally, the notation $(\xoo)_{k,k+1}$ represents the $(k+1)$-th digit in the binary expansion of $\xoo$ preceded by $k$ zeros.

\begin{figure}
\begin{center}
\includegraphics[scale=.36]{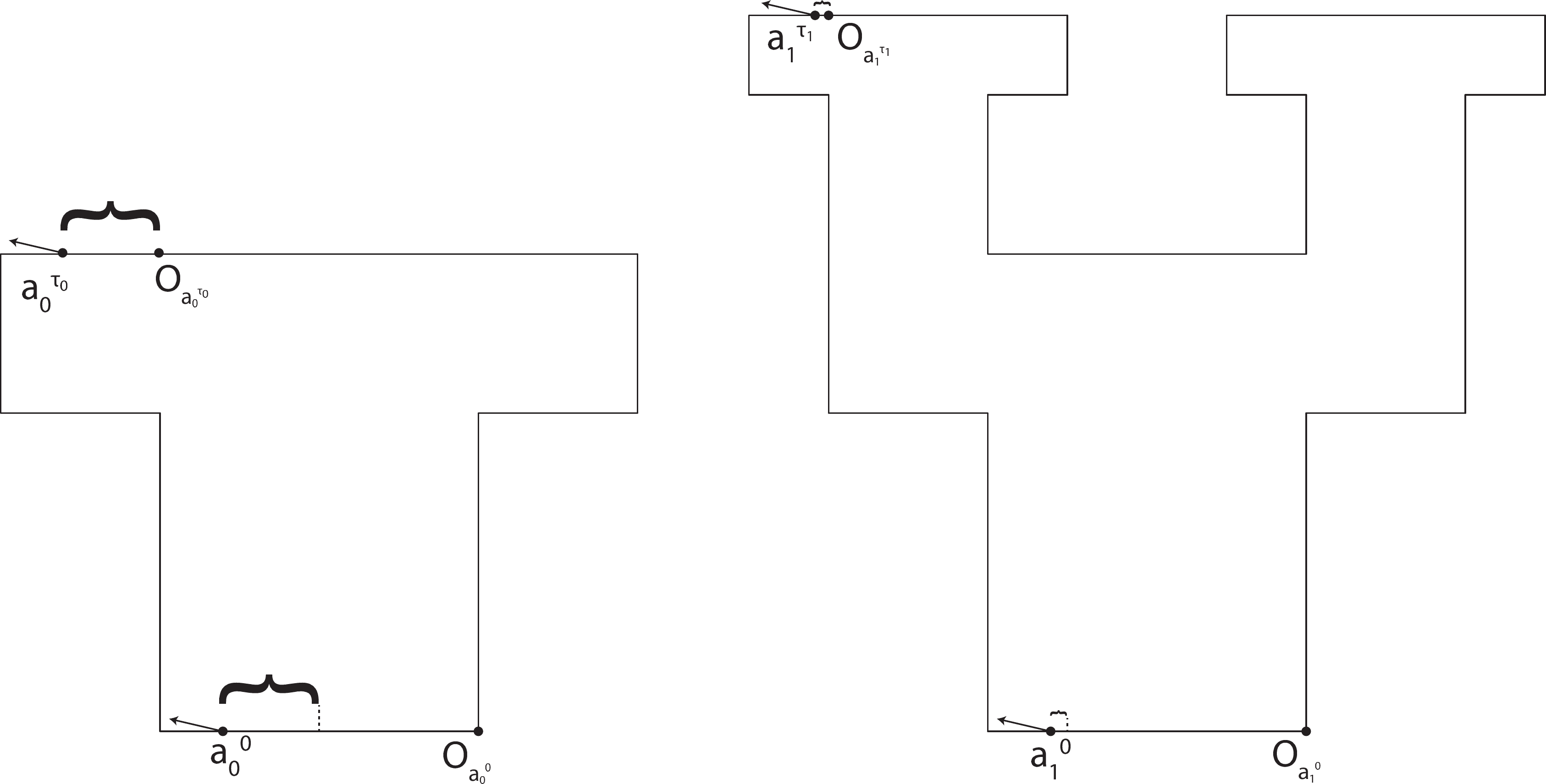}
\end{center}
\caption{In the figure on the left, we see the relative origin of the initial basepoint $a_0^0$, denoted by $O_{a_0^0}$.  The relative origin of $a_0^{\tau_0}$ is $O_{a_0^{\tau_0}}$; it is given on the right.  We see that for $i=0,1$, $a_i^{\tau_i}$ is a distance away from its relative origin given by $(1-\xoo)-(1-\xoo)_{i+1}$.} 
\label{fig:relativeOrigin}
\end{figure}

Lemmas \ref{lem:orbitsInStump} and \ref{lem:orbitsInTop} imply part of the following lemma.

\begin{lemma}
\label{lem:stumpAndTopCombined}
Let $p\geq 1$ be an odd integer.  Consider $\omegati{0}$ and an orbit $\orbiti{0}$ such that $\xoo$ is on the base of $\omegati{0}$, $\xio{0} \neq m2^{-l}$ for any positive integers $m,l$ and $\theta^0_0$ such that $\tan\theta_0^0 = \pm \frac{1}{p}$.  Then, the first escape time $\tau_0$ of $\orbiti{0}$ is finite and less than the first return time $\upsilon_0$.  

Moreover, if $\tan\theta^0_0 = \frac{1}{p}$, then $\xii{0}{\tau_0}$ is a distance of $\xoo - (\xoo)_{1}$ away from its relative origin $O_{\xii{0}{\tau_0}}$.  If $\tan\theta^0_0 = -\frac{1}{p}$, then $\xii{0}{\tau_0}$ is a distance of $(1 - \xoo)-(1-\xoo)_{1}$ away from its relative origin $O_{\xii{0}{\tau_0}}$. 
\end{lemma}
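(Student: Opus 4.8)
The plan is to reduce the claim about $\omegati{0}$ to the two already-established lemmas about flat billiard tables, by exploiting the fact that $\omegati{0}$ is the union of the unit square (the ``square stump'') and the $2\times\tfrac12$ rectangle sitting on top of it. First I would dispose of the finiteness of $\tau_0$: an orbit launched from $\xoo$ on the base with $\tan\theta^0_0 = \pm\tfrac1p$ travels with very small slope, so it cannot close up inside the square before reaching its top edge; by Lemma~\ref{lem:orbitsInStump} (applied to $\Omega(S)$, the unit square), the orbit crosses the top of the square at a point $(x^{p+1},1)$ with $p+1$ collisions used up, arriving with direction $\pi-\arctan\tfrac1p$ or $\arctan\tfrac1p$ according to the sign. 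That top edge of the square is, in $\omegati{0}$, the base of the $4\times 1$-proportioned rectangle (after rescaling by $2$, matching the side-length conventions), so I would then invoke Lemma~\ref{lem:orbitsInTop} to follow the orbit through the rectangular region until it meets one of the two outer thirds of the rectangle's top edge — precisely the segments $\overline{(3,1),(4,1)}$ or $\overline{(0,1),(1,1)}$, which after rescaling are exactly the segments of $\tfraci{0}$ removed in passing to $\tfraci{1}$. This shows $\tau_0<\infty$. The inequality $\tau_0<\upsilon_0$ then follows because the orbit reaches a ``to-be-removed'' segment on the top edge of $\omegati{0}$ strictly before it can return to the base $\sigma$ containing $\xoo$: the only way back to the base would require re-traversing the square, but by the time the orbit is in the rectangular region it has already hit a removed segment.

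Next I would track the horizontal coordinate carefully to pin down the distance from the relative origin. The key bookkeeping fact is the last clause of Lemma~\ref{lem:orbitsInStump}: in the unit square, the exit point $x^{p+1}$ is a horizontal distance $1-x^0$ from the left-hand side, i.e.\ the map $x^0\mapsto x^{p+1}$ is the reflection $x\mapsto 1-x$ on $[0,1]$. Combining this with Lemma~\ref{lem:orbitsInTop}, which in effect tells us which outer third of the (rescaled, $4$-wide) rectangle's top the orbit hits and with what direction, I would compute the position of $\xii{0}{\tau_0}$ within that removed segment. The claim is that this position, measured from the appropriate endpoint of the removed segment (the ``relative origin'' $O_{\xii{0}{\tau_0}}$, defined by whether the direction of motion prior to collision is $\theta^0_0$ or $\pi-\theta^0_0$), equals $\xoo-(\xoo)_1$ in the $\tan\theta^0_0=\tfrac1p$ case and $(1-\xoo)-(1-\xoo)_1$ in the $\tan\theta^0_0=-\tfrac1p$ case. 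Here $(\xoo)_1$ is the first binary digit of $\xoo$ divided by $2$, so $\xoo-(\xoo)_1$ is exactly $\xoo$ if $\xoo\in(0,\tfrac12)$ and $\xoo-\tfrac12$ if $\xoo\in(\tfrac12,1)$ — i.e.\ the ``left-shift'' operation on the binary expansion described in the paragraph before Lemma~\ref{lem:stumpAndTopCombined}, read modulo the fractional scale. I would verify the two cases by a short case analysis on whether $\xoo\in(0,\tfrac12)$ or $(1/2,1)$ and on $p\bmod 8$, using the correspondences already spelled out in Lemma~\ref{lem:orbitsInTop}; the reflection $x\mapsto 1-x$ from the square is what converts ``distance $\xoo$ from the left'' into ``distance $1-\xoo$ from the left'', explaining why the $-\tfrac1p$ direction sees $1-\xoo$ in place of $\xoo$.

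I expect the main obstacle to be purely the coordinate bookkeeping in the second half: matching the $8$-square-wide unfolded translation surface $\mathscr{S}(R)$ of Lemma~\ref{lem:orbitsInTop} against the geometry of $\omegati{0}$, keeping straight the rescaling by $2$ between ``unit-square coordinates'' and ``$\omegati{0}$ coordinates,'' and making the definition of the relative origin (which endpoint of the removed segment one measures from, depending on whether the incoming direction is $\theta^0_0$ or $\pi-\theta^0_0$) line up correctly with the sign conventions in the two lemmas — including, as Figure~\ref{fig:relativeOrigin} suggests, the subtle point that for the negative-slope case one naturally measures from the right, which is why $1-\xoo$ appears. Everything else — finiteness of $\tau_0$ and $\tau_0<\upsilon_0$ — follows essentially immediately from Lemmas~\ref{lem:orbitsInStump} and~\ref{lem:orbitsInTop} together with the observation that the removed segments of $\tfraci{0}$ are exactly the outer thirds of the top edge of $\omegati{0}$. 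Since the statement says ``it suffices to prove for a particular case''–style reductions are available (scaling by $2^{-k}$ reduces any copy of $\tfraci{0}$ to the base case), I would present the argument only for the base $\omegati{0}$ and remark that the general case follows by applying the IFS maps $\phi_1,\phi_2$.
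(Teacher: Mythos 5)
Your proposal follows essentially the same route as the paper's proof: finiteness of $\tau_0$ and the inequality $\tau_0<\upsilon_0$ are read off by chaining Lemmas \ref{lem:orbitsInStump} and \ref{lem:orbitsInTop}, and the distance claim is obtained by tracking the exit point under the reflection-unfolding of the orbit into a tiling by half-side squares, with the same case split on $\xoo\in(0,\tfrac12)$ versus $\xoo\in(\tfrac12,1)$. The only quibble is that the removed segments are the outer \emph{quarters} (each of length $1$ in the $4\times 1$ rescaling), not thirds, of the top edge of the rectangular region --- which is exactly what the segments $\overline{(0,1),(1,1)}$ and $\overline{(3,1),(4,1)}$ you cite are, so nothing in the argument is affected.
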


\begin{proof}
Consider $\omegati{0}$ and an orbit $\orbiti{0}$ such that $\xoo$ and $\theta^0_0$ are as described in the hypotheses above.  The fact that the first escape time $\tau_0$ is finite and less than the first return time $\upsilon_0$ of the orbit $\orbiti{0}$ follows immediately from Lemmas \ref{lem:orbitsInStump} and \ref{lem:orbitsInTop}.  

Suppose  that $\tan\theta^0_0 = 1/p$.  To see why $\xii{0}{\tau_0}$ is a distance of $\xoo - (\xoo)_1$ away from its relative origin $O_{\xii{0}{\tau_0}}$, we proceed as follows.   Either $\xoo < 1/2$ or $\xoo>1/2$.  

If $0<\xoo <1/2$, then
\begin{align}
\notag \xoo = \xoo - 0.0& = \xoo - (\xoo)_1.
\end{align}
\noindent When the billiard ball reaches the top edge of $\omegati{0}$, it will be located at a point $\xii{0}{\tau_0}$ that is a distance of $\xoo$ away from the relative origin $O_{\xii{0}{\tau_0}}$.  This follows from the fact that when one is examining the billiard orbit in $\omegati{0}$, one may do so as if it were the reflected-unfolded orbit of a square billiard table with side-length measuring $1/2$.  Hence, a billiard ball beginning a certain distance away from a corner of a square will intersect the top of the square at the same distance from some other top corner.  This corner would then be the relative origin of this basepoint in the smaller square billiard table. Since one is reflecting-unfolding an orbit into the approximation $\omegati{0}$, the billiard ball will intersect a segment to be removed in the construction of $\omegati{1}$ from $\omegati{0}$, but will be located a distance of $\xoo$ from its relative origin $O_{\xii{0}{\tau_0}}$.

On the other hand, if $1/2<\xoo<1$, then
\begin{align}
\notag\xoo - (\xoo)_1 &= \xoo - 0.1. 
\end{align}

\noindent When the billiard ball reaches the top edge of $\omegati{0}$, it will be located at a point $\xii{0}{\tau_0}$ that is a distance of $\xoo - 0.1$ away from the relative origin $O_{\xii{0}{\tau_0}}$.  The argument supporting this assertion is similar to the one given in the preceding paragraph.

When $\tan\theta^0_0 = -1/p$, an entirely similar argument can be used in order to show that $\xii{0}{\tau_0}$ is a distance of $(1 - \xoo)-(1-\xoo)_{1}$ away from its relative origin $O_{\xii{0}{\tau_0}}$. 

\end{proof}

\begin{proposition} 
\label{prop:FirstReturnTimeGreaterThanFirstEscapeTime}
Let $p\geq 1$ be an odd integer and $\xio{0} \neq m2^{-l}$, for any positive integers $l,m$, be on the base of $\omegati{0}$.  If $\seqiang{0}{\theta^0}$ is a sequence of compatible periodic orbits with $\tan{\theta^0} = \frac{1}{p}$ then, for every $n\geq 0$, there exists a finite first escape time $\tau_n< \upsilon_n$ and $\xii{n}{\tau_n}$ lying on a segment of $\omegati{n}$ to be removed in the construction of $\omegati{n+1}$ from $\omegati{n}$.

Moreover, still if $\tan\theta^0 = \frac{1}{p}$, then $\xii{n}{\tau_n}$ is a distance of $\xoo - (\xoo)_{n+1}$ away from its relative origin $O_{\xii{n}{\tau_n}}$.  If $\tan\theta^0 = -\frac{1}{p}$, then $\xii{n}{\tau_n}$ is a distance of $(1 - \xoo)-(1-\xoo)_{n+1}$ away from its relative origin $O_{\xii{n}{\tau_n}}$. 

\end{proposition}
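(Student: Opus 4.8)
The plan is to prove the statement by induction on $n$, with Lemma~\ref{lem:stumpAndTopCombined} serving as the base case $n=0$. It suffices to treat $\tan\theta^0 = \frac1p$ in detail: the case $\tan\theta^0 = -\frac1p$ follows by applying to every $\omegati{n}$ the left--right reflection $x\mapsto 1-x$, which sends $\xoo$ to $1-\xoo$, interchanges the two conventions for relative origins, and carries an orbit of slope $-\frac1p$ to one of slope $\frac1p$. So suppose $\tan\theta^0 = \frac1p$ and that the conclusion holds at level $n$: the orbit $\orbitiang{n}{\theta^0}$ has a finite escape time $\tau_n < \upsilon_n$; the point $\xii{n}{\tau_n}$ lies on a segment $\sigma'$ of $\omegati{n}$ removed in forming $\omegati{n+1}$, which is exactly the base of a copy $C$ of $\omegati{0}$ of scale $2^{-(n+1)}$ appended there; the direction of motion at $\xii{n}{\tau_n}$ is $\theta^0$ or $\pi-\theta^0$, so that the relative origin $O_{\xii{n}{\tau_n}}$ is a well-defined endpoint of $\sigma'$; and $\xii{n}{\tau_n}$ lies at distance $\delta_n := \xoo - (\xoo)_{n+1}$ from $O_{\xii{n}{\tau_n}}$. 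Writing $\xoo = 0.b_1b_2b_3\cdots$ in binary, we have $\delta_n = 0.\underbrace{0\cdots0}_{n+1}b_{n+2}b_{n+3}\cdots$, so $0 < \delta_n < 2^{-(n+1)}$, the latter being the length of $\sigma'$; here one uses that $\xoo$ is not dyadic.

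For the inductive step, I would first note that, since $\omegati{n}\subseteq\omegati{n+1}$ and the two boundaries differ only along the removed segments and along the boundaries of the newly appended copies (whose interiors are reachable only through those segments), the compatible orbits $\orbitiang{n}{\theta^0}$ and $\orbitiang{n+1}{\theta^0}$, viewed as paths, coincide until they reach $\xii{n}{\tau_n}$; in particular, by $\tau_n < \upsilon_n$, neither has returned to the base $[0,1]\times\{0\}$ of $\omegat$ by then. At $\xii{n}{\tau_n}$ the trajectory of $\orbitiang{n+1}{\theta^0}$ passes through $\sigma'$ and enters $C$ through the base of $C$, with direction $\theta^0$ or $\pi-\theta^0$. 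Since every $\phi_i$, and hence the similarity carrying $C$ onto $\omegati{0}$, is an orientation-preserving dilation-plus-translation, rescaling $C$ by $2^{n+1}$ turns the portion of $\orbitiang{n+1}{\theta^0}$ inside $C$ into an orbit $\mathscr{O}_0(x^{\ast},\phi)$ of $\omegati{0}$ that starts on the base, where $\phi\in\{\theta^0,\pi-\theta^0\}$ and $x^{\ast}$ is the rescaled image of $\xii{n}{\tau_n}$. Put $w := 2^{n+1}\delta_n = 0.b_{n+2}b_{n+3}\cdots$, which is again not of the form $m2^{-l}$ because $\xoo$ is not dyadic. The key bookkeeping observation is that the distance of the starting point from its relative origin rescales precisely to the quantity in which Lemma~\ref{lem:stumpAndTopCombined} is phrased: if $\phi = \theta^0$ then $x^{\ast} = w$, while if $\phi = \pi-\theta^0$ then $x^{\ast} = 1-w$, so that $1 - x^{\ast} = w$ in that case. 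Hence Lemma~\ref{lem:stumpAndTopCombined} applies to $\mathscr{O}_0(x^{\ast},\phi)$ and shows that it escapes $\omegati{0}$ (reaches a segment removed in forming $\omegati{1}$) in finite time, \emph{before} returning to the base, at a point lying at distance $w - (w)_1 = 0.0\,b_{n+3}b_{n+4}\cdots$ from \emph{its} relative origin.

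It then remains to transfer these statements back down. Because $\mathscr{O}_0(x^{\ast},\phi)$ reaches a removed segment of $\omegati{0}$ before returning to the base of $\omegati{0}$, the trajectory of $\orbitiang{n+1}{\theta^0}$ reaches a segment of $\omegati{n+1}$ removed in forming $\omegati{n+2}$ --- the image in $C$ of such a segment of $\omegati{0}$ --- while still inside $C$, hence before it could recross the base of $C$ and leave $C$. Since $C$ is disjoint from the base of $\omegat$ and the orbit had not met that base by time $\tau_n$, this yields $\tau_{n+1} < \infty$, $\tau_{n+1} < \upsilon_{n+1}$, and $\xii{n+1}{\tau_{n+1}}$ on a segment removed in forming $\omegati{n+2}$. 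Finally, scaling the distance $0.0\,b_{n+3}b_{n+4}\cdots$ back down by $2^{-(n+1)}$ gives $0.\underbrace{0\cdots0}_{n+2}b_{n+3}b_{n+4}\cdots = \xoo - (\xoo)_{n+2}$ for the distance of $\xii{n+1}{\tau_{n+1}}$ from its relative origin, while Lemmas~\ref{lem:orbitsInStump} and \ref{lem:orbitsInTop} (applied inside the rescaled $C$) ensure that the direction of motion at $\xii{n+1}{\tau_{n+1}}$ is again $\theta^0$ or $\pi-\theta^0$. This is the conclusion at level $n+1$, completing the induction.

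I expect the real difficulty to lie in the second paragraph's claim: verifying that the distance from the relative origin is the invariant that propagates cleanly, so that the two dual formulas of Lemma~\ref{lem:stumpAndTopCombined} --- the one for slope $\frac1p$ expressed through $\xoo$ and the one for slope $-\frac1p$ expressed through $1-\xoo$ --- dovetail in such a way that, whether $\orbitiang{n+1}{\theta^0}$ enters $C$ in direction $\theta^0$ or in direction $\pi-\theta^0$, the \emph{same} binary expansion of $\xoo$ reappears, shortened by one digit, rather than being accidentally complemented. Getting the endpoints of the shrinking removed segments, the $(0,0)$-versus-$(1,0)$ convention for relative origins, and the one-digit shift to line up simultaneously --- in particular through the $\pi-\theta^0$ branch --- is where the bookkeeping must be done with care; once it is in place in the base case, which is the content of Lemma~\ref{lem:stumpAndTopCombined}, the inductive step is essentially a matter of rescaling.
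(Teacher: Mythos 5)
Your proposal is correct and follows essentially the same route as the paper: induction on $n$ with Lemma \ref{lem:stumpAndTopCombined} as the base case, and an inductive step that reduces the orbit inside the newly appended scaled copy of $\omegati{0}$ back to the base case, tracking the distance to the relative origin as the binary expansion of $\xoo$ shifted by one digit. The paper phrases the digit-shift via the identity $\xoo - (\xoo)_{n+2} = |\xii{n}{\tau_n}-O_{\xii{n}{\tau_n}}| - (\xoo)_{n+1,n+2}$ and the observation that the midpoint of the removed segment separates $\xii{n}{\tau_n}$ from $O_{\xii{n}{\tau_n}}$ exactly when $(\xoo)_{n+1,n+2}=1$, which is the same bookkeeping you carry out with $w=2^{n+1}\delta_n$ and $w-(w)_1$; your treatment of the $\pi-\theta^0$ branch and of the $\tan\theta^0=-\frac{1}{p}$ case by reflection is merely more explicit than the paper's.
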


\begin{proof}
Let $\seqi{0}$ be a sequence of compatible periodic orbits and let $\xoo$ and $\theta^0$ be as described in the hypotheses above.  

 We proceed by induction on $n$.  The basic case when $n=0$ is stated and dealt with in Lemma \ref{lem:stumpAndTopCombined}. Let $N>0$.  For every $n\leq N$, we assume that $\xii{n}{\tau_n}$ does not have a finite binary expansion, determined relative to the segment on which $\xii{n}{\tau_n}$ lies, and that $|\xii{n}{\tau_n} - O_{\xii{n}{\tau_n}}| = \xoo - (\xoo)_{n+1}$.  Then, Lemma \ref{lem:stumpAndTopCombined} shows that the first escape time $\tau_{n+1}$ is finite and less than the first return time $\upsilon_{n+1}$.  

We then see that
\begin{align}
\notag \xoo - (\xoo)_{n+2} &= \xoo - ((\xoo)_{n+1}+(\xoo)_{n+1,n+2})\\
                                            &= \xoo - (\xoo)_{n+1}-(\xoo)_{n+1,n+2}\\
            \notag                         &= |\xii{n}{\tau_n}-O_{\xii{n}{\tau_n}}| -  (\xoo)_{n+1,n+2}.                    
\end{align}

The midpoint of the segment on which $\xii{n}{\tau_n}$ lies is between $\xii{n}{\tau_{n}}$ and $O_{\xii{n}{\tau_n}}$ if and only if $(\xoo)_{n+1,n+2} = 1$.  Therefore,

\begin{align}
\notag      \xoo - (\xoo)_{n+2} &= |\xii{n+1}{\tau_{n+1}}-O_{\xii{n+1}{\tau_{n+1}}}|.    
\end{align}

\noindent When $\tan\theta^0 = -1/p$, an entirely similar argument can be used in order to show that $\xii{n}{\tau_n}$ is a distance of $(1 - \xoo)-(1-\xoo)_{n+1}$ away from its relative origin $O_{\xii{n}{\tau_n}}$. 

This completes the proof of the proposition.  
\end{proof}

\begin{corollary}
\label{cor:twoNontrivialPaths}
Let $p\geq 1$ be an odd integer and $\xio{0} \neq m2^{-l}$, for any positive integers $l,m$, be on the base of $\omegati{0}$.  Suppose $\seqiang{0}{\theta^0}$ is a sequence of compatible periodic orbits with $\tan{\theta^0} = \frac{1}{p}$.  Then, in the sense of Definition \ref{def:ANontrivialPath}, both  $\seqiang{0}{\theta^0}$ and  $\seqiang{0}{\pi -\theta^0}$ yield nontrivial paths $\ntpang{\theta^0}$ and $\ntpang{\pi-\theta^0}$, respectively, where $x^0 := \xoo$. \emph{(}We remind the reader that $\xio{n} = \xoo$ for all $n\geq 0$.\emph{)}
\end{corollary}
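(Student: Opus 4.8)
The corollary is essentially a repackaging of Proposition \ref{prop:FirstReturnTimeGreaterThanFirstEscapeTime}, so the plan is to verify the hypotheses of Definition \ref{def:ANontrivialPath} for each of the two sequences $\seqiang{0}{\theta^0}$ and $\seqiang{0}{\pi-\theta^0}$. Recall that, by Definition \ref{def:ANontrivialPath}, the only thing one must check in order for $\overline{\bigcup_{n=0}^\infty \ntpi{n}}$ to be a nontrivial path of $\omegat$ is that the sequence at hand is a sequence of compatible orbits with a finite first escape time $\tau_n$ for every $n\geq 0$; and since $\xio{n}=\xoo$ for all $n$ — the point $\xoo$, being non-dyadic, never lies on a segment removed in any stage of the construction of $\omegat$ — the basepoint of the resulting nontrivial path is unambiguously $x^0:=\xoo$.

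For the direction $\theta^0$, with $\tan\theta^0=1/p$, the hypotheses of Proposition \ref{prop:FirstReturnTimeGreaterThanFirstEscapeTime} hold verbatim, so for every $n\geq 0$ the first escape time $\tau_n$ is finite (in fact $\tau_n<\upsilon_n$) and $\xii{n}{\tau_n}$ lies on a segment of $\omegati{n}$ removed in the passage to $\omegati{n+1}$. Setting $\ntpi{n}:=\orbitiang{n}{\theta^0}_{\tau_n}$, Definition \ref{def:ANontrivialPath} then yields the nontrivial path $\ntpang{\theta^0}=\overline{\bigcup_{n=0}^\infty \ntpi{n}}$, with $x^0:=\xoo$. (To justify the name, one may note that each $\ntpi{n}$ reaches the level-$(n+1)$ removed segment and so is not contained in $\omegati{n}$, whence the closure of the union lies in no single approximation $\omegati{N}$.) For the direction $\pi-\theta^0$, note that $\tan(\pi-\theta^0)=-1/p$. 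First, $\seqiang{0}{\pi-\theta^0}$ is again a sequence of compatible periodic orbits: each of its orbits is the time-reversal of the corresponding periodic orbit of $\seqiang{0}{\theta^0}$, hence periodic in $\omegati{n}$, and compatibility is unaffected since the common basepoint $\xoo$ is not a corner of any $\omegati{n}$. Second, the finiteness of $\tau_n$ for this direction follows by the very same induction as in the proof of Proposition \ref{prop:FirstReturnTimeGreaterThanFirstEscapeTime}, since its base case, Lemma \ref{lem:stumpAndTopCombined}, treats the slopes $\pm 1/p$ alike; equivalently, it follows from the $\theta^0$-case via the reflection symmetry of $\omegat$ about the line $x=1/2$, under which $(\xoo,\theta^0)\mapsto(1-\xoo,\pi-\theta^0)$ and $1-\xoo$ is again non-dyadic. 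Definition \ref{def:ANontrivialPath} then produces the nontrivial path $\ntpang{\pi-\theta^0}$, with $x^0:=\xoo$.

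The only real obstacle is the bookkeeping around the reversed direction: checking that $\seqiang{0}{\pi-\theta^0}$ is a genuine sequence of compatible orbits eligible for Definition \ref{def:ANontrivialPath}, and that the conclusion $\tau_n<\infty$ of Proposition \ref{prop:FirstReturnTimeGreaterThanFirstEscapeTime} — whose main clause is phrased for slope $+1/p$ — does extend to slope $-1/p$. I would dispatch both at once by appealing to the $x\mapsto 1-x$ symmetry of $\omegat$, which exhibits the $\pi-\theta^0$ case as a mirror image of the $\theta^0$ case already handled.
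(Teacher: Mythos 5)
Your proof is correct and follows essentially the same route as the paper's: both deduce the finiteness of every first escape time $\tau_n$ from Proposition \ref{prop:FirstReturnTimeGreaterThanFirstEscapeTime} (whose statement already covers both slopes $\pm\frac{1}{p}$) and then invoke Definition \ref{def:ANontrivialPath} to form $\overline{\bigcup_{n=0}^\infty\ntpi{n}}$ for each of the two directions; the paper simply handles the $\pi-\theta^0$ case with the word ``similarly,'' whereas you spell out the symmetry argument. (Only a cosmetic quibble: $\ntpi{n}$ \emph{is} contained in $\omegati{n}$, since the removed segment still belongs to $\tfraci{n}$; nontriviality comes from $\ntpi{n+1}$ extending past that segment, not from $\ntpi{n}$ leaving $\omegati{n}$.)
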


\begin{proof}
Let $\seqi{0}$ be a sequence of compatible periodic orbits, with $\xoo$ and $\theta^0$ as described in the hypotheses above.  From the sequence of basepoints $\{\xii{n}{\tau_n}\}_{n=0}^\infty$, we can construct a sequence of paths $\{\ntpi{n}\}_{n=0}^\infty$ converging to a nontrivial path $\ntp$ that reaches an elusive point of $\omegat$.  Specifically, $\ntp = \overline{\bigcup_{n=0}^\infty\ntpi{n}}$, where the closure is with respect to the Hausdorff metric (in the plane, $\mathbb{R}^2$).\footnote{Recall that the Hausdorff metric is a metric on the space of nonempty compact subsets of a metric space $X$.  If $A$ and $B$ are two nonempty compact subsets of $X$, then $d_H(A,B) := \max\{\sup_{a\in A}\inf_{b\in B} d_X(a,b),\sup_{b\in B}\inf_{a\in A} d_X(a,b)\}$.}\footnote{Really, we should work with the Hausdorff--Gromov limit here; see footnote \ref{ftnote:HausdorffGromovLimit}.} Similarly, $\{\ntpixang{n}{\xio{n}}{\pi - \theta^0}\}_{n=0}^\infty$ reaches an elusive point.
\end{proof}

In Figure \ref{fig:T-fractalNontrivialPaths}, we see an example of two nontrivial paths described in Corollary \ref{cor:twoNontrivialPaths}.  Beginning from $x^0 = \frac{1}{3}$ in the direction $\theta^0$ such that $\tan\theta^0 = \frac{1}{3}$, $\ntp$ and $\ntpang{\pi - \theta^0}$ are two nontrivial paths each reaching distinct elusive points.  As we will now see, such elusive points are rational points since $x^0$ is a rational value.


\begin{theorem}
\label{thm:nontrivialPathsConvergeToRationalElusivePointsIffXooIsRational}
Let $p\geq 1$ be an odd integer and $\xio{0} \neq m2^{-l}$, for any two positive integers $l,m$.  Suppose $\seqiang{0}{\theta^0}$ is a sequence of compatible periodic orbits with $\tan{\theta^0} = \frac{1}{p}$. Then, $\ntpang{\theta^0}$ and $\ntpang{\pi-\theta^0}$ converge to two distinct elusive points, and these two elusive points are rational elusive points if and only if $\xoo$ is a rational value. 
\end{theorem}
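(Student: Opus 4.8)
The plan is to read off the address — an infinite word in the alphabet $\{L,R\}$ — of each of the two limiting elusive points from the binary expansion of $\xoo$, and then to invoke the elementary fact that a number in $(0,1)$ is rational precisely when its binary expansion is eventually periodic, which (by the definition of a rational elusive point recalled in \S\ref{sec:background}) is exactly the condition that its address be preperiodic. By Corollary \ref{cor:twoNontrivialPaths}, $\ntpang{\theta^0}$ and $\ntpang{\pi-\theta^0}$ are nontrivial paths reaching elusive points; call these points $E^+$ and $E^-$. It then remains to (i) express the addresses of $E^+$ and $E^-$ in terms of the binary expansions of $\xoo$ and of $1-\xoo$, (ii) deduce the rationality equivalence, and (iii) show $E^+\neq E^-$.

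For (i), write $\xoo=0.d_1 d_2 d_3\cdots$ in binary — an infinite, non-terminating expansion, since $\xoo$ is not dyadic. The path $\ntpang{\theta^0}$ is generated level by level: for each $n\geq 0$ the pointmass leaves $\omegati{n}$ through a removed segment which is the base of a scale-$2^{-(n+1)}$ copy $C_{n+1}$ of $\tfraci{0}$, where $C_{n+1}$ is one of the two sub-copies that $C_n$ acquires in passing to $\omegati{n+1}$, and the sequence recording at each level whether this sub-copy is the left one ($L$) or the right one ($R$) is the address of $E^+$. By Proposition \ref{prop:FirstReturnTimeGreaterThanFirstEscapeTime}, $\xii{n}{\tau_n}$ lies on the base of $C_{n+1}$ at distance $\xoo-(\xoo)_{n+1}=2^{-(n+1)}(0.d_{n+2}d_{n+3}\cdots)$ from its relative origin; hence, as in the proof of that proposition, the midpoint of the base of $C_{n+1}$ separates $\xii{n}{\tau_n}$ from its relative origin if and only if $d_{n+2}=1$. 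Feeding this, together with the incoming direction of the flow, into Lemmas \ref{lem:orbitsInStump}, \ref{lem:orbitsInTop} and \ref{lem:stumpAndTopCombined} applied inside $C_{n+1}$ shows that the next sub-copy chosen — and the side and direction with which the pointmass then meets the base of $C_{n+2}$ — is determined by the single bit $d_{n+2}$ together with a state $\epsilon_n\in\{+,-\}$, namely the sign of the horizontal component of the velocity at $\xii{n}{\tau_n}$; moreover, since the passage of the pointmass through each stump-and-top region is, up to scaling, the same configuration at every level (it depends only on $p$), the states obey a fixed transition rule, so $(\epsilon_n)_{n\geq 0}$ is eventually periodic, while for each fixed state the assignment from the bit to the chosen side is a bijection. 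Consequently the address of $E^+$ is, after its first letter, a finite-state recoding of the tail $d_2 d_3 d_4\cdots$ of the binary expansion of $\xoo$ which is invertible once the (eventually periodic) state is known; in particular it is preperiodic if and only if the binary expansion of $\xoo$ is eventually periodic. Running the identical analysis with $\theta^0$ replaced by $\pi-\theta^0$ — for which Proposition \ref{prop:FirstReturnTimeGreaterThanFirstEscapeTime} gives the distance $(1-\xoo)-(1-\xoo)_{n+1}$, and for which $1-\xoo=\sum_{i\geq 1}(1-d_i)2^{-i}$ has binary expansion the bitwise complement of that of $\xoo$ (again because $\xoo$ is not dyadic) — shows that the address of $E^-$ is preperiodic if and only if the binary expansion of $1-\xoo$ is eventually periodic.

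Claim (ii) is then immediate: $E^+$ is a rational elusive point $\iff$ its address is preperiodic $\iff$ the binary expansion of $\xoo$ is eventually periodic $\iff\xoo\in\Q$, and likewise $E^-$ is rational $\iff 1-\xoo\in\Q\iff\xoo\in\Q$; so the two elusive points are rational when $\xoo$ is rational and both irrational otherwise. For (iii), by Lemmas \ref{lem:orbitsInStump} and \ref{lem:orbitsInTop}, reversing the sense of the horizontal component of the initial velocity interchanges the two removed segments of $\omegati{0}$ through which a first escape may take place; hence $\ntpang{\theta^0}$ and $\ntpang{\pi-\theta^0}$ leave $\omegati{0}$ through different copies of $\tfraci{0}$ at their respective first escapes, so the addresses of $E^+$ and $E^-$ already disagree in their first letter. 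Since $\xoo$ is not dyadic, neither of these addresses is eventually constant, so each is the address of a \emph{unique} point of $\mathscr{E}$; therefore $E^+\neq E^-$, which completes the proof.

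The step likely to require the most care is (i): making the claim "the address of the limiting elusive point is a finite-state recoding of the binary expansion of $\xoo$" precise — that is, propagating the relative origin and the velocity-sign state correctly from one level to the next so that the recoding automaton is genuinely finite with an eventually periodic state sequence, and checking that, for each state, the recoding is a letter-to-letter bijection (this last point being what yields the non-obvious "only if" implication). Proposition \ref{prop:FirstReturnTimeGreaterThanFirstEscapeTime} already performs most of this bookkeeping; the remaining work is to extract from Lemmas \ref{lem:orbitsInStump}--\ref{lem:orbitsInTop} that, given the incoming velocity-sign state and the single bit $d_{n+2}$ (equivalently, which half of the base of $C_{n+1}$ the pointmass occupies), both the next sub-copy and the next state are determined. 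Everything else reduces to standard facts about eventual periodicity of binary expansions.
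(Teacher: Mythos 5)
Your route is genuinely different from the paper's. The paper splits the two implications: for rational $\xoo$ it locates an $N$ with $2^N\left|\xii{N}{\tau_N}-O_{\xii{N}{\tau_N}}\right| = \left|\xoo - O_{\xoo}\right|$ and builds the limiting path by scaling, possibly reflecting, and appending the level-$N$ escape path ad infinitum, so that the $L$/$R$ address is preperiodic by construction; for irrational $\xoo$ it argues by contradiction that no exact self-similar repetition is possible, since an unfolded square orbit would otherwise meet a horizontal line at a rational abscissa. You instead package the level-by-level bookkeeping of Proposition \ref{prop:FirstReturnTimeGreaterThanFirstEscapeTime} as an invertible letter-to-letter recoding between the binary expansion of $\xoo$ (resp.\ $1-\xoo$) and the address of the limiting elusive point, and transfer eventual periodicity across it. This treats both implications uniformly, and it also supplies a proof that the two elusive points are distinct, a claim of the theorem on which the paper's own proof is silent.

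There is, however, one step that fails as written: ``the states obey a fixed transition rule, so $(\epsilon_n)_{n\geq 0}$ is eventually periodic.'' A fixed transition rule on a finite state set driven by the input bits does not yield an eventually periodic state sequence; by Lemmas \ref{lem:orbitsInStump} and \ref{lem:orbitsInTop} the outgoing direction at each level (your state) is switched or not according to which half of the relevant base the point occupies, i.e.\ according to the bit $d_{n+1}$, so for irrational $\xoo$ the state sequence is in general \emph{not} eventually periodic. Since your invertibility claim is phrased as ``invertible once the (eventually periodic) state is known,'' the justification evaporates precisely where the ``only if'' implication needs it. The repair is already contained in Lemma \ref{lem:orbitsInTop}: for fixed $p \bmod 8$, which of the two removed segments the orbit escapes through is determined by the sign of the slope alone (both halves of the entry segment lead to the \emph{same} removed segment, only the exit angle differs), so the output letter determines the state. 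Hence a preperiodic address forces an eventually periodic state sequence, and bijectivity of the map from the bit to the next state then recovers eventual periodicity of the bits; conversely, eventually periodic bits make the pair sequence (state, bit) eventually periodic and hence the address preperiodic. The same observation is needed to substantiate your claim that neither address is eventually constant (an eventually constant address forces an eventually constant state, hence eventually constant bits, hence $\xoo$ or $1-\xoo$ dyadic), on which your distinctness argument leans. With these substitutions the argument closes.
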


\begin{proof}
Corollary \ref{cor:twoNontrivialPaths} states that each nontrivial path will converge to some elusive point. Let $\seqi{0}$ be a sequence of compatible periodic orbits with $\xoo$ and $\theta^0$ as described in the hypotheses above.  Suppose $\xoo$ is a rational value.  By Proposition \ref{prop:FirstReturnTimeGreaterThanFirstEscapeTime}, there exists $N$ such that 
\begin{align}
\notag 2^N|\xii{N}{\tau_N} - O_{\xii{N}{\tau_N}}| & = |\xoo - O_{\xoo}|.
\end{align}
\noindent If $\theta^{\tau_N - 1} = \theta^0$ (the reflected angle after colliding with the boundary at the point $\xii{N}{\tau_N - 1}$), then one can copy, scale and append $\orbitiang{N}{\theta^0}_{\tau_N}$ to $\xii{N}{\tau_N}$ in order to determine the path $\orbitiang{2N}{\theta^0}_{\tau_{2N}}$ of $\omegati{2N}$.  Continuing in this fashion ad infinitum, one produces a nontrivial path reaching a rational elusive point.  The portion of the orbit given by $\orbitiang{N}{\theta^0}_{\tau_N}$ establishes the pattern of $L$'s and $R$'s one can use to address the rational elusive point of $\omegat$.  

If $\theta^{\tau_N - 1} = \pi - \theta^0$, then $\xii{N}{\tau_N}$ must still be the same distance away from its relative origin.  This means that one can simply reflect $\orbitiang{n}{\theta^0}_{\tau_N}$ through the vertical line of symmetry for $\omegat$, then scale, copy and append the path $\orbitiang{N}{\tau_N}$ to the point $\xii{N}{\tau_N}$  in order to produce the path $\orbitiang{2N}{\theta^0}_{\tau_{2N}}$.  One then continues to append scaled copies of $\orbitiang{2N}{\theta^0}_{\tau_{2N}}$, ad infinitum.  The eventual result is a nontrivial path reaching a rational elusive point of $\omegat$.  

If $\xoo$ is irrational, then $|\xoo - (\xoo)_{N}|$ is never rational.  Hence, for any $n\geq 0$, there is no way to scale $\orbitiang{n}{\theta^0}_{\tau_n}$ so as to append to $\xii{n}{\tau_n}$ and produce the path $\orbitiang{2n}{\theta^0}_{\tau_{2n}}$.  If there were such a way, this would imply that the corresponding orbit of a square of scale $2^{-n-1}$ would unfold as a straight-line into a tiling of the plane by squares with side-length $2^{-n-1}$ and intersect a horizontal segment at a point with an $x$-coordinate that would be rational, which is impossible. Hence, the corresponding nontrivial path converges to an irrational elusive point of $\omegat$.

\end{proof}


\begin{remark}
We note that in the proof above, when the direction of motion at $\xii{N}{\tau_N -1}$ is not $\theta^0$ but is instead $\pi - \theta^0$, the point of first escape $\xii{N}{\tau_N}$ is no longer the same point as when the direction of travel was assumed to be $\theta^0$.  This follows from the fact that we are measuring how far $\xii{N}{\tau_N}$ is from its relative origin, and that this distance must remain the same.  The notation remains the same, while the location of the first escape point must be different in order to maintain the same distance from the appropriate relative origin (which depends solely on the direction of travel). 
\end{remark}
\begin{figure}
	\begin{center}\includegraphics[scale=.7]{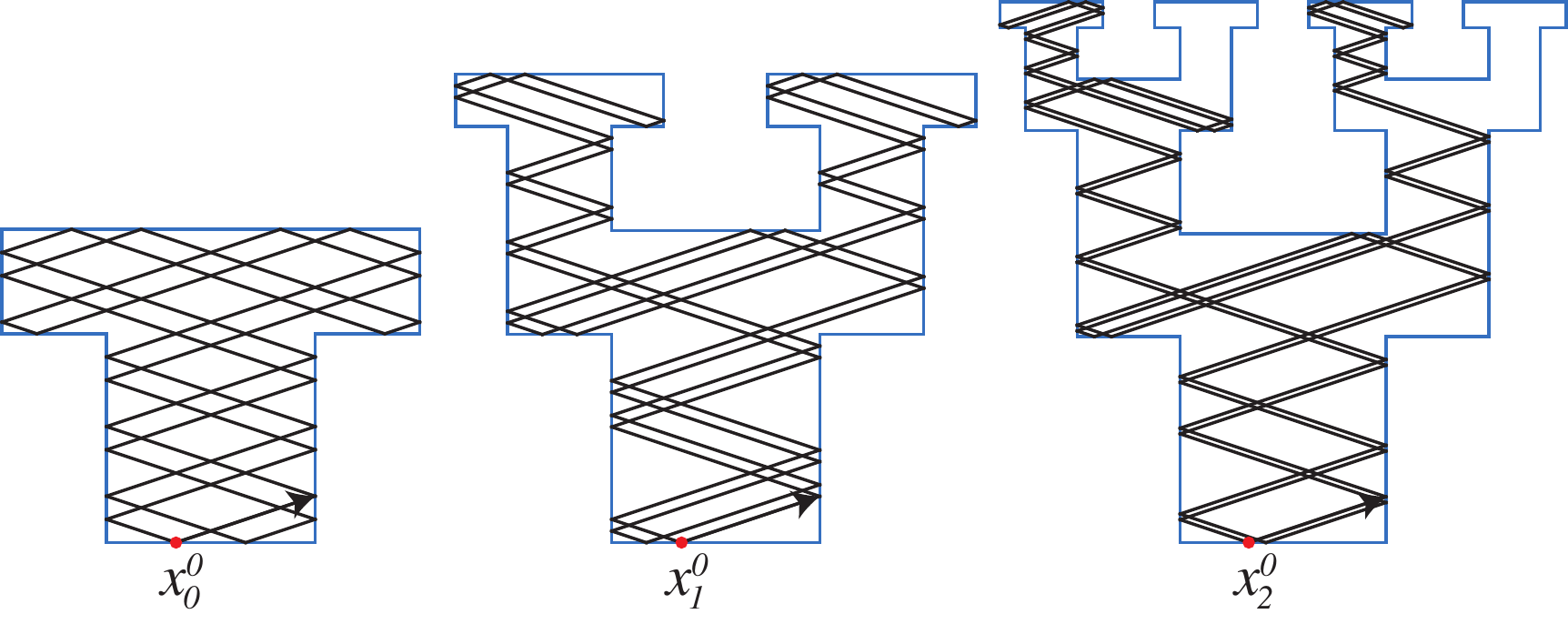}
	\end{center}
	\caption{Shown here is a sequence of compatible orbits satisfying Definitions \ref{def:recurrentOrbit} and \ref{def:periodicOrbit}; the initial condition of each orbit is $(\frac{1}{3},\arctan \frac{1}{3})$.  We can see how for all sufficiently large $n$, $\ntpixang{n}{\frac{1}{3}}{\arctan \frac{1}{3}}\cup\ntpixang{n}{\frac{1}{3}}{\pi - \arctan \frac{1}{3}} $ is virtually indistinguishable from $\orbitixang{n}{\frac{1}{3}}{\arctan\frac{1}{3}}$.  Hence, the orbit $\billiardOrbit$ is degenerate.}
	\label{fig:sequenceOfCompatibleOrbits1-3-1-3}
\end{figure}
In Theorem \ref{thm:nontrivialPathsConvergeToRationalElusivePointsIffXooIsRational}, we determined how a particular nontrivial path would converge to an elusive point.  An example of such a nontrivial path was given in Example \ref{exa:nontrivialPaths1-3} and illustrated in Figure \ref{fig:T-fractalNontrivialPaths}.  We now go one step further to show that the two nontrivial paths given in Theorem \ref{thm:nontrivialPathsConvergeToRationalElusivePointsIffXooIsRational} describe the path which the orbit $\billiardOrbit$ would traverse.  For the initial condition given in Theorem \ref{thm:PeriodicOrbitsFromNontrivialPaths} below, the orbit would be degenerate, meaning that the path traversed after reaching the elusive point is exactly the path traversed by the nontrivial path, but in the opposite direction.  This is illustrated in Figure \ref{fig:sequenceOfCompatibleOrbits1-3-1-3}.
\begin{theorem}[Periodic orbits from nontrivial paths]
\label{thm:PeriodicOrbitsFromNontrivialPaths}
Let $p\geq 1$ be an odd integer and $\xio{0} \neq m2^{-l}$, with $l,m$ being arbitrary positive integers.  Suppose $\seqiang{0}{\theta^0}$ is a sequence of compatible periodic orbits such that $\tan{\theta^0} = \frac{1}{p}$.  If $x^0 = \xoo$ \emph{(}where we recall that $\xio{n} = \xoo$ for all $n\geq 0$\emph{)}, then $\billiardOrbit$ is a periodic orbit of $\omegat$, in the sense of Definition \ref{def:periodicOrbit}.
\end{theorem}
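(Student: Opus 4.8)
The plan is to verify directly the two requirements of Definition~\ref{def:periodicOrbit}: that $\billiardOrbit$ is a recurrent orbit of $\omegat$ in the sense of Definition~\ref{def:recurrentOrbit}, and that the associated sequence of first-return points $\{x^{\upsilon^j}\}_{j=1}^\infty$ is periodic with finite period. The geometric picture driving the argument---visible in Figures~\ref{fig:T-fractalNontrivialPaths} and~\ref{fig:sequenceOfCompatibleOrbits1-3-1-3}---is that $\billiardOrbit$ is \emph{degenerate}: it runs out along $\ntpang{\theta^0}$ to its elusive point, retraces that path back to $\xoo$, then runs out along $\ntpang{\pi-\theta^0}$ to its (distinct) elusive point, retraces back to $\xoo$, and repeats. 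If so, then the orbit meets the base of $\omegat$ only at $\xoo$, the sequence $\{x^{\upsilon^j}\}_j$ is the constant sequence $\xoo$, and periodicity of period one follows.

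First I would collect the structural inputs. By hypothesis $\seqiang{0}{\theta^0}$ is a sequence of compatible periodic orbits, and by Corollary~\ref{cor:twoNontrivialPaths} so is $\seqiang{0}{\pi-\theta^0}$; both yield nontrivial paths, $\ntpang{\theta^0}$ and $\ntpang{\pi-\theta^0}$, and by Theorem~\ref{thm:nontrivialPathsConvergeToRationalElusivePointsIffXooIsRational} these reach two \emph{distinct} elusive points $e^{+}$ and $e^{-}$. From Proposition~\ref{prop:FirstReturnTimeGreaterThanFirstEscapeTime}, $\tau_n<\upsilon_n<\infty$ for every $n\geq 0$, so along each orbit $\orbitiang{n}{\theta^0}$ the escape point $\xii{n}{\tau_n}$ is reached strictly before the orbit first returns to the base side of $\tfraci{n}$; hence each $\ntpi{n}$---and therefore each of the limit paths $\ntpang{\theta^0},\ntpang{\pi-\theta^0}$---meets the base of $\omegat$ only at $\xoo$. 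Moreover $\xio{n}=\xoo$ for all $n$, so the hypothesis ``$\xio{m}=\xio{i}$ for all $m\geq i$'' of Definition~\ref{def:recurrentOrbit} is in force.

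The main step is to control one excursion of $\orbitiang{n}{\theta^0}$ between two consecutive visits to the base. Starting at $\xoo$ in direction $\theta^0$, the orbit follows $\ntpi{n}$ exactly out to $\xii{n}{\tau_n}$, passing through a finite nested chain of copies of $\mathscr{T}_0$ of scales $2^{-1},2^{-2},\dots,2^{-d(n)}$, where $d(n)$ is the depth at which this path terminates inside $\omegati{n}$ (produced by the copy-scale-append mechanism used in Example~\ref{exa:nontrivialPaths1-3} and in the proof of Theorem~\ref{thm:nontrivialPathsConvergeToRationalElusivePointsIffXooIsRational}); one has $d(n)\to\infty$ as $n\to\infty$. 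Inside the deepest, unrefined copy the orbit is a reflected-unfolded square-billiard orbit, and Lemma~\ref{lem:orbitsInStump} shows that it leaves that copy as the mirror image---through the vertical axis of the copy, with reversed horizontal direction---of the way it entered. Combining this with reversibility of the billiard flow and unwinding back up the nested chain, the orbit retraces $\ntpi{n}$ and returns to the base at a point $\xii{n}{\upsilon_n}$ that differs from $\xoo$ only by a modification localized inside a copy of scale $2^{-d(n)}$; thus $|\xii{n}{\upsilon_n}-\xoo|=O(2^{-d(n)})$. Applying the same analysis from the first return onward---where the incoming direction has become $\pi-\theta^0$, so the next excursion heads toward $e^{-}$---controls $\xii{n}{\upsilon_n^2}$, and inductively every $\xii{n}{\upsilon_n^j}$. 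Hence, for each fixed $j\geq 1$, the sequence $\{\xii{n}{\upsilon_n^j}\}_{n}$ is Cauchy and converges to $\xoo$, so $\billiardOrbit$ is a recurrent orbit with $x^{\upsilon^j}=\xoo$ for every $j$.

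Finally, $\{x^{\upsilon^j}\}_{j=1}^\infty$ is the constant sequence equal to $\xoo$, which is periodic of period one; by Definition~\ref{def:periodicOrbit}, $\billiardOrbit$ is therefore a periodic orbit of $\omegat$. (No rationality of $\xoo$ is used: the return points converge to $\xoo$ regardless, even though the elusive points $e^{\pm}$ are irrational when $\xoo$ is.) The principal obstacle is the middle step---making rigorous the self-similarity-and-symmetry bookkeeping showing that the portion of $\orbitiang{n}{\theta^0}$ beyond $\xii{n}{\tau_n}$ retraces $\ntpi{n}$ up to a perturbation confined to a copy of scale $2^{-d(n)}$, so that the only net effect on the base return point is of size $O(2^{-d(n)})\to 0$; the remaining assertions are immediate once the quoted results are in hand.
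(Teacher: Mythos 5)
Your proposal is correct and follows essentially the same route as the paper's proof: both rest on the antiparallel-return symmetry coming from Lemmas \ref{lem:orbitsInStump} and \ref{lem:orbitsInTop} (an orbit crossing a deleted segment returns through it in the reversed direction at a distance controlled by the scale of the deepest copy of $\mathscr{T}_0$ it visits), yielding $|\xii{n}{\upsilon_n}-\xoo|\to 0$ and hence the constant return sequence $x^{\upsilon^j}=\xoo$, so that Definition \ref{def:periodicOrbit} is satisfied with the limiting path being the union of the two nontrivial paths of Corollary \ref{cor:twoNontrivialPaths}. Your treatment of the higher return times $\upsilon_n^j$ for all $j\geq 1$ is a slightly more explicit bookkeeping of a point the paper leaves implicit, but it is the same argument.
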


\begin{proof}
Let $\seqi{0}$ be a sequence of compatible periodic orbits, with $\xoo$ and $\theta^0$ as described in the hypotheses above.  We claim that for each $n>0$, $\orbitiang{n}{\theta^0}$ is an orbit with a first escape time $\tau_n<\infty$ and first return time $\upsilon_n$ such that $\tau_n<\upsilon_n$ and $|\xii{n}{\upsilon_n}-\xoo| < 2^{-n+1}$.  The fact that $\tau_n$ is finite and $\tau_n<\upsilon_n$ follows from Corollary \ref{cor:twoNontrivialPaths}.  Using the symmetry of $\omegati{0}$ and Lemmas \ref{lem:orbitsInStump} and \ref{lem:orbitsInTop}, one can show that an orbit beginning from the base of $\omegati{0}$ at an angle of $\theta^0$ will return in an antiparallel direction $\theta^0+\pi$.  Consequently, in any scaled copy of $\omegati{0}$ in $\omegati{n}$, any orbit passing through a deleted segment at an angle $\theta^{k}$ will return and pass through the deleted segment at an angle of $\theta^k+\pi$ and less than a distance of $2^{-n+1}$ from the point at which the orbit initially crossed the deleted segment.  Hence, $\xii{n}{\upsilon_n}$ will be within a distance of $2^{-n+1}$ from $\xoo$. It follows that as $n$ increases, the distance between $\xii{n}{\upsilon_n}$ and $\xoo$ decreases to zero. Hence, $\{\xii{n}{\upsilon_n}\}_{n=0}^\infty$ will converge to some point $x$ in the unit-interval base of $\omegat$.  Specifically, $\xii{n}{\upsilon_n}$ converges to $\xoo:=x^0$ and the path that this orbit takes is given by the union of the two nontrivial paths shown to exists by Corollary \ref{cor:twoNontrivialPaths}.
\end{proof}

As was alluded to in \S\ref{sec:SequencesOfCompatiblePeriodicOrbits}, we now return to our discussion of eventually constant sequences of compatible periodic orbits.  Recall that the notion of an eventually constant sequence of compatible orbits was introduced in Definition \ref{def:EventuallyConstanceSequenceOfCompatibleOrbits} and that an eventually constant sequence of compatible periodic orbits is one for which every compatible orbit is periodic in its respective billiard table.  We now state the following result.

\begin{theorem}
\label{thm:eventuallyConstantSequenceIsPeriodicOrbit}
	The trivial Hausdorff limit of an eventually constant sequence of compatible periodic orbits is a periodic orbit of $\omegat$. 
\end{theorem}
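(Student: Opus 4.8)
The plan is to verify directly that the limiting object $\billiardOrbit$ meets the requirements of Definitions~\ref{def:recurrentOrbit} and~\ref{def:periodicOrbit}; eventual constancy will reduce almost everything to bookkeeping. Let $\seqi{i}$ be an eventually constant sequence of compatible periodic orbits and let $N$ be as in Definition~\ref{def:EventuallyConstanceSequenceOfCompatibleOrbits}, so that for every $n\geq N$ the set of points at which $\orbitiang{n}{\theta^0}$ meets $\tfraci{n}$ equals the set of points at which $\orbitiang{N}{\theta^0}$ meets $\tfraci{N}$; recall also that $\xio{n}=\xio{i}$ (as a point of the plane) for all $n\geq i$. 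Since the orbits $\orbitiang{n}{\theta^0}$, $n\geq N$, coincide as subsets of the plane, their successive intersections with the side $\sigma$ of $\tfraci{i}$ containing $\xio{i}$, together with the times at which those intersections occur, are identical for all $n\geq N$; in particular, for each $j\geq 1$ the $j$th return time satisfies $\upsilon_n^j=\upsilon_N^j$ and $\xii{n}{\upsilon_n^j}=\xii{N}{\upsilon_N^j}$ for every $n\geq N$. Because $\orbitiang{N}{\theta^0}$ is a periodic orbit of $\omegati{N}$, it meets $\sigma$ infinitely often, so $\upsilon_N^j$ is well defined for every $j\geq 1$; here one uses that, by our standing convention, $\xio{i}$ never lies on a segment removed at a later stage, so $\sigma$ persists in all $\tfraci{n}$.

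Next, recurrence will be immediate: for each fixed $j\geq 1$ the sequence $\{\xii{n}{\upsilon_n^j}\}_{n=i}^\infty$ is eventually equal to the constant $\xii{N}{\upsilon_N^j}$, hence converges to $x^{\upsilon^j}:=\xii{N}{\upsilon_N^j}$, which lies on the segment containing $\xio{i}$. Thus $\billiardOrbit$ is a recurrent orbit of $\omegat$ in the sense of Definition~\ref{def:recurrentOrbit}, with $j$th return time $\upsilon^j:=\upsilon_N^j$. Then I would show that $\{x^{\upsilon^j}\}_{j=1}^\infty$ is periodic of finite period: since $\orbitiang{N}{\theta^0}$ is periodic in the rational polygonal billiard $\omegati{N}$, there is a least positive integer $P$ with $f_N^{P}(\xio{N},\theta^0)=(\xio{N},\theta^0)$, so the orbit visits only finitely many point-angle pairs and its crossings of $\sigma$ recur cyclically, i.e.\ there is a positive integer $\ell$ with $\xii{N}{\upsilon_N^{j+\ell}}=\xii{N}{\upsilon_N^{j}}$ for all $j\geq 1$. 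Hence $\{x^{\upsilon^j}\}_{j=1}^\infty=\{\xii{N}{\upsilon_N^j}\}_{j=1}^\infty$ has finite period $\ell$, and Definition~\ref{def:periodicOrbit} applies: $\billiardOrbit$ is a periodic orbit of $\omegat$. Finally, the Hausdorff (equivalently, Hausdorff--Gromov) limit $\overline{\bigcup_{n=i}^\infty\orbitiang{n}{\theta^0}}$ exists and equals the common set $\orbitiang{N}{\theta^0}$, embedded in $\omegat$, precisely because the sequence stabilizes at level $N$ — this is the sense in which the limit is ``trivial'' — and the preceding paragraphs identify this set as a periodic orbit.

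The only step that is not pure bookkeeping is the passage, in the second paragraph, from ``$\orbitiang{N}{\theta^0}$ is a periodic billiard orbit of $\omegati{N}$'' to ``the induced sequence of first-return points to $\sigma$ is periodic of finite period''; this relies on the standard fact that a periodic orbit of a polygonal billiard is a closed path meeting the boundary in finitely many points, so that the returns to any fixed side cycle through a finite list of points with a constant gap in return times. I expect this to be the main (and essentially the only) obstacle; the remaining identifications — of return times, of return points, and of the trivial limit with $\orbitiang{N}{\theta^0}$ — follow at once from eventual constancy.
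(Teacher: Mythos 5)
Your proof is correct and takes essentially the same approach as the paper, whose entire argument is the one-line assertion that Definition \ref{def:periodicOrbit} is ``clearly satisfied'' in this case; you have simply supplied the bookkeeping (stabilization of the return times and return points at level $N$, and the cyclic recurrence of the crossings of $\sigma$ coming from periodicity in $\omegati{N}$) that the authors left implicit. Nothing further is needed.
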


\begin{proof}
In this case, Definition \ref{def:periodicOrbit} is clearly satisfied.	
\end{proof}

We can clearly see in Figure \ref{fig:ConstantOrbitInT1} why the Hausdorff limit of an eventually constant sequence of compatible periodic orbits $\seqi{i}$ is a periodic orbit of $\omegat$: there exists some $k\geq i$ such that $\orbitiang{n}{\theta^0} = \orbitiang{k}{\theta^0}$, for every $n\geq k$.

In \S\ref{sec:SequencesOfCompatiblePeriodicOrbits}, we showed that there exist sequences of compatible periodic orbits.  In some situations (Theorem \ref{thm:eventuallyConstantSequenceOfCompatOrbits}), such sequences were eventually constant.  In other situations (Proposition \ref{prop:FirstReturnTimeGreaterThanFirstEscapeTime}) such sequences were not eventually constant.  But, both Theorems \ref{thm:PeriodicOrbitsFromNontrivialPaths} and \ref{thm:eventuallyConstantSequenceIsPeriodicOrbit} showed that one could construct a periodic orbit from a sequence of compatible periodic orbits, though with both orbits behaving qualitatively different.

\section{A nontrivial path in an irrational direction}
\label{sec:aNontrivialPathInAnIrrationalDirection}

Finally, we provide an example of a sequence of compatible singular orbits yielding a nontrivial path that converges to a rational elusive point, yet has an initial direction which is irrational.   Hence, each orbit in the sequence of compatible orbits is only part of an orbit that is, in fact, singular in its respective approximation (each backwards orbit will be dense in its respective approximation).  Consequently, such a sequence of compatible orbits will yield a nontrivial path that constitutes a singular orbit of $\omegat$, per Definition \ref{def:ASingularOrbit}.

Consider the initial direction $\theta^0$ such that $\tan \theta^0 = \sqrt{2}/34$.   We claim that there exists $x^0$  in the base of $\omegat$ such that $\billiardOrbitang{\pi - \theta^0}$ is a singular orbit of $\omegat$ and the path given by the orbit is a nontrivial path,  $\ntpang{\pi - \theta^0}$, reaching a rational elusive point of $\omegat$. 

\begin{remark}
We note that $\theta^0 = \arctan \frac{\sqrt{2}}{34}$ is a specific direction.  The behavior we are about to describe was observed in our search for an orbit with an irrational initial direction ($\theta^0$ such that $\tan \theta^0$ is an irrational value) which was not dense in $\omegat$. Current work with C. Johnson is concerned with determining whether or not the corresponding backwards orbit is dense or singular, or neither.  We expect that the work of P. Hooper is relevant in this context and should allow us to conclude that there are irrational directions such that orbits in such directions are not uniformly distributed in $\omegat$; see \cite{JohNie2} and \cite{Hoo}.  The orbit $\billiardOrbit$, where $\theta^0 = \arctan \sqrt{2}/34$, is likely to be such an example, but some work remains to be done in order to rigorously establish this.
\end{remark}

In this particular setting, we denote by $\zeta_n$ the number of iterations of the billiard map required for the orbit to reach the bottom of $\omegati{n}$.  This notation is particularly advantageous  when the billiard ball is beginning from the top of $\omegati{n}$ (or any other segment of $\omegati{n}$ that is not the base of $\omegati{n}$).  Additionally, we denote by $\sigma_n$ the value $\sum_{i=0}^n 3\cdot 2^{-i-1}$, this being the height of $\omegati{n}$.

An orbit beginning at the point on the boundary of $\omegati{0}$ given by $(0,\sigma_0)$ at an angle of $2\pi - \theta^0$ must return to the base at the point with $x$-coordinate 
\begin{align}
\xii{0}{\zeta_0} :=& 51\sqrt{2}/2 - 36;
\end{align}
\noindent   see Figure \ref{fig:firstSingularOrbit}.  Suppose an orbit begins at $(0,\sigma_1)$ at an angle of $\pi + \theta^0$.  Then, as we have observed in our exact arithmetic simulations\footnote{\label{ftnote:CASComment} We have performed exact arithmetic computations in Matlab using the built-in computer algebra system MuPad.  This means that the computer does not perform any approximations of irrational values.  Rather, for example, $\sqrt{2}$ is exactly that and not represented by some rational approximation.}, the orbit passes through the point $(-\xii{0}{\zeta_0}/2, \sigma_0)$ and eventually, after finitely many more collisions, and before escaping to $\omegati{2}$, the orbit intersects the base at the point $(1649\sqrt{2})/4 - 583+\xii{0}{\zeta_0}$.  We define $s_0$ to be the value $1649\sqrt{2}/4 - 583$; see Figures \ref{fig:firstSingularOrbit} and \ref{fig:secondSingularOrbit}.

\begin{figure}
\begin{center}
\includegraphics[scale=0.5]{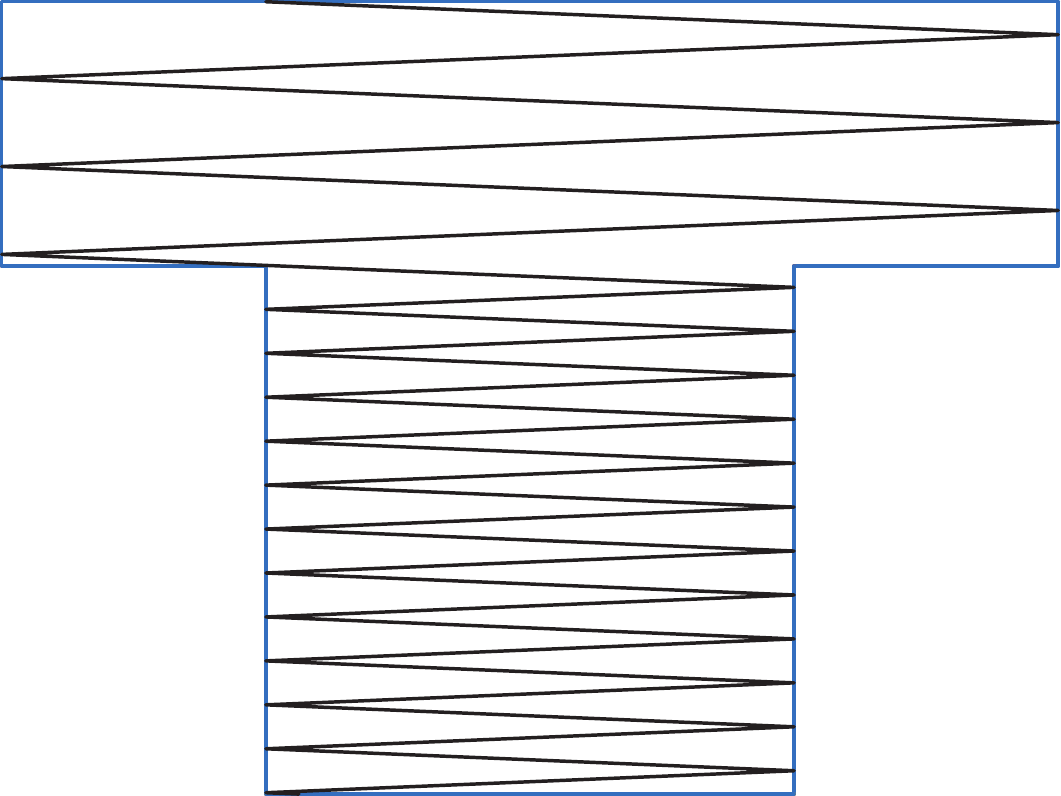}
\end{center}
\caption{Beginning at $(0,\sigma_0)$ in the direction $2\pi - \theta^0$, the billiard ball intersects the bottom of $\omegati{0}$ at the point $\xii{0}{\zeta_0}$.}
\label{fig:firstSingularOrbit}
\end{figure}

\begin{figure}
\begin{center}
\includegraphics[scale=0.5]{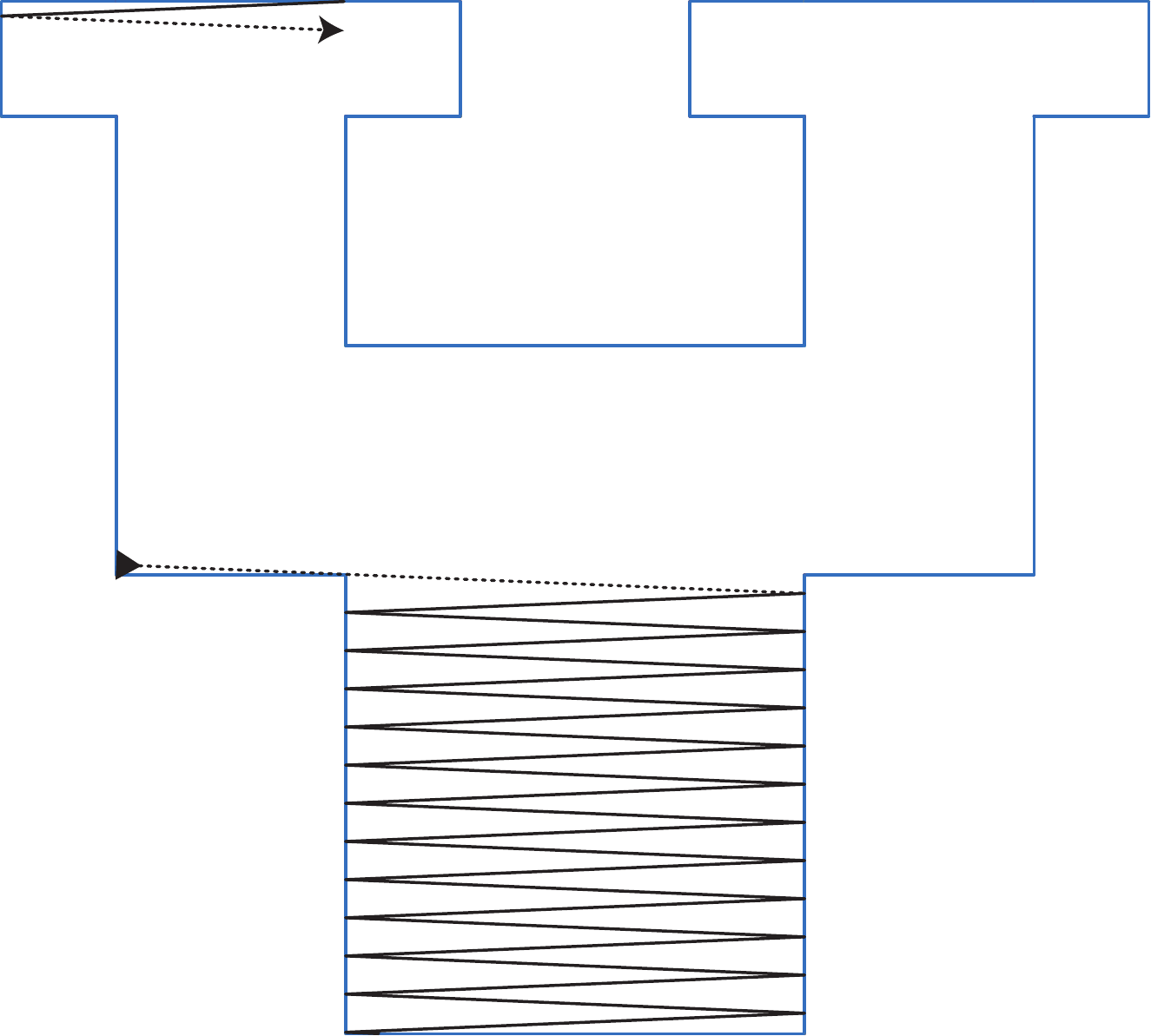}
\end{center}
\caption{Beginning at $(0,\sigma_1)$ in the direction $\pi + \theta^0$, the billiard ball intersects the bottom of $\omegati{1}$ at the point $\xii{1}{\zeta_1}$.}
\label{fig:secondSingularOrbit}
\end{figure}

\begin{figure}
\begin{center}
\includegraphics[scale=.85]{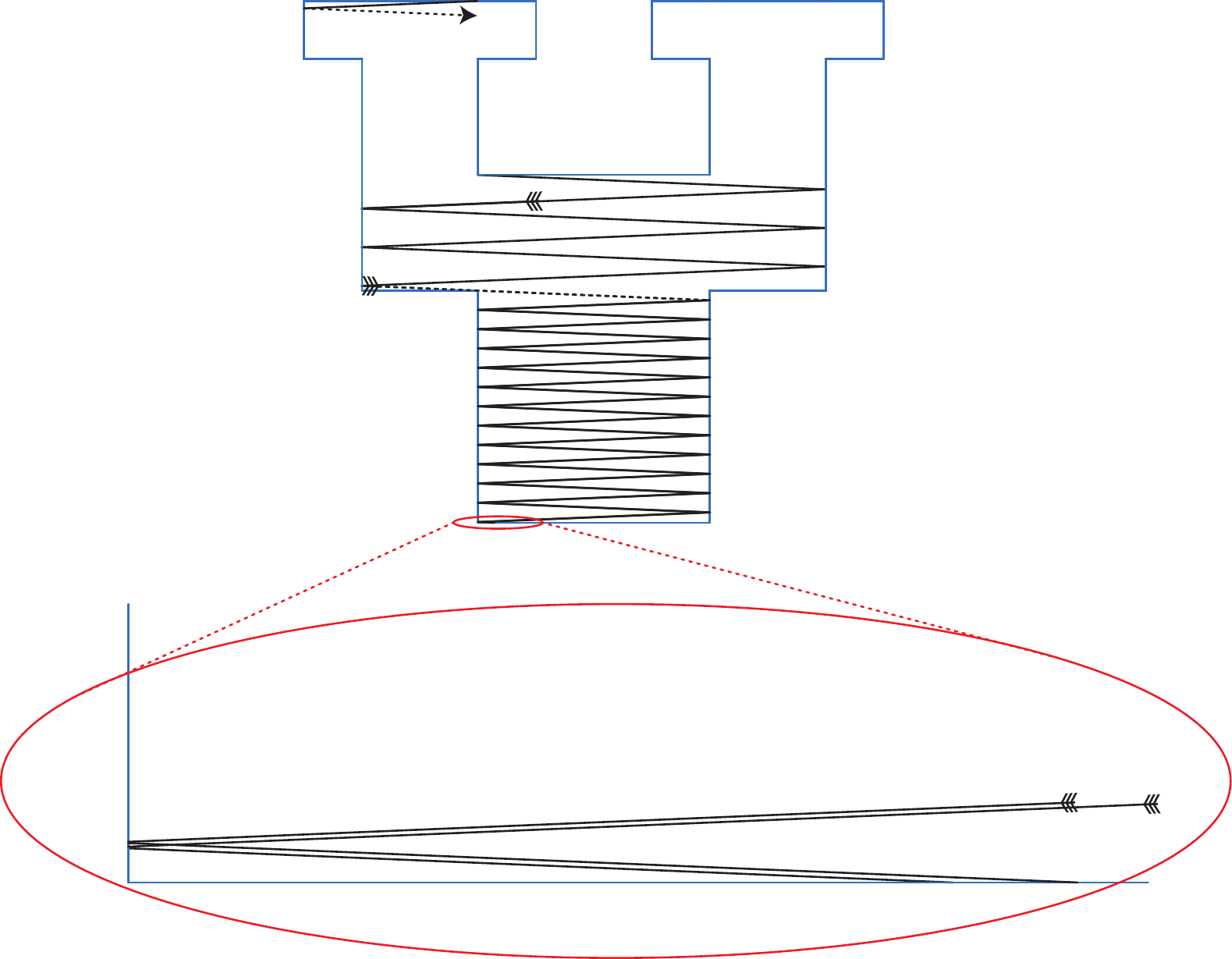}
\end{center}
\caption{Here, we have embedded into $\omegati{1}$ the orbit shown in Figure \ref{fig:firstSingularOrbit}. Moreover, $\xii{0}{\zeta_0}$ is shown to the left of $\xii{1}{\zeta_1}$, as is expressed in Equation (\ref{eqn:xnzetan}).  The dotted segment beginning at the top of $\omegati{1}$ and going in the direction of $\pi +\theta^0$  continues at the other dotted line, which is very close to the path given by the embedded orbit.  The full orbit was not shown, because it would obscure the important aspects of the two orbits.  The image on the bottom is meant to illustrate just how close together $\xii{0}{\zeta_1}$ and $\xii{1}{\zeta_1}$ are, and, consequently, how close together the paths given by the orbits are.}
\label{fig:firstAndSecondSingularOrbitsSuperimposed}
\end{figure}

Consider a sequence of initial conditions $\{(\xio{n},\theta^0_n)\}_{n=0}^\infty$, where $\xio{n} = (0,\sigma_n)$ (the top of $\omegati{n}$ with $x$-coordinate $0$) and
\begin{align}
\theta^0_n &= \left\{\begin{array}{ll}
2\pi - \theta^0,  & n = 2k \\
\pi + \theta^0, & n = 2k+1
\end{array}\right.
\end{align}
\noindent for all nonnegative integers $k$.  Observe that this is \textit{not a sequence of compatible} initial conditions.  

We assume that the basepoint  $\xii{n}{\zeta_n}$ of the orbit $\orbitix{n}{\xio{n}}$ lying on the base of $\omegati{n}$ is given by
\begin{align}
\label{eqn:xnzetan} \xii{n}{\zeta_n} &=\xii{0}{\zeta_0}+ \sum_{i=0}^n s_02^{-n}
\end{align}
\noindent for every $n\geq 1$.  We see that this assumption coincides with our exact arithmetic simulations\footnote{Recall that these computations are being performed using the computer algebra system built into Matlab; see the text of footnote \ref{ftnote:CASComment} for further elaboration.} of the following orbits: 
\begin{align}
\notag &\orbitixang{1}{(0,\sigma_1)}{\pi + \theta^0}\\
\notag &\orbitixang{2}{(0,\sigma_2)}{2\pi - \theta^0}\\
\notag & ...\\
\notag & \orbitixang{5}{(0,\sigma_5)}{\pi + \theta^0}.
\end{align}
 Moreover, 
\begin{align}
\notag \lim_{n\to\infty}\xii{n}{\zeta_n} &= \xii{0}{\zeta_0}+ \sum_{i=0}^\infty s_02^{-i}\\
                                                    &=\xii{0}{\zeta_0}+ 2s_0.
\end{align}
We define $y^0 := \xii{0}{\zeta_0}+2s_0$.

We now consider the sequence of compatible orbits $\seqixang{0}{y^0}{\pi -\theta^0}$.\footnote{In what follows, the notation $\orbiti{k}$ will be used interchangeably to refer to the orbit in its native approximation and also, to its embedding into subsequent approximations $\omegati{n}$, $n>k$, of the T-fractal billiard table.}  It can be shown through symbolic computation that 1) each (forward) orbit is singular and 2) a particular subset of the path of the orbit $\orbitixang{1}{y^0}{\pi -\theta^0}$ can be scaled by $2^{-1}$, reflected and translated so as to recover the part of $\orbitixang{2}{y^0}{\pi - \theta^0}$ missing from $\orbitixang{1}{y^0}{\pi -\theta^0}_{\tau_1}$, when embedded in $\omegati{2}$.  Since the orbit $\orbitixang{1}{y^0}{\pi - \theta^0}$ eventually intersects a corner of $\omegati{1}$ for some $k_n>\tau_n$, the following detailed explanation of how one scales and appends part of an orbit in order to determine the successive orbits can be used to justify why each orbit is in fact a singular orbit of its respective approximation. 

Let $\xio{1} = \xii{0}{\tau_0}$, where $\xii{0}{\tau_0}$ is the point at which $\orbitixang{0}{y^0}{\pi -\theta^0}$ escapes to $\omegati{1}$.  Consider the orbit $\orbitiang{1}{\pi-\theta^0}$.  Then, via our exact arithmetic simulations, we see that $\xii{1}{\tau_1} = -\xii{0}{\tau_0}/2$ and $\theta^{\tau_1} = \theta^0$.  Hence, one can continue copying, reflecting and appending the said portion of the orbit $\orbitixang{1}{y^0}{\pi -\theta^0}$ as a means for determining the orbit $\orbitixang{n}{y^0}{\pi -\theta^0}$, for every $n\geq 2$; see Figure \ref{fig:explainingIrrationalOrbitConstruction}.  

Moreover, each orbit $\orbitixang{n}{y^0}{\pi -\theta^0}$ intersects a corner of $\omegati{n}$ after intersecting with a segment that would be removed in the construction of $\omegati{n+1}$.  While such orbits would be dense in backward time, we are only interested here in determining a nontrivial path.  Hence, $\seqixang{0}{y^0}{\pi - \theta^0}$ is a sequence of compatible singular orbits yielding a nontrivial path.  This nontrivial path then constitutes a singular orbit of $\omegat$, in the sense of Definition \ref{def:ASingularOrbit}.

\begin{figure}
\begin{center}
\begin{subfigure}[b]{0.3\textwidth}
\includegraphics[scale=.22]{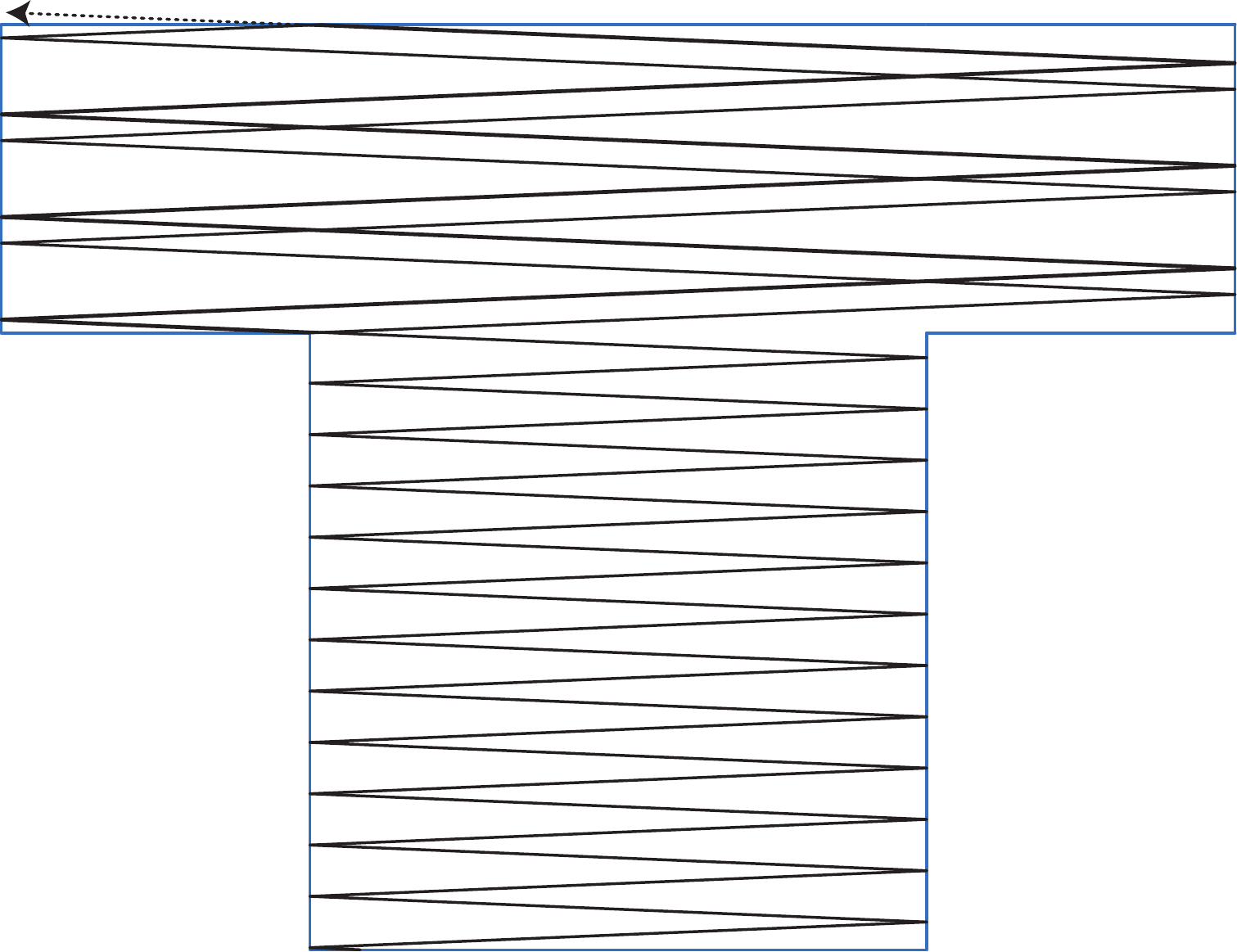}
\caption{}
\label{subfig:a}
\end{subfigure}
\begin{subfigure}[b]{0.3\textwidth}
\includegraphics[scale=.25]{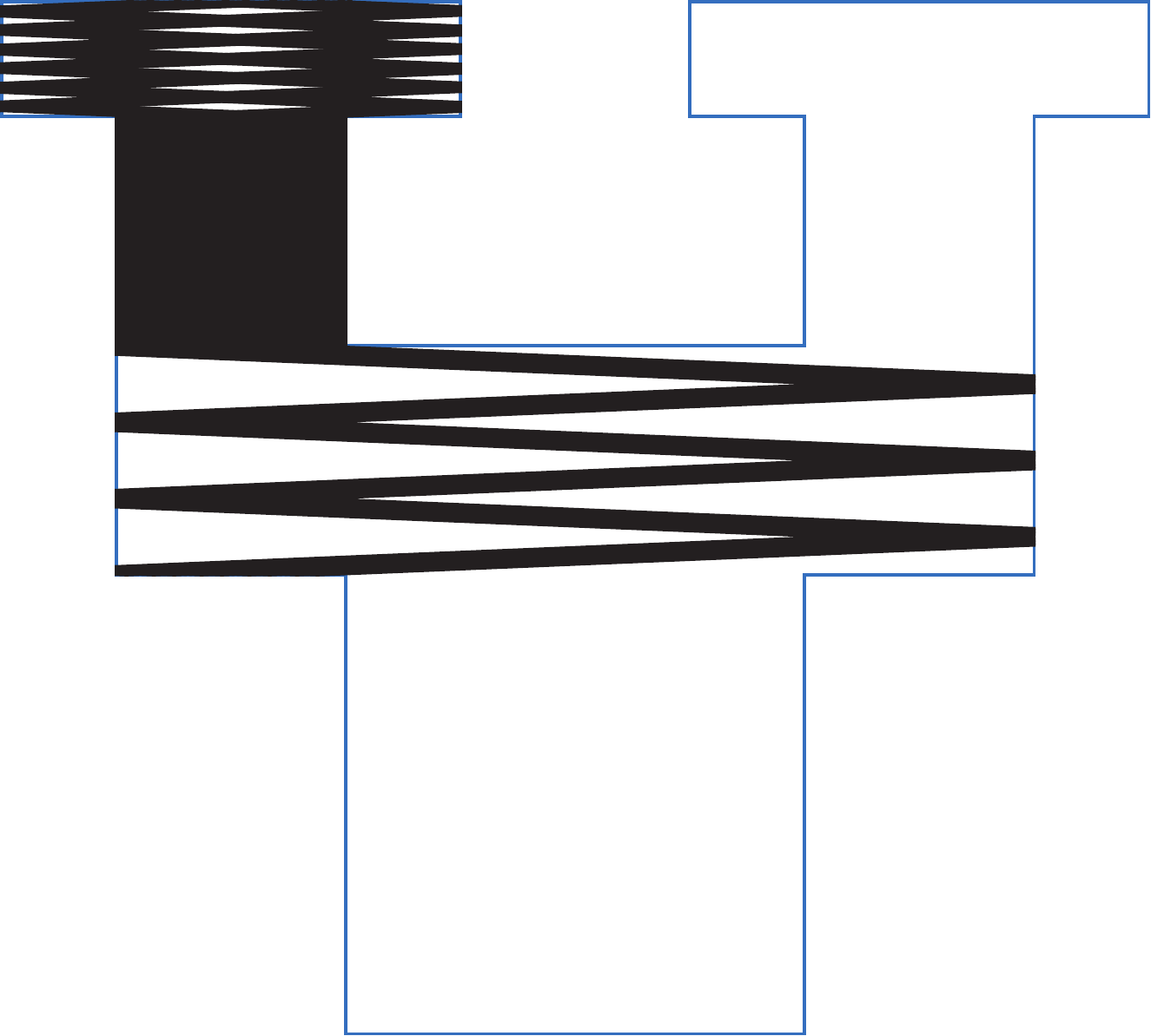}
\caption{}
\label{subfig:b}
\end{subfigure}
\begin{subfigure}[b]{0.3\textwidth}
\includegraphics[scale=.25]{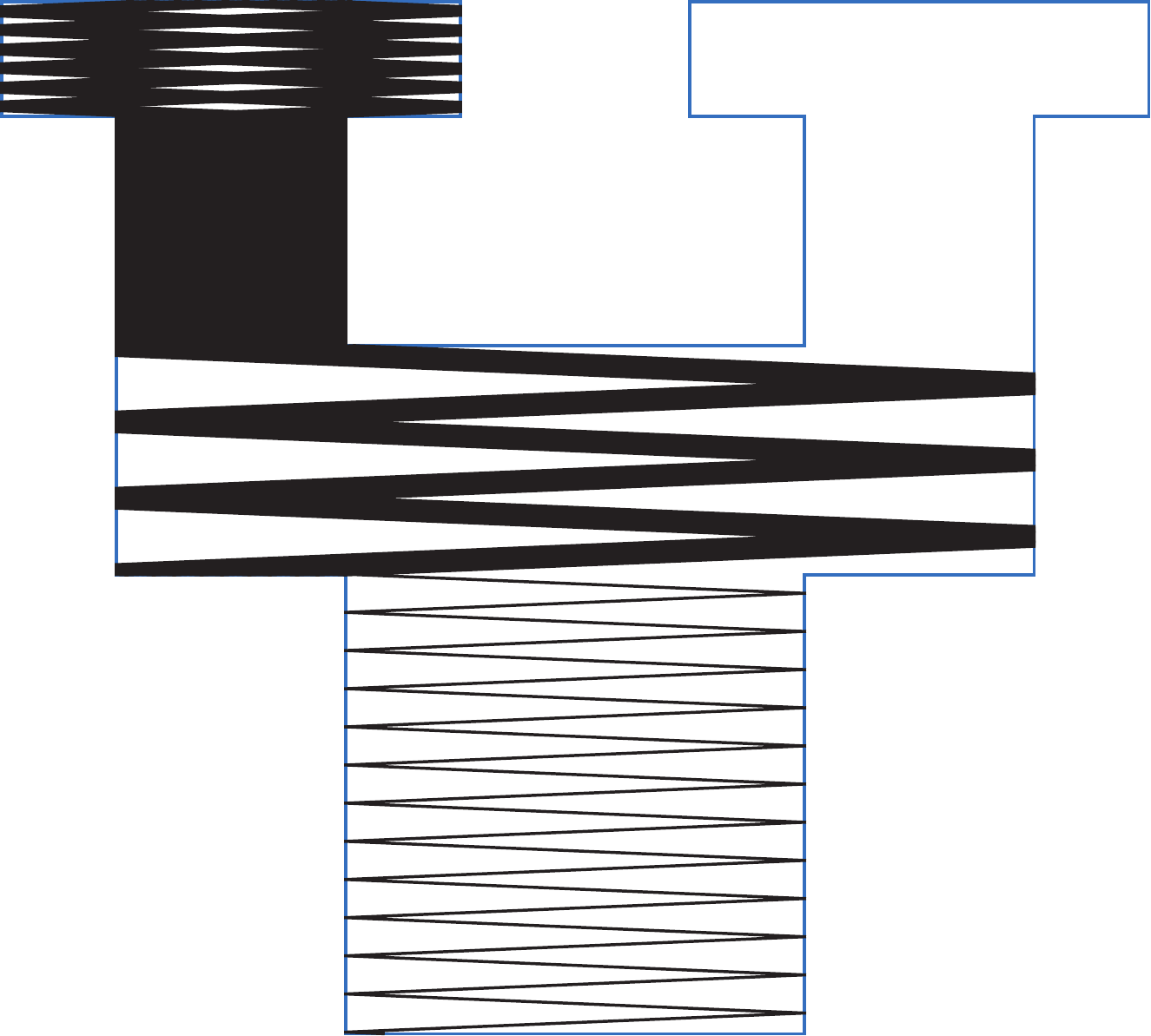}
\caption{}
\end{subfigure}
\end{center}
\caption{The figure on the left stops at the segment to be removed in the construction of $\omegati{1}$ from $\omegati{0}$. The figure in the middle precisely illustrates what happens to the billiard ball  upon entering $\omegati{1}$ from $\omegati{0}$.  The last figure is then the union of the  paths shown in Figure \ref{subfig:a} and Figure \ref{subfig:b}.  The orbit shown in the center may be scaled by $2^{-n}$, appropriately reflected and appended to $\xii{n}{\tau_n}$, $n\geq 2$ in order to produce the portion of the next orbit.}
\label{fig:explainingIrrationalOrbitConstruction}
\end{figure}

\section{Discussion}
\label{sec:Discussion}

We have begun to thoroughly understand what constitutes a periodic orbit of the $T$-fractal billiard $\omegat$.  Eventually constant sequences of periodic orbits have trivial limits constituting periodic orbits of $\omegat$.  Other, more complicated, examples involved showing that particular sequences of compatible periodic orbits yielded limiting curves that constitute periodic orbits of $\omegat$ in the form of the unions of two nontrivial paths.  More precisely, a sequence of compatible periodic orbits coming from a particular family of sequences of compatible periodic orbits could be shown to converge with respect to the Hausdorff metric to a well-defined path given by the union of two nontrivial paths derived from the sequence of compatible periodic orbits.

We wish to determine in future works whether or not every periodic orbit is either the trivial limit of an eventually constant sequence of compatible periodic orbits or the Hausdorff(--Gromov) limit of a sequence of compatible periodic orbits.  

Given our example discussed in \S\ref{sec:aNontrivialPathInAnIrrationalDirection}, we suspect that a classification of orbits on the $T$-fractal billiard will not be as straightforward as it is for square-tiled billiard tables \cite{GutJu1,GutJu2} (namely, that in a fixed direction, the billiard flow is either closed or uniquely ergodic).  Regarding the example in \S\ref{sec:aNontrivialPathInAnIrrationalDirection}, we seek to understand what happens for the billiard orbit beginning from $y^0 :=x^{\zeta_0}_0+2s_0$ in the direction of $\theta^0$ such that $\tan\theta^0 = \sqrt{2}/34$.  That is, we want to know whether or not such an orbit will also form a nontrivial path and converge to a rational elusive point.  If this is the case, then this would be an example of a sequence of compatible dense orbits yielding a nontrivial path reaching a rational elusive point.  A further question we could then ask is whether or not the union of the two nontrivial paths constitutes a periodic orbit of the $T$-fractal billiard, in the sense of Definition \ref{def:periodicOrbit}.  If this turns out to indeed be the case, then this would constitute a periodic orbit of the $T$-fractal billiard in an irrational direction, in sharp contrast with the classic results for the aforementioned square-tiled billiard tables.

\subsubsection*{Acknowledgments} The authors are very grateful to the anonymous referee for his or her  very helpful suggestions and comments, well beyond the call of duty.


\begin{thebibliography}{alpha}

\bibitem[AcST]{AcST} Achdou, Y. Sabot, C., Tchou, N.: Diffusion and propagation problems in some ramified domains with a fractal boundary, \textit{M2AN Math. Model. Numer. Anal.} No. 4, \textbf{40} (2006), 623--652.





\bibitem[CheNie1]{CheNie1} Chen, J. P., Niemeyer, R. G.: Periodic billiard orbits of self-similar Sierpinski carpets,  \textit{J. Math. Anal. Appl.} \textbf{416} (2014), 969--994.  

















\bibitem[GaStVo]{GaStVo} Galperin, G., B., Stepin, A. M.,  Vorobets, Ya.: Periodic billiard trajectories in polygons, \textit{Russian Math. Surveys} No. 3, \textbf{47} (1992), 5--80.




\bibitem[Gut1]{Gut1} Gutkin, E.: Billiards on almost integrable polyhedral surfaces, \textit{Erg. Th. and Dyn. Syst.} \textbf{4} (1984), 569--584.

\bibitem[GutJu1]{GutJu1} Gutkin, E., Judge, C.: The geometry and arithmetic of flat surfaces with applications to polygonal billiards, \textit{Math. Res. Lett.} \textbf{3} (1996), 391--403.

\bibitem[GutJu2]{GutJu2} Gutkin, E., Judge, C.: Affine mappings of flat surfaces: Geometry and arithmetic, \textit{Duke Math. J.} \textbf{103} (2000), 191--213.






\bibitem[Hoo]{Hoo} Hooper, W. P.: The invariant measures of some infinite interval exchange maps, \textit{Geometry and Topology} \textbf{19} (2015), 1895--2038.

\bibitem[HuLeTr]{HuLeTr} Hubert, P., Lelievre, S., Troubetzkoy, S.: The Ehrenfest wind-tree model: periodic directions, recurrence, diffusion, \textit{Journal f\"ur die Reine und Angewandte Mathematik} \textbf{656} (2011), 223--244.

\bibitem[HuSc]{HuSc} Hubert, P., Schmidt, T.: An introduction to Veech surfaces, in: \textit{Handbook of Dynamical Systems}, vol. 1B (A. Katok and B. Hasselblatt, eds.), Elsevier, Amsterdam, 2006, pp. 501--526.






\bibitem[JohNie1]{JohNie1}Johnson, C. C., Niemeyer, R. G.: An interval exchange transformation on a fractal flat surface, in progress, 2016.

\bibitem[JohNie2]{JohNie2} Johnson, C. C., Niemeyer, R. G.: A computational investigation of a particular nontrivial path, in progress, 2016.

\bibitem[LapNie1]{LapNie1} Lapidus, M. L., Niemeyer, R. G.: Towards the Koch snowflake fractal billiard---Computer experiments and mathematical conjectures, in: \textit{Gems in Experimental Mathematics} (T. Amdeberhan, L. A. Medina and V. H. Moll, eds.), Contemporary Mathematics, Amer. Math. Soc., Providence, RI, \textbf{517} (2010), pp. 231--263. [E-print: arXiv:math.DS.0912.3948v1, 2009.]

\bibitem[LapNie2]{LapNie2} Lapidus, M. L., Niemeyer, R. G.: Families of periodic orbits of the Koch snowflake fractal billiard, 63 pages, e-print, arXiv:math.DS.1105.0737v1, 2011.


\bibitem[LapNie3]{LapNie3} Lapidus, M. L., Niemeyer, R. G.: Sequences of compatible periodic hybrid orbits of prefractal Koch snowflake billiards, \textit{Discrete and Continuous Dynamical Systems -- Ser. A}, No. 8, \textbf{33} (2013), pp. 3719--3740. [E-print: IHES/M/2/16, 2012; arXiv:math.DS.1204.3133v1, 2012.]

\bibitem[LapNie4]{LapNie4} Lapidus, M. L., Niemeyer, R. G.: The current state of fractal billiards, in: \textit{Fractal Geometry and Dynamical
Systems in Pure and Applied Mathematics II: Fractals in Applied Mathematics} (D. Carf\`i, M. L. Lapidus, M. van Frankenhuijsen and E. P. J. Pearse, eds.), Contemporary Mathematics, Amer. Math. Soc., Providence, RI, \textbf{601} (2013), pp. 251--288. [E-print: arXiv:math.DS.1210.0282v2, 2013.]


\bibitem[LapNie5]{LapNie5} Lapidus, M. L., Niemeyer, R. G.: Veech groups $\Gamma_n$ of the Koch snowflake prefractal translation surfaces $\mathcal{S}(KS_n)$, in progress, 2016.


















\bibitem[MasTa]{MasTa} Masur, H., Tabachnikov, S.: Rational billiards and flat structures, in: \textit{Handbook of Dynamical Systems}, vol. 1A (A. Katok and B. Hasselblatt, eds.), Elsevier, Amsterdam, 2002, pp. 1015--1090.










\bibitem[Sm]{Sm} Smillie, J.: Dynamics of billiard flow in rational polygons, in: \textit{Dynamical Systems}, Encyclopedia of Math. Sciences, vol. 100, Math. Physics 1 (Ya. G. Sinai, ed.), Springer-Verlag, New York, 2000, pp. 360--382.

\bibitem[Ta]{Ta} Tabachnikov, S.: \textit{Billiards}, Panoramas et Synth\`eses, Soc. Math. France, Paris, 1995.


    


\bibitem[Ve1]{Ve1} Veech, W. A.: The billiard in a regular polygon, \textit{Geom. Funct. Anal.} \textbf{2} (1992), 341--379.

\bibitem[Ve2]{Ve2} Veech, W. A.: Flat surfaces, \textit{Amer. J. Math.} \textbf{115} (1993), 589--689.


\bibitem[Ve3]{Ve3} Veech, W.: Teichm\"{u}ller curves in modular space, Eisenstein series, and an application to triangular billiards, \textit{Invent. Math.} \textbf{97} (1989), 553--583.

\bibitem[Vo]{Vo} Vorobets, Ya. B.: Plane structures and billiards in rational polygons: The Veech alternative, \textit{Russian Math. Surveys} \textbf{51} (1996), 779--817.

\bibitem[We-Sc]{We-Sc} Weitze-Schmith\"usen, G.: An algorithm for finding the Veech group of an origami, \textit{Experimental Mathematics} No. 4, \textbf{13} (2004), 459--472.


\bibitem[Zo]{Zo} Zorich, A.: Flat surfaces, in: \textit{Frontiers in Number Theory, Physics and Geometry} I (P. Cartier, \textit{et al}., eds.), Springer-Verlag, Berlin, 2002, pp. 439--585.
\end{thebibliography}
\end{document}